\title{Dynamics of iwip automorphisms of free products}
\author{Ioannis Papavasileiou \and Dionysios Syrigos}
\newtheorem{lemma}{Lemma}[section]
\newtheorem{proposition}{Proposition}[section]
\newtheorem{corollary}{Corollary}[section]
\theoremstyle{definition}
\newtheorem{remark}{Remark}[section] 
\newtheorem{definition}{Definition}[section]
\newtheorem*{sublemma*}{Lemma}
\newtheorem{theorem}{Theorem}[section]
\newenvironment{mythm}[1]
  {\innercustomthm}
  {\endinnercustomthm}
\newenvironment{mycol}[1]
  {\innercustomth}
  {\endinnercustomth}
\begin{document}
\newpage
\maketitle
 
\begin{abstract}
Let $G$ be a free product and $\mathrm{Out}(G)$ the outer automorphism group of $G$. In this article using the theory of laminations we give a criterion for a subgroup $H$ of $\mathrm{Out}(G)$ to contain a nonabelian free subgroup. We also study the centraliser of an iwip element of $\mathrm{Out}(G)$ and the stabiliser of its associated lamination.
\end{abstract}
\setstretch{1.4}

\section{Introduction}
The study of outer automorphisms of free groups and their dynamics goes back in $1992$ when Bestvina and Handel  \cite{BestvinaHandeltraintracks} introduced the notion of relative train track representatives to explore the dynamics of an outer automorphism. Later, Bestvina, Feighn and Handel \cite{Lam} transferred the notion of laminations from the theory of compact surfaces to free groups in order to examine iwip (irreducible with irreducible powers or fully irreducible) outer automorphisms of free groups. Using laminations as their main tool, they proved \cite{Tits1}, \cite{Tits2} that the group of outer automorphisms, $\mathrm{Out}(F_n)$, of a finitely generated free group $F_n$, satisfies the strong Tits alternative. Recently, the celebrated result of Bestvina, Feighn and Handel has been extended to the case of finitely generated free-by-finite groups \cite{SykPap}.\par
Let $G=G_1\ast\ldots\ast G_k\ast F_p$ be a free product decomposition of a group $G$ and let $\mathcal{O}$ be the relative outer space corresponding to this decomposition, defined in \cite{Outspaceprod}. Let, also, $\mathrm{Out}(G,\mathcal{G})$ be the group of outer automorphisms of $G$ that preserve the set of conjugacy classes of the $G_i$'s. In order to study $\mathrm{Out}(G,\mathcal{G})$, the second author \cite{syrigos2016irreducible} extended the notion of stable laminations for iwip elements to the context of free products, whereas Guirardel and Horbez \cite{Guirardel2017AlgebraicLF} developed the theory of algebraic laminations. Additionally, the Tits altenative for automorphisms of free products has been studied by Horbez in \cite{horbez2014tits}.\par
In this paper, using laminations and the ping-pong lemma we obtain our first main result which is a criterion for a subgroup $H\leq \mathrm{Out}(G,\mathcal{G})$ to contain a nonabelian free subgroup and which extends \autocite[Corollary~3.4.3]{Tits1}. More precisely, we prove the following:
  \begin{mythm}{\ref{main1}}
Let $[\phi]\in\mathrm{Out}(G,\mathcal{G})$ be an outer automorphism of exponential growth and let $\Lambda^+\in \mathcal{L}([\phi])$ and  $\Lambda^-\in \mathcal{L}([\phi]^{-1})$ be two paired
and $[\phi]$-invariant laminations. Suppose that $H$ is a subgroup of $\mathrm{Out}(G,\mathcal{G})$ containing $[\phi]$ and that there
is an element $[\psi]\in H$ of exponential growth such that generic lines of the four laminations $[\psi]^{\pm1}(\Lambda^{\pm})$ are weakly attracted to $\Lambda^+$ under the action of $[\phi]$ and to $\Lambda^-$ under the action of $[\phi]^{-1}$. Then $H$ contains a free subgroup of rank two.  
  \end{mythm}
Given a group $G$ it is of great interest to know under what circumstances (suitable powers of) two elements of $G$ should generate a free subgroup in $G$. For example in \autocite[Proposition~3.7]{Lam} it is proved that sufficiently high powers $[\phi]^n,[\psi]^m$ of two iwip outer automorphisms $[\phi],[\psi]$ of $F_n$  that do not
have common powers generate a free subgroup of $\mathrm{Out}(F_n)$ (see \cite{Ghosh}, \cite{KapovichLustigPingpong}, \cite{ClayPettet}, \cite{Gultepe} for similar results). Using our second main result (Theorem \ref{thm7.2}) we get the following generalization, which may be viewed as a Tits alternative for two-generator subgroups of $\mathrm{Out}(G,\mathcal{G})$. In fact, we show that two iwip outer automorphisms $[\phi]$ and $[\psi]$ are either "independent", i.e. they have some iterates generating a free subgroup of rank two, or they are "strongly related", i.e. some iterates of them differ by an element which virtually fixes a point of $\mathcal{O}$. 
\begin{mythm}{\ref{main2}}
 Let $[\phi],[\psi]$ be two iwip outer automorphisms in $\mathrm{Out}(G,\mathcal{G})$. Then we have the following dichotomy:
    \begin{enumerate}
        \item either $\langle [\phi]^m,[\psi]^n\rangle\cong F_2$ for some $m,n\in\mathbb{N}$, or
        \item $[\phi]^k=[\psi]^l\cdot [\alpha]$ for some $k,l\in\mathbb{N}$ and $[\alpha]\in\mathrm{Out}(G,\mathcal{G})$ such that $[\alpha]$ (virtually) fixes an element of $\mathcal{O}$.
    \end{enumerate}
\end{mythm}

In the case of free products $G=G_1\ast\ldots\ast G_k\ast F_p$ where each factor $G_i$ is finite, the stabilisers of elements of $\mathcal{O}$ are finite as well and $\mathrm{Out}(G,\mathcal{G})$ is virtually torsion free. Thus, as a corollary, we obtain the following extension of \autocite[Proposition~3.7]{Lam}.
\begin{mycol}{\ref{maincol}}
 Let $[\phi],[\psi]$ be two iwip outer automorphisms of $G=G_1\ast\ldots\ast G_k\ast F_p$, where each factor $G_i$ is finite, that do not
have common powers. Then high powers of $[\phi]$ and $[\psi]$ freely generate $F_2$.     
\end{mycol}
  
  Bestvina, Feighn and Handel \autocite[Theorem~2.14]{Lam} proved that the stabiliser $\mathrm{Stab}(\Lambda)$ of the stable lamination $\Lambda$ of an iwip outer automorphism of $F_n$, which is the stabiliser of $\Lambda$ under the action of $\mathrm{Out}(F_n)$ on the set of stable laminations, is virtually (infinite)
cyclic. In the free product case  this is not always true (see Remark \ref{gibcent}). However, the
following theorem generalises the free group statement, since it is well known that outer automorphisms of $F_n$ fixing points of the outer space $CV_n$ are exactly the outer automorphisms of finite order. 
  \begin{mythm}{\ref{thm7.2}}
    Let $[\phi]\in\mathrm{Out}(G,\mathcal{G})$ be an iwip automorphism and $\mathrm{Stab}(\Lambda)$ the  stabiliser of the associated stable lamination. Then $\mathrm{Stab}(\Lambda)$ is an infinite cyclic extension of a virtually
elliptic subgroup $A$. In other words, for every element $[\psi]$ of $A$ there is a positive integer $k$ and some element $S$ of $\mathcal{O}$ such that $[\psi]^k(S)=S$.
  \end{mythm}
 For a group $G$ and an element $g\in G$, it is also natural to study the centraliser $C(g)$ of $g$ in $G$. In several classes of groups, centralisers of elements are reasonably well-understood
and very useful in the study of the group. For example, Feighn and Handel in \cite{Feighn2006AbelianSO} classified abelian subgroups of $\mathrm{Out}(F_n)$ by studying centralisers of individual elements.
Moreover, a well known result for iwip automorphisms of $F_n$ states that their centralisers
in $\mathrm{Out}(F_n)$, are virtually cyclic (see \cite{Lam} for the original proof, \cite{LustigConjugacy} and \cite{KapovichLustigStabilizers} for alternative
proofs).  In the case of a free product, it is not always true that the centraliser is virtually
cyclic. However, we obtain the following generalisation, as a corollary of the Theorem \ref{thm7.2}.
\begin{mythm}{\ref{thm7.6}}
 Let $[\phi]\in\mathrm{Out}(G,\mathcal{G})$ be an iwip outer automorphism of $G$. Then there is a normal virtually elliptic
subgroup $B$ of $C([\phi])$ such that $C([\phi])$ is a cyclic extension of $B$.
\end{mythm}
The conclusion of the above theorem can be strengthened by proving that there is a point which is (virtually) fixed by all elements of the subgroup $B$.  More specifically,
we prove that every element of the subgroup $B$ (virtually) fixes the set of points $M_{[\phi]}$ of $\mathcal{O}$ for which
our iwip automorphism $[\phi]$  admits a train track representative.  By combining Theorem \ref{thm7.6} with the main result of \cite{francaviglia2020minimally} and some standard properties of the Lipschitz metric of $\mathcal{O}$ we get the following theorem.
\begin{mythm}{\ref{thm7.8}}
    Let $[\phi]\in\mathrm{Out}(G,\mathcal{G})$ be an iwip outer automorhism. Then $C([\phi])$ has a
normal subgroup $B'$ such that $C([\phi])/B'$ is isomorphic to $\mathbb{Z}$. Moreover, for every $T\in M_{[\phi]}$ and for every $[\psi]\in B'$, there is some $k\in\mathbb{Z}$ such that $[\psi]^k(T)=T$.
\end{mythm}
Here is the organization of the paper. After some preliminaries given in section \ref{prelim}, in section \ref{mainthmsection3} we prove our first main theorem, Theorem \ref{main1}, which is a criterion for a subgroup $H$ of $\mathrm{Out}(G,\mathcal{G})$ to contain a free subgroup of rank two. Along the way, we extend certain useful results of \cite{Tits1} to the case of free products. Proposition \ref{prp2.6bestvina}, presented in section \ref{subgroupssection4}, is a key step in proving our second main theorem, Theorem \ref{thm7.2}. In section \ref{stretchingsection5} we give the definition of the stretching homomorphism $\sigma:\mathrm{Stab}(\Lambda)\to\mathbb{Z}$ and we investigate its kernel which plays an important role in the proof of our second main theorem. Section \ref{secondmainsection7} is devoted to the proof of Theorem \ref{thm7.2} concerning the stabiliser $\mathrm{Stab}(\Lambda)$. Furthermore, we establish results regarding $\mathrm{Stab}(\Lambda)$ in some interesting special cases of free products. We start section \ref{appsection8} with some applications on the relative centriliser $C([\phi])$ of an iwip outer automorphism $[\phi]$ and lastly, we obtain the above mentioned strong dichotomy results, Theorem \ref{main2} and Corollary \ref{maincol}, related to the subgroup generated by sufficiently high powers of two iwip outer automorphisms.  

\section{Preliminaries}\label{prelim}

\subsection{Outer space and automorphisms}
Let $G=G_1\ast\cdots\ast G_k\ast F_p$ be a free product decomposition (not necessarily the Grushko one) of a finitely generated group $G$, where $F_p$ is the free group on $\{x_1,\ldots,x_p\}$. The pair $\mathcal{G}=(\{[G_1],\ldots,[G_k]\},p)$, where $[G_i]$ denotes the conjugacy class of $G_i$, is called a \textbf{free factor system} of $G$. A $G$-tree $T$ is called a \textbf{$\mathcal{G}$-tree} if for each $i\in \{1,\ldots,k\}$ there is exactly one orbit of vertices with stabilizers
conjugate to $G_i$ (all other vertices have trivial stabilisers and we refer to them as \textbf{free vertices}) and  all edge stabilisers are trivial. The \textbf{relative outer space} corresponding to $\mathcal{G}$, denoted by $\mathcal{O}=\mathcal{O}(\mathcal{G})$, is defined to be the set of equivalence classes of minimal,
simplicial, metric $\mathcal{G}$-trees with no redundant vertices where the equivalent relation is given by $G$-equivariant isometries (we refer to \cite{Outspaceprod} or \cite{francaviglia2015stretching} for a detailed definition). In order to refer to a point of $\mathcal{O}$ we shall write $T\in \mathcal{O}$ instead of its class $[T]\in \mathcal{O}$. It is well known that $\mathcal{O}$ can be embedded in a Euclidean space $\mathbb{R}^n$ and thus inherits its topology. In particular, every $T\in \mathcal{O}$ determines an (open) simplex $\Delta(T)\subset\mathcal{O}$ which is the set of points of $\mathcal{O}$ obtained from $T$ by changing the lengths of (orbits of) edges in such
a way so that every edge has positive length and the sum of lengths of edges in the quotient graph $T/G$ is equal to $1$. \par
We denote by $\mathrm{Aut}(G,\mathcal{G})$ the group of automorphisms of $G$ that preserve the conjugacy classes of the free factors $G_i$, $i=1,\ldots,k$, i.e. $\phi\in\mathrm{Aut}(G,\mathcal{G})$ if for every $i\in \{1,\ldots,k\}$ there is some $j\in \{1,\ldots,k\}$ such that $\phi([G_i])=[G_j]$. It is clear that the group of inner automorphisms $\mathrm{Inn}(G)$ is a normal subgroup of $\mathrm{Aut}(G,\mathcal{G})$, hence we can form the quotient $\mathrm{Out}(G,\mathcal{G})= \mathrm{Aut}(G,\mathcal{G})/\mathrm{Inn}(G)$. Note that in case where $\mathcal{G}$ corresponds to the Grushko decomposition of $G$, then $\mathrm{Out}(G,\mathcal{G})=\mathrm{Out}(G)$.
\begin{definition}
    Let $T,T'\in\mathcal{O}$. A map $\Tilde{f}:T\to T'$ is called \textbf{$\mathcal{O}$-map} if it is  Lipschitz continuous and $G$-equivariant. An $\mathcal{O}$-map that permutes the orbits of edges is called \textbf{$\mathcal{O}$-permutation}.
\end{definition}

\begin{definition}
   Let $T\in\mathcal{O}$ and $p,q$ two segments of $T$ with the same initial vertex. If $p,q^{-1}$ belong
to distinct orbits, then we say that we have the \textbf{fold} (or folding) $\tau:T\to T'$ that is the map that $G$-equivariantly identifies $p$ with $q$. If it happens to be $T'\in\mathcal{O}$, then we say that $\tau$ is an \textbf{$\mathcal{O}$-fold} (or $\mathcal{O}$-folding). 
\end{definition}

 \begin{definition}
      Let $T,T'\in\mathcal{O}$. An $\mathcal{O}$-map $\Tilde{f}:T\to T'$ is called \textbf{PL}, if it is piecewise linear.
 \end{definition}
From now on we will assume that every PL map is linear on edges.

\subsection{Topological representatives}

   Let $T$ be a tree in $\mathcal{O}$. A map $\Tilde{f}:T\to T$ is called a \textbf{topological representative} of $\phi\in \mathrm{Aut}(G,\mathcal{G})$ if it is $\phi$-equivariant, i.e. $\Tilde{f}(g\cdot \widetilde{x})=\phi(g)\cdot \Tilde{f}(\widetilde{x})$ for every $g\in G,\ \widetilde{x}\in T$. A topological representative of $[\phi]\in\mathrm{Out}(G,\mathcal{G})$ is a topological representative of some $\psi\in [\phi]$. A \textbf{filtration} for the topological representative $\Tilde{f}$ is an increasing sequence
    \[
    \emptyset =T_0\subseteq T_1\subseteq \ldots \subseteq T_m=T
    \]
    of $\Tilde{f}$-invariant $G$-subgraphs
of the tree $T$. The $G$-set of the edges $T_i\setminus T_{i-1}$ is called the \textbf{$i$-stratum} and is denoted by $H_i$. The \textbf{transition matrix} $M(\Tilde{f})$ associated to $\Tilde{f}$ is the $n\times n$ matrix whose $(i,j)$-entry is the number of times the path $\Tilde{f}(\widetilde{e}_i)$ crosses the
orbit $G\cdot \widetilde{e}_j$ of $\widetilde{e}_j$, regardless of orientation. The inner automorphism $\tau_g\in\mathrm{Inn}(G)$, given by $\tau_g(x)=gxg^{-1},\ x\in G$, is represented by the isometry $i_g:t\mapsto gt$. Therefore, if $\phi'=\tau_g\circ \phi$ is another automorphism in the outer class $[\phi]$ of $\phi$, then the composition $\Tilde{f}'=i_g\circ \Tilde{f}$ represents $\phi'$ and $\Tilde{f}$, $\Tilde{f}'$ have the same transition matrix. Thus, the
choice of a particular automorphism $\phi'\in [\phi]$ will have no effect on what
follows. We also write $M_i(\Tilde{f})$ for the submatrix of $M(\Tilde{f})$ corresponding to the stratum $H_i$. If $M_i(\Tilde{f})$ is an irreducible matrix, then the stratum $H_i$ is called \textbf{irreducible}. Every irreducible matrix $M_i(\Tilde{f})$ has a \textbf{Perron–Frobenius eigenvalue} $\lambda_i\ge1$. In the case where $\lambda_i>1$, we say that $H_i$ is an \textbf{exponentially growing stratum}. Otherwise, $M_i(\Tilde{f})$ is a transitive permutation matrix. A nonnegative integral matrix $M$ is called \textbf{aperiodic} if there is some positive power $k$ such that $M^k$  has all its entries positive. If $H_r$ is an exponentially growing stratum, then we say that $H_r$ is \textbf{aperiodic} if its transition matrix is aperiodic. The topological representative $\Tilde{f}$ is called \textbf{eg-aperiodic} if each exponentially growing stratum is aperiodic. For a $G$-invariant subgraph $W\subseteq T$ we denote by $\mathcal{F}(W)$, or $\mathcal{F}(W/G)$, the set which consists of the conjugacy classes of $\pi_1(W_i/G)$ where the $W_i$'s are the connected components of $W$. A filtration as above will be called \textbf{reduced} with respect to $[\phi]$ if whenever a free factor system $\mathcal{G}'$ is $[\phi]^k$-invariant for some $k>0$ and $\mathcal{F}(T_i)\leq \mathcal{G}' \leq \mathcal{F}(T_{i+1})$ (where $\leq$ is the usual partial order on free factor systems),  then either $\mathcal{G}'=\mathcal{F}(T_i)$ or $\mathcal{G}'=\mathcal{F}(T_{i+1})$.

Irreducible automorphisms play a central role in the study of automorphisms of finitely
generated free groups. The notion of (relative) irreducibility can be defined in two equivalent ways. The first definition uses topological representatives.

\begin{definition}
    Let $[\phi]\in\mathrm{Out}(G,\mathcal{G})$ be an outer automorphism of $G=G_1\ast\ldots\ast G_k\ast F_p$ and $\Tilde{f}:T\to T$ a topological representative of $[\phi]$. We call $\Tilde{f}$ \textbf{irreducible} if for every proper, $\Tilde{f}$-invariant, $G$-invariant subgraph $W$ of $T$, the quotient graph $W/G$ is a collection of
subtrees each of which contains at most one non-free vertex. We say that $[\phi]$ is \textbf{irreducible} if every representative $\Tilde{f}:T\to T$ of $[\phi]$ is irreducible. An outer automorphism $[\phi]$ (resp. topological representative $\Tilde{f}$) is called \textbf{reducible} if it is not irreducible. An outer automorphism $[\phi]$ (resp. topological representative $\Tilde{f}$) is called \textbf{irreducible with irreducible powers} (iwip) or fully irreducible if for every positive integer $k$ the outer automorphism $[\phi]^k$ (resp. the topological representative $\Tilde{f}^k$) is irreducible.
\end{definition}

The second, equivalent, definition uses free factor systems (see \cite{francaviglia2015stretching} for a detailed exposition).
\begin{definition}
    Let $[\phi]$ be an outer automorphism of $G$ and $\mathcal{G}=(\{[G_1],\ldots,[G_k]\},r)$  a $[\phi]$-invariant free factor system of $G$, i.e. for every $i\in \{1,\ldots,k\}$ there is some $j\in \{1,\ldots,k\}$ such that $[\phi]([G_i])=[G_j]$. We say that $[\phi]$ is \textbf{irreducible}  relative to $\mathcal{G}$ if $\mathcal{G}$ is a maximal, proper, $[\phi]$-invariant free factor system.
\end{definition}
\begin{definition}
    A \textbf{pre-train track structure} on a $G$-tree $T$ is a $G$-invariant equivalence  relation on the set of germs of edges at each vertex of $T$. Equivalence
classes of germs, are called \textbf{gates}. A \textbf{train track} structure on a $G$-tree $T$ is  a pre-train track structure with at least
two gates at every vertex. A \textbf{turn} is a pair (of germs) of edges emanating at the same vertex. A \textbf{legal turn}
is a turn for which the two germs belong to different equivalent classes. A \textbf{legal path}, is a path that contains only legal turns.
\end{definition}

\begin{definition}
 A topological representative $\Tilde{f}:T\to T$ of $[\phi]$ is called a \textbf{train track representative} of $[\phi]$ if it is a PL-map and there is a  train track
structure on $T$ such that:
\begin{itemize}
    \item $\Tilde{f}$ maps edges to legal paths (in particular, $\Tilde{f}$ does not collapse edges), and
    \item if $\Tilde{f}(\Tilde{v})$  is a vertex, then $\Tilde{f}$ maps inequivalent germs at $\Tilde{v}$ to inequivalent germs at $\Tilde{f}(\Tilde{v})$.
\end{itemize}
\end{definition}

For a tree $T$ and $x,y\in T$ we will denote by $[x,y]$ the geodesic in $T$ from $x$ to $y$.  For a path $p$ in $T$, we will denote by $[p]$ the reduced path with the same
endpoints as $p$. \par
Let $T,T'\in\mathcal{O}$ and $\Tilde{f}:T\to T'$ an $\mathcal{O}$-map. The \textbf{bounded cancellation constant} of $\Tilde{f}$, $\mathrm{BCC}(\Tilde{f})$, is defined to be the supremum of all real numbers $\ell$ with the following property: there are three points $x,y,z\in T$, where $y$ is in the interior of the
unique reduced path $[x,z]$  such that $d_{T'}(\Tilde{f}(y),[\Tilde{f}(x),\Tilde{f}(z)])=\ell$. It is well known that $\mathrm{BCC}(\Tilde{f})$ is finite and depends only on $\Tilde{f}$.

\begin{lemma}[\cite{Goldstein},\cite{Cooper}]
    Let $T,T\in \mathcal{O}$ and $\Tilde{f}:T\to T'$ be a  Lipschitz map. Then
    \[
    \mathrm{BCC}(\Tilde{f})\leq \mathrm{Lip}(\Tilde{f})\cdot \mathrm{vol}(T/G)
    \]
    where $\mathrm{vol}(T/G)$ denotes the sum of lengths of edges of the quotient graph $T/G$.
\end{lemma}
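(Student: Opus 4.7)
The plan is to analyse the tripod in $T'$ formed by the images of a triple $x,y,z \in T$ realising (or approximating) the cancellation $\ell$. So I would fix $x,y,z \in T$ with $y$ in the interior of $[x,z]$, let $c$ denote the centre of the tripod with vertices $\Tilde{f}(x), \Tilde{f}(y), \Tilde{f}(z)$ in $T'$, and note that $\ell = d_{T'}(\Tilde{f}(y), c)$. Since $\Tilde{f}(x)$ and $\Tilde{f}(y)$ lie in distinct components of $T' \setminus \{c\}$, the continuous path $\Tilde{f}([x,y])$ must meet $c$; let $a \in [x,y]$ be the point closest to $y$ with $\Tilde{f}(a) = c$. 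By the minimality of $a$, the image $\Tilde{f}([a,y])$ is contained in the closed component $\overline{B}$ of $T' \setminus \{c\}$ containing $\Tilde{f}(y)$, and as a path from $c$ to $\Tilde{f}(y)$ inside the subtree $\overline{B}$ its length is at least $d_{T'}(c, \Tilde{f}(y)) = \ell$. The Lipschitz hypothesis then yields
$$
\ell \;\le\; \mathrm{length}\bigl(\Tilde{f}([a,y])\bigr) \;\le\; \mathrm{Lip}(\Tilde{f}) \cdot d_T(a,y).
$$

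It therefore suffices to establish the geometric bound $d_T(a,y) \le \mathrm{vol}(T/G)$, which I would prove by a pigeonhole argument in the finite-volume quotient. Since the quotient map $\pi \colon T \to T/G$ is a local isometry, projecting the arc-length parametrisation $\gamma \colon [0, d_T(a,y)] \to T$ of the geodesic $[a,y]$ yields an immersed path in $T/G$ of the same length. If $d_T(a,y) > \mathrm{vol}(T/G)$, a standard pigeonhole forces some point of $T/G$ to be covered at least twice; lifting, this produces distinct parameters $s_1 < s_2$ and a non-trivial $g \in G$ with $g \cdot \gamma(s_1) = \gamma(s_2)$.

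Using the $G$-equivariance of $\Tilde{f}$ (implicit in the intended applications, where $\Tilde{f}$ is taken to be an $\mathcal{O}$-map), one then obtains $\Tilde{f}(\gamma(s_2)) = g \cdot \Tilde{f}(\gamma(s_1))$, with both images lying in the closed branch $\overline{B}$, so that $g\overline{B} \cap \overline{B} \ne \emptyset$. Combined with the triviality of edge stabilisers in $T'$ and the fact that vertex stabilisers of $T'$ are conjugates of the $G_i$, a short case analysis on whether $g$ fixes $c$ in $T'$ (in which case $g$ must preserve the branch $\overline{B}$ and permute the germs of edges at $c$) or acts hyperbolically on $T'$ shows that one can extract a strictly shorter subsegment $[a',y] \subsetneq [a,y]$ with $\Tilde{f}(a') = c$ and $\Tilde{f}([a',y]) \subset \overline{B}$, contradicting the minimality of $a$. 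The principal obstacle is precisely this case analysis: the hyperbolic subcase is essentially immediate from shifting along the axis of $g$, but when $g$ fixes the branch point $c$ one must exploit the non-trivial action of $g$ on the germs at $c$ together with the triviality of edge stabilisers in $T'$ to produce the required shorter segment.
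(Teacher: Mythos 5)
Your reduction $\ell \le \mathrm{Lip}(\Tilde f)\cdot d_T(a,y)$ is correct (and your observation that $G$-equivariance must be assumed, i.e.\ that $\Tilde f$ should really be an $\mathcal{O}$-map, is a fair reading of the statement), but the step you reduce to is not true: the bound $d_T(a,y)\le \mathrm{vol}(T/G)$ fails in general, so the pigeonhole-plus-case-analysis you sketch cannot be completed. Concretely, let $G=F_3=\langle u,v,t\rangle$, let $T$ be the universal cover of the rose with three petals of length $1/3$ (so $\mathrm{vol}(T/G)=1$), let $T'$ be the universal cover of the rose with petals of length $1$, and let $\Tilde f$ be the straight (linear on edges) equivariant map corresponding to the automorphism $u\mapsto U$, $v\mapsto T^{-1}UVU$, $t\mapsto U^{-1}V^{-1}U^{-1}TV$ (these three words generate, hence form a basis). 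Take $x,y,z$ to be the vertices $1$, $uvtu$, $uvtut$ of $T$. Then $\Tilde f(x)=1$, $\Tilde f(y)=UVU$, $\Tilde f(z)=TV$, so the median is $c=1=\Tilde f(x)$; moreover the images of the four edges of $[x,y]$ are, respectively, the edge $[1,U]$ and geodesics joining $U$ to $UT^{-1}UVU$, $UT^{-1}UVU$ to $UV$, and $UV$ to $UVU$, all of which stay in the branch of words beginning with $U$. Hence the only point of $[x,y]$ mapping to $c$ is $x$ itself, so $a=x$ and $d_T(a,y)=4/3>1=\mathrm{vol}(T/G)$. The same example shows why your final case analysis has no teeth: the element relating the two domain edges in the orbit of the $u$-edge is $g=uvt$, which acts on $T'$ as translation by $UV$ and maps the branch $\overline{B}$ into itself, so neither the elliptic nor the hyperbolic case produces any contradiction; more fundamentally, the pigeonhole only yields orbit relations, while the minimality of $a$ concerns the single point $c$, and there is no mechanism in your argument forcing a new point of $(a,y]$ to map to $c$ itself.

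Note also that any correct argument must use both sides of $y$ (in the example above the first preimage $b$ of $c$ on $[y,z]$ is very close to $y$, which is why the lemma itself is not contradicted); bounding the one-sided quantity $d_T(a,y)$ alone can never work. For comparison, the paper does not prove this lemma at all but cites Goldstein--Turner and Cooper; the standard proofs of the bound $\mathrm{BCC}(\Tilde f)\le \mathrm{Lip}(\Tilde f)\cdot\mathrm{vol}(T/G)$ proceed differently, e.g.\ by factoring the ($G$-equivariant) map through a sequence of folds or by the classical bounded-cancellation arguments, rather than by locating the last preimage of the branch point on one side.
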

The following lemma shows that the (reduced) images of a given path,
under all $\mathcal{O}$-maps coincide up to a bounded error.
\begin{lemma}[{{\cite[Lemma 2.20]{syrigos2016irreducible}}}]\label{lem2.20}
    Let $\Tilde{f},\Tilde{h}:T\to T'$ be two $\mathcal{O}$-maps. There is a positive constant $C$ (which depends only on $\Tilde{f}, \Tilde{h}$ and $T$)  such that for every path $p$ in $T$, the reduced paths $[\Tilde{f}(p)]$ and $[\Tilde{h}(p)]$ are equal, except possibly for some subpaths near their endpoints whose
lengths are bounded by $C$.
\end{lemma}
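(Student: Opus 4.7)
The plan rests on two observations: a uniform pointwise bound between $\Tilde f$ and $\Tilde h$ coming from equivariance and compactness, followed by a purely tree-theoretic comparison of the reduced images. For the first, I would set $D := \sup_{x \in T} d_{T'}(\Tilde f(x), \Tilde h(x))$ and argue that $D$ is finite. Indeed, the $G$-equivariance of both maps together with the isometric $G$-action on $T'$ makes $x \mapsto d_{T'}(\Tilde f(x), \Tilde h(x))$ a $G$-invariant Lipschitz function, so it descends to a continuous function on the compact finite metric graph $T/G$ (of total edge length $1$), where it attains its maximum.

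Next, I would reduce to the case where $p$ is a reduced path: since $[\Tilde f(p)] = [\Tilde f([p])]$ and analogously for $\Tilde h$, it suffices to consider $p = [x,y]$ for some $x,y \in T$. Then $\gamma_1 := [\Tilde f(p)]$ and $\gamma_2 := [\Tilde h(p)]$ are the unique geodesics in $T'$ joining $\Tilde f(x)$ to $\Tilde f(y)$, and $\Tilde h(x)$ to $\Tilde h(y)$, respectively; by the previous step, their corresponding endpoints are pairwise within distance $D$.

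The last step is a tree-theoretic comparison: in an $\mathbb{R}$-tree, two geodesics whose endpoints are pairwise $D$-close share a common subgeodesic, and the remaining portions near the endpoints are uniformly controlled by $D$. Concretely, let $\sigma = \gamma_1 \cap \gamma_2$. If $\sigma$ is a non-degenerate subgeodesic $[p,q]$ and $\gamma_1,\gamma_2$ traverse it in the same direction, then the maximality of $\sigma$ as a common sub-arc forces $\Tilde f(x)$ and $\Tilde h(x)$ (respectively $\Tilde f(y)$ and $\Tilde h(y)$) to lie in distinct branches of $T'$ at $p$ (respectively $q$); hence the geodesic $[\Tilde f(x), \Tilde h(x)]$ passes through $p$, yielding $d_{T'}(\Tilde f(x),p) + d_{T'}(p,\Tilde h(x)) = d_{T'}(\Tilde f(x),\Tilde h(x)) \leq D$, with an analogous bound at $q$. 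Thus the four end-segments of $\gamma_1$ and $\gamma_2$ all have length at most $D$ and the middles agree on $\sigma$. In the remaining cases---$\sigma$ empty, a single point, or traversed oppositely by $\gamma_1$ and $\gamma_2$---a standard bridge argument between disjoint or tangent convex sets in a tree forces $|\gamma_1|,|\gamma_2|$ themselves to be bounded by a fixed multiple of $D$, so the entire reduced images may be absorbed into the end-segments. Taking $C$ to be a suitable multiple of $D$ finishes the proof. The main obstacle is this last step: one must carefully distinguish the two possible orientations in which $\gamma_2$ can traverse the common subpath relative to $\gamma_1$, and handle the degenerate configurations uniformly; the equivariance-and-compactness step is routine.
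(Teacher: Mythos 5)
Your proof is correct, but it takes a genuinely different route from the paper's. The paper anchors both reduced images to one common geodesic: assuming $T$ has a non-free vertex $\Tilde{v}$, equivariance (plus triviality of edge stabilisers) forces $\Tilde{f}(\Tilde{v})=\Tilde{h}(\Tilde{v})$; any geodesic $[\Tilde{x},\Tilde{y}]$ is then sandwiched inside $[g_1\Tilde{v},g_2\Tilde{v}]$ with $d(\Tilde{x},g_1\Tilde{v}),d(\Tilde{y},g_2\Tilde{v})\le \mathrm{vol}(T/G)$, so each of $[\Tilde{f}(p)]$, $[\Tilde{h}(p)]$ agrees with the common geodesic $[g_1\Tilde{f}(\Tilde{v}),g_2\Tilde{f}(\Tilde{v})]$ up to end-segments of length at most $\mathrm{Lip}\cdot\mathrm{vol}(T/G)$, and the case with no non-free vertex is delegated to the classical free-group statement. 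You instead set $D=\sup_{x\in T} d_{T'}(\Tilde{f}(x),\Tilde{h}(x))$, finite because the displacement function is $G$-invariant and continuous and $T/G$ is compact, and then use the elementary tree fact that two geodesics with $D$-close endpoints coincide off end-pieces controlled by $D$; your orientation/degeneracy case analysis for the overlap $\sigma=\gamma_1\cap\gamma_2$ is exactly what is needed to make that fact precise, and each case does close as you indicate. What your route buys is uniformity: no case split on the existence of a non-free vertex and no appeal to the free-group literature. What the paper's route buys is an explicit constant $C=\max\{\mathrm{Lip}(\Tilde{f}),\mathrm{Lip}(\Tilde{h})\}\cdot\mathrm{vol}(T/G)$ depending only on Lipschitz constants and quotient volume, whereas your $D$ genuinely depends on the maps themselves (for a free action two equivariant maps can differ by a large equivariant translation even with Lipschitz constant $1$); this is harmless, since the lemma only asks for $C=C(\Tilde{f},\Tilde{h},T)$. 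One cosmetic slip: a general $T\in\mathcal{O}$ need not have quotient volume $1$, but compactness of $T/G$ is all your argument actually uses.
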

\begin{proof}
 We first suppose that there is at least one non-free vertex $\Tilde{v}\in V(T)$. The $G$-equivariance of $\mathcal{O}$-maps implies that $\Tilde{f}(\Tilde{v})=\Tilde{h}(\Tilde{v})$. Let $\Tilde{x},\Tilde{y}\in V(T)$ be two vertices of $T$ and $p=[\Tilde{x},\Tilde{y}]$ the geodesic from $\Tilde{x}$ to $\Tilde{y}$. There are $g_1,g_2\in G$ such that $[\Tilde{x},\Tilde{y}]\subseteq [g_1\Tilde{v},g_2\Tilde{v}]$ and both distances $d(\Tilde{x},g_1\Tilde{v})$, $d(\Tilde{y},g_2\Tilde{v})$ are less than or equal to the
quotient volume $\mathrm{vol}(T/G)$. Indeed, let $T_{\Tilde{x}},T_{\Tilde{y}}$ be the two trees obtained by subtracting $\Tilde{x},\Tilde{y}$ from $T$ along with the connected component that contains the midpoint of $[\Tilde{x},\Tilde{y}]$. If we consider the closed ball $B(\Tilde{x},\mathrm{vol}(T/G))\subseteq T_{\Tilde{x}}$, resp. $B(\Tilde{y},\mathrm{vol}(T/G))\subseteq T_{\Tilde{y}}$, then there is $g_1\in G$, resp. $g_2\in G$, such that $g_1\Tilde{v}\in B(\Tilde{x},\mathrm{vol}(T/G))$, resp. $g_2\Tilde{v}\in B(\Tilde{y},\mathrm{vol}(T/G))$. It is clear that $[\Tilde{x},\Tilde{y}]\subseteq [g_1\Tilde{v},g_2\Tilde{v}]$.\par
Therefore, the reduced image $[\Tilde{f}(p)]$ is contained in the reduced path $[\Tilde{f}(g_1\Tilde{v}),\Tilde{f}(g_2\Tilde{v})]$, except possibly two segments (one near each endpoint) whose lengths
are uniformly bounded by the constant $C_1=\mathrm{vol}(T/G)\mathrm{Lip}(\Tilde{f})$. Similarly, $[\Tilde{f}(p)]$ is contained in the reduced path $[\Tilde{h}(g_1\Tilde{v}),\Tilde{h}(g_2\Tilde{v})]$, except possibly two segments (one near each endpoint) whose lengths
are uniformly bounded by the constant $C_2=\mathrm{vol}(T/G)\mathrm{Lip}(\Tilde{h})$. Combining the $G$-equivariance of $\Tilde{f},\Tilde{h}$ with $\Tilde{f}(\Tilde{v})=\Tilde{h}(\Tilde{v})$ we get
\[
[\Tilde{h}(g_1\Tilde{v}),\Tilde{h}(g_2\Tilde{v})]=[g_1\Tilde{h}(\Tilde{v}),g_2\Tilde{h}(\Tilde{v})]=[g_1\Tilde{f}(\Tilde{v}),g_2\Tilde{f}(\Tilde{v})]=[\Tilde{f}(g_1\Tilde{v}),\Tilde{f}(g_2\Tilde{v})].
\]
Therefore, the reduced images $[\Tilde{f}(p)]$, $[\Tilde{h}(p)]$  coincide, up to uniformly bounded error. More precisely, if $C=\max\{C_1,C_2\}$, then for every path $p$ there are path decompositions $[\Tilde{f}(p)]=q_1ms_1$, $[\Tilde{h}(p)]=q_2ms_2$ such that the lengths of $q_1,q_2,s_1,s_2$ are bounded
above by $C$ (which by construction depends only on $\mathrm{Lip}(\Tilde{f})$, $\mathrm{Lip}(\Tilde{h})$ and $\mathrm{vol}(T/G)$).\par
If there are no non-free vertices, we essentially work with homotopy equivalences of finite
graphs and the result is well known (see, for example, \cite{BestvinaHandeltraintracks}).
\end{proof}

\begin{definition}
    Let $T\in\mathcal{O}$ and $\Tilde{f}:T\to T$ be a simplicial $\mathcal{O}$-map. A  (non-trivial) reduced
path $p$ in $T$ is called:
\begin{itemize}
\item \textbf{Nielsen periodic} if $[\Tilde{f}(p)]=gp$ for some $g\in G$.
    \item \textbf{Nielsen preperiodic} if $[\Tilde{f}^n(p)]=gp$ for some $g\in G$ and $n\in\mathbb{N}$.
\end{itemize}
A Nielsen periodic path $p$ is called \textbf{indivisible} if it cannot be written as a concatenation of two Nielsen periodic paths. 
\end{definition}

\subsection{Laminations}
 Attracting and repelling laminations for irreducible automorphisms with exponential growth have been defined in \cite{Lam} in the case of free groups. We follow the approach of \cite{LymanCTs}.\par
 
 Let $\Gamma$ be a marked \textbf{graph of groups} and $T\in \mathcal{O}(\mathcal{G})$ its associated Bass–Serre tree.  More
precisely, $\Gamma$  is a finite graph of groups with trivial edge groups,  its edges are
assigned a positive length, there is an isomorphism $G\to \pi_1(\Gamma)$ well
defined up to composition with inner automorphisms, for each $i\in \{1,\ldots,k\}$ there
is a vertex $v_i$ whose group is conjugate to $G_i$, all other vertex groups are trivial and every terminal vertex is a $v_i$. We write $\partial_{\infty}T$ for the Gromov Boundary of $T$, $V_{\infty}(T)$ for  the collection of vertices of $T$ with infinite valence and $\partial T=\partial_{\infty}T \cup V_{\infty}(T)$. We also write $\partial^2 T$ for $\partial T\times \partial T-\{(x,x),\ x\in\partial T\}$. It is well known that these are independent, up to homeomorphism, of the choice of $T\in \mathcal{O}(\mathcal{G})$. Hence $\partial(G,\mathcal{G})$, $\partial_{\infty}(G,\mathcal{G})$, $\partial^2(G,\mathcal{G})$, $V_{\infty}(G,\mathcal{G})$ can be defined as $\partial T$, $\partial_{\infty} T$, $\partial^2 T$, $V_{\infty}(T)$, respectively, for a $T\in \mathcal{O}(\mathcal{G})$.  Elements of $\partial^2(G,\mathcal{G})$  are called \textbf{algebraic lines}. An algebraic line may be finite, singly
infinite, or bi-infinite, according to whether its endpoints lie in $V_{\infty}(G,\mathcal{G})$ or $\partial_{\infty}(G,\mathcal{G})$. Between any two boundary points $\alpha,\omega\in \partial T$ there is a unique tight path in $T$ called the \textbf{line} $(\alpha,\omega)_T$ from $\alpha$ to $\omega$. Given $T\in \mathcal{O}(\mathcal{G})$, a line in $\partial^2T$ represents an algebraic line $(\alpha,\omega)$ in $\partial^2(G,\mathcal{G})$ if  its endpoints
correspond to  $(\alpha,\omega)$. There is a natural $\mathbb{Z}_2$-action on $\partial^2(G,\mathcal{G})$ given by flipping "coordinates" $(\alpha,\beta)\mapsto (\beta,\alpha)$. From now on, when we write $\partial^2(G,\mathcal{G})$ we shall mean the quotient $\partial^2(G,\mathcal{G})/\mathbb{Z}_2$.
 \begin{definition}
     An \textbf{algebraic lamination} is a closed, $G$-invariant, subset of $\partial^2(G,\mathcal{G})$.  The lines it comprises are the \textbf{lines} or the \textbf{leaves} of the lamination.
 \end{definition}
 We equip $\partial_{\infty}T\cup T$ with a topology, known as the \textbf{observer’s topology}, as follows. Given a point $\Tilde{x}\in T$  a half-tree based at $\Tilde{x}$ is a component of $T\setminus \{\Tilde{x}\}$ together with those boundary points $\xi\in\partial_{\infty}T$ such that the intersection of the tight path $\xi$ with the component contains infinitely many edges. The observer’s topology on $\partial_{\infty}T\cup T$ is generated by the set of half-trees, which form a sub-basis for the topology. Note that the subspace $\partial_{\infty}T\subseteq \partial_{\infty}T\cup T$ inherits the usual topology of $\partial_{\infty}T\cup T$ coming from hyperbolicity of $T$, i.e. a sequence $\Tilde{x}_n\in \partial_{\infty}T\cup T,\ n\in\mathbb{N}$ converges to $\xi\in \partial_{\infty}T$ if
and only if for every finite subpath $\Tilde{\gamma}$ of the ray determining $\xi$, the tight path or ray, starting from a fixed basepoint and, ending at $\Tilde{x}_n$ contains $\Tilde{\gamma}$ for all large $n$. Given a finite tight edge path $\widetilde{\gamma}\in T$, the 
neighborhood $N(\widetilde{\gamma})\subseteq  \partial^2 T$ is defined to be the set of lines that contain $\widetilde{\gamma}$ as a subpath. The sets $N(\widetilde{\gamma})$ form  a basis for the bi-infinite lines. \par
 We denote by $\partial^2 \Gamma$ the image of $\partial^2 T$ under the natural projection $T\to T/G=\Gamma$. We give $\partial^2 \Gamma$ the quotient topology. For a tight edge path $\gamma$ in $\Gamma$, $N(\gamma)$ is defined to be the set of those lines in $\Gamma$ that contain $\gamma$ as a
subpath. The sets $N(\gamma)$ form a basis for any bi-infinite line in $\partial^2\Gamma$.\par
If $\Tilde{f}:T\to T'$ is an $\mathcal{O}$-map, then there is a natural homeomorphism $\hat{f}:\partial T\to \partial T'$ which induces a homeomorphism $\Tilde{f}_{\#}:\partial^2 T\to \partial^2 T'$ and a homeomorphism $f_{\#}:\partial^2\Gamma\to\partial^2\Gamma'$, where $\Gamma$ and $\Gamma'$ are the quotient graphs corresponding to $T$ and $T'$ respectively.\par
 
An element of $G$ is called \textbf{peripheral} if it stabilises a vertex, i.e. if it is contained in a conjugate of a vertex group of some
(and hence any) tree $T\in \mathcal{O}(\mathcal{G})$. The conjugacy class of a nonperipheral element of $G$ determines a periodic bi-infinite line in $\partial^2(G,\mathcal{G})$. More precisely, $\partial_{\infty}(G,\mathcal{G})$ can be identified
with the set of (simply) infinite reduced words in the free product decomposition corresponding to $\mathcal{G}$. For any nonperipheral element $g\in G$ we define the infinite
words $g^{+\infty}=\lim\limits_{n\to+\infty}g^n$ and $g^{-\infty}=\lim\limits_{n\to+\infty}g^{-n}$ and thus we derive the line $(g^{-\infty},g^{+\infty})\in \partial^2(G,\mathcal{G})$. We say that a line in $\partial^2(G,\mathcal{G})$  is \textbf{carried} by the conjugacy class of a free factor $A$ of $G$ of the form $A=G_{i_1}\ast\ldots\ast G_{i_{\nu}}\ast F_{\lambda}$, $i_j\in \{1,\ldots,k\}$, if it is
in the closure of the periodic lines in $\partial^2(G,\mathcal{G})$ determined by the conjugacy classes of nonperipheral
elements of $A$.\par
 Let $\ell_n$ be a sequence of bi-infinite lines in $\partial_{\infty}(G,\mathcal{G})$. We note that $\ell_n$ converges to $\ell\in\partial_{\infty}(G,\mathcal{G})$ in the observer's topology if and only if for every $M\in\mathbb{N}$ and every finite segment $P\subseteq \ell$ of length $M$, there are elements $g_n\in G$ and segments $Q_n\subseteq \ell_n$ such that $g_nQ_n\supseteq P$ for all large $n\in\mathbb{N}$.  \par
Let $[\phi]\in \mathrm{Out}(G,\mathcal{G})$ and $\ell,\ell'$ two bi-infinite lines in $\partial^2(G,\mathcal{G})$. We say that $\ell'$ is \textbf{weakly attracted} to $\ell$ under the action of $[\phi]$ if there is a topological representative $\Tilde{f}:T\to T$ of $[\phi]$ such that the sequence $\Tilde{f}_{\#}^k(\ell')$ converges to $\ell$ as $k$ tends to infinity. We will also write $[\phi]_{\#}^k(\ell')\to\ell$ instead of $\Tilde{f}_{\#}^k(\ell')\to \ell$. 
\begin{remark}
    Note that the convergence $[\phi]_{\#}^k(\ell')\to\ell$ is independent of the chosen topological representative $\Tilde{f}$ of $[\phi]$. Indeed, let $i_g:T\to T$, $t\mapsto gt$ be the isometry corresponding to the inner automorphism $\tau_g(h)=ghg^{-1}$ and $i_g\circ \Tilde{f}:T\to T$ be another topological representative of $[\phi]$. If the sequence $x_k=\Tilde{f}_{\#}^k(\ell')$ converges to $\ell$, we will prove that the sequence $y_k=(i_g\circ \Tilde{f})_{\#}^k(\ell')$ converges to $\ell$, too. Let $M\in\mathbb{N}$ and a segment $P\subseteq \ell$ of length $M$. There are elements $g_n\in G$ and segments $Q_n\subseteq \Tilde{f}_{\#}^n(\ell')$ such that $g_nQ_n\supseteq P$ for all large $n\in\mathbb{N}$. We also have, $(i_g\circ \Tilde{f})^n=i_{\underbrace{\tiny{g\cdot\phi(g)\cdot\ldots\cdot\phi(\ldots \phi(g)\ldots))}}_{n}}\circ \Tilde{f}^n$, hence $Q_n\subseteq \Tilde{f}_{\#}^n(\ell')$ implies that $g\cdot\phi(g)\cdot\ldots\cdot\phi(\ldots \phi(g)\ldots))Q_n=i_{\underbrace{g\cdot\phi(g)\cdot\ldots\cdot\phi(\ldots \phi(g)\ldots))}_{n}}(Q_n)\subseteq (i_g\circ\Tilde{f})_{\#}^n(\ell')$ which we may write as $ h_nQ_n\subseteq (i_g\circ\Tilde{f})_{\#}^n(\ell')$ for $h_n=\underbrace{\tiny{g\cdot\phi(g)\cdot\ldots\cdot\phi(\ldots \phi(g)\ldots))}}_{n}\in G$. To conclude that $y_k\to \ell$ we set $T_n\coloneqq h_nQ_n$ and $b_n\coloneqq g_nh_n^{-1}\in G$. We now have that $T_n\subseteq (i_g\circ \Tilde{f})^n_{\#}(\ell)$ and for all large $n\in\mathbb{N}$
\begin{align*}
        g_nQ_n\supseteq P\Rightarrow \underbrace{g_nh_n^{-1}}_{b_n}\cdot \underbrace{h_nQ_n}_{T_n}\supseteq P \Rightarrow b_n\cdot T_n\supseteq P.
    \end{align*}
\end{remark}
A subset $U\subseteq \partial^2(G,\mathcal{G})$ is called an \textbf{attracting neighborhood}
of $\ell\in \partial^2(G,\mathcal{G})$ for the action of $[\phi]$ if $\Tilde{f}_{\#}(U)\subseteq U$ and if $\{\Tilde{f}^k_{\#}(U),\, k\ge0\}$ is a neighborhood basis for $\ell\in \partial^2(G,\mathcal{G})$. A line $\ell$ is called \textbf{birecurrent} if it is bi-infinite and every subpath of $\ell$ occurs
infinitely often as a subpath of each end of $\ell$.
\begin{definition}
    Let $[\phi]\in\mathrm{Out}(G,\mathcal{G})$ be an outer automorphism of $G$. A closed, $G$-invariant subset $\Lambda^+\subseteq \partial^2(G,\mathcal{G})$ is called an \textbf{attracting lamination} for $[\phi]$ if it is the closure of a single
line $\ell$ such that:
\begin{enumerate}
    \item The line $\ell$ is birecurrent.
    \item The line $\ell$ has an attracting neighborhood for the action of some iterate of $[\phi]$.
    \item The line $\ell$ is not carried by any $[\phi]$-periodic $\mathbb{Z}$ or $\mathbb{Z}_2\ast \mathbb{Z}_2$ free factor.
\end{enumerate}
The line $\ell$ is said to be \textbf{generic} for $\Lambda^+$. The set of attracting laminations for $[\phi]\in\mathrm{Out}(G,\mathcal{G})$ is denoted by $\mathcal{L}([\phi])$.

\begin{lemma}[{{\cite[Lemma 4.2, Lemma 4.6]{LymanCTs}}}]
    Let $[\phi]\in\mathrm{Out}(G,\mathcal{G})$ be an outer automorphism of $G$. The set of attracting laminations $\mathcal{L}([\phi])$ is $[\phi]$-invariant and finite. 
\end{lemma}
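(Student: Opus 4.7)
The plan is to prove the two assertions separately: $[\phi]$-invariance first, then finiteness.

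For $[\phi]$-invariance, I would fix $\Lambda^+\in\mathcal{L}([\phi])$ with generic leaf $\ell$ and a topological representative $\tilde{f}:T\to T$ of $[\phi]$, and verify that $[\phi](\Lambda^+)=\tilde{f}_\#(\Lambda^+)$ satisfies the three defining properties of an attracting lamination. Since $\tilde{f}_\#$ is a $G$-equivariant homeomorphism of $\partial^2(G,\mathcal{G})$, the image $\tilde{f}_\#(\Lambda^+)$ is closed and $G$-invariant, and it is the closure of $\tilde{f}_\#(\ell)$. Birecurrence passes to $\tilde{f}_\#(\ell)$: any finite subpath of $\tilde{f}_\#(\ell)$ appears, modulo bounded cancellation, as the $\tilde{f}_\#$-image of a subpath of $\ell$ which occurs infinitely often in each end of $\ell$. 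If $U$ is an attracting neighborhood of $\ell$ for $[\phi]^k$, then $\tilde{f}_\#(U)$ is an attracting neighborhood of $\tilde{f}_\#(\ell)$ for the same iterate, because $\tilde{f}_\#$ commutes with $[\phi]^k_\#$. Finally, not being carried by any $[\phi]$-periodic $\mathbb{Z}$ or $\mathbb{Z}_2\ast\mathbb{Z}_2$ free factor is preserved, since $[\phi]$ permutes the collection of $[\phi]$-periodic free factor systems; if $\tilde{f}_\#(\ell)$ were carried by such a factor $A$, then $\ell$ would be carried by $[\phi]^{-1}(A)$, which is also $[\phi]$-periodic, contradicting the assumption.

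For finiteness, the strategy is to establish an injection from $\mathcal{L}([\phi])$ into the (finite) set of exponentially growing strata of a relative train track representative. After replacing $[\phi]$ by a power if necessary (which does not alter $\mathcal{L}([\phi])$), I would invoke the existence of an eg-aperiodic relative train track (or CT) representative $\tilde{f}:T\to T$ from \cite{LymanCTs}, equipped with an $\tilde{f}$-invariant filtration $\emptyset=T_0\subset\cdots\subset T_m=T$ reduced with respect to $[\phi]$. Each EG stratum $H_r$ with Perron–Frobenius eigenvalue $\lambda_r>1$ gives rise to an attracting lamination $\Lambda_r$ whose generic leaf is obtained by iterating $\tilde{f}$ on the interior of an edge of $H_r$ and passing to the limit in the observer's topology. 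Conversely, given any $\Lambda^+\in\mathcal{L}([\phi])$ with generic leaf $\ell$, the attracting neighborhood property forces the $\tilde{f}_\#$-iterates of any line meeting a suitable neighborhood of $\ell$ to develop arbitrarily long legal segments in a single EG stratum $H_r$ (the highest stratum crossed by $\ell$); one then assigns $\Lambda^+\mapsto H_r$. Aperiodicity of $M_r(\tilde{f})$ together with reducedness of the filtration yields injectivity of this assignment. Since a filtration of a finite graph has finitely many strata, $\mathcal{L}([\phi])$ is finite.

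The main obstacle is the construction and injectivity of the assignment $\Lambda^+\mapsto H_r$. The delicate point is showing that the generic leaf of $\Lambda^+$ genuinely stabilizes in a unique EG stratum, rather than oscillating between several strata; this is where eg-aperiodicity, the bounded cancellation lemma (Lemma referenced above), and the reducedness of the filtration combine. In the free product setting, additional care is required to handle the finitely many orbits of non-free vertices, but since these form a discrete $G$-invariant set and $\tilde{f}$ is $G$-equivariant, they do not interfere with the limiting process for legal leaves of EG strata.
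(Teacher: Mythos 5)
The paper gives no proof of this lemma (it is quoted directly from \cite{LymanCTs}, the free-product analogue of Bestvina--Feighn--Handel), and your outline follows exactly the standard route of that source: invariance by transporting the three defining properties through the induced homeomorphism of $\partial^2(G,\mathcal{G})$ (with the periodic-factor condition pulled back along $[\phi]^{-1}$), and finiteness via the correspondence between attracting laminations and the exponentially growing strata of an eg-aperiodic relative train track representative of a suitable power. The single step you leave unexecuted --- that the top EG stratum crossed by a generic leaf determines the lamination, i.e.\ $\Lambda^+$ coincides with the lamination obtained by iterating an edge of that stratum, which is what gives your injectivity (and for finiteness alone the surjectivity of the stratum-to-lamination map would already suffice) --- is precisely the content of the cited results, so your plan is essentially identical to the source's argument.
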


\end{definition}
We now focus on irreducible with irreducible powers (iwip) outer automorphisms. We shall give a more constructive definition for attracting laminations which will allow us to prove some extra properties that are needed for our purposes. Let $[\phi]\in\mathrm{Out}(G)$ be an iwip automorphism and $\Tilde{f}:T\to T$ a train track representative of $[\phi]$. Then, by Perron-Frobenius theory, there is a uniform constant $\lambda>1$ such that for every edge $\widetilde{e}\in E(T)$, $\Tilde{f}$ expands the length of $\widetilde{e}$ by $\lambda$. Changing $\Tilde{f}$ with some iterate, we may assume that there is a point $x\in T$ in the interior of some edge $\widetilde{e}$ such that $\Tilde{f}(x)=gx$ for some $g\in G$. Therefore, composing with the isometry corresponding to the inner automorphism $x\mapsto g^{-1}xg$, we may further assume that there is a point $x\in T$ in the interior of some edge $\widetilde{e}$ such that $\Tilde{f}(x)=x$. For some sufficiently small $\epsilon>0$, the $\epsilon$-neighbourhood $U$ of $x$ remains in the interior of the edge $\widetilde{e}$ and $\Tilde{f}^n(U)\supseteq U$ for every $n\in\mathbb{N}$. Any isometry $\ell:(-\epsilon,\epsilon)\to U$ with $\ell(0)=x$ can be extended to a unique isometry $\ell:\mathbb{R}\to T$ such that $\ell(\lambda^nt)=\Tilde{f}^n(\ell(t))$ and we say that the bi-infinite line $\ell$ is
obtained by iterating a neighbourhood of $x$.\par
The following pair of definitions are going to be used in the definition of the stable lamination associated to an iwip outer automorphism $[\phi]\in\mathrm{Out}(G,\mathcal{G})$. 
\begin{definition}
    \begin{enumerate}
        \item We say that two isometric immersions $C:[a,b]\to T$ and $D:[c,d]\to T$, where $a,b,c,d\in\mathbb{R}$ are equivalent, if there exists an isometry $q:[a,b]\to [c,d]$ such that $D\circ q=C$. Note
that this relation is an equivalence relation on the set of isometric immersions
from a finite interval to $T$.
        \item Let $P$ be an equivalence class of an isometric immersion, $\gamma:[a,b]\to T$ be a
representative of $P$ and $\Tilde{f}:T\to T$ a train track representative as above. We define $\Tilde{f}(P)$ to be the equivalence class of $\Tilde{f}\circ \gamma:[a,b]\to T$, (where we consider the reduced image $[\Tilde{f}(\gamma)]$ after re-scaling in order to become an isometric immersion).
\item A \textbf{line segment} (or \textbf{leaf segment}) of an isometric immersion $\ell:\mathbb{R}\to T$ is the equivalence class of the
restriction to a finite interval. 
    \end{enumerate}
\end{definition}
To any isometric immersion $\ell:\mathbb{R}\to T$ we associate the set $I_{\ell}$ of the line
segments of $\ell$.
\begin{definition}
    Let $\ell,\ell':\mathbb{R}\to T$ be two isometric immersions. We say that $\ell,\ell'$ are
\textbf{equivalent}, if for every line segment $P$ of $\ell$ there is an element $g\in G$ and a line segment $Q$ of $\ell'$ such that $P=gQ$ and vice versa.
\end{definition}
It is well known that if we construct any other bi-infinite line by iterating
a neighbourhood of any other periodic point, then it must be equivalent to $\ell$ as constructed above.

\begin{definition}
    Let $\Tilde{f}:T\to T$ be a train track representative of $[\phi]$. The \textbf{stable lamination} in $T$-coordinates associated to $\Tilde{f}$, denoted by $\Lambda=\Lambda_{\Tilde{f}}^+(T)$, is the equivalence class of isometric immersions
from $\mathbb{R}$ to $T$ containing some (and hence any) immersion obtained, as above
(by iterating a neighborhood of a periodic point). The
immersions representing $\Lambda$ are called \textbf{lines} or \textbf{leaves} and the line segments of some line of $\Lambda$ are
called \textbf{line} (or leaf) \textbf{segments} of the stable lamination $\Lambda$.
\end{definition}

\begin{definition}
    Two paths $p,q$ in a $G$-tree $T$ are called \textbf{projectively equivalent} if they project to the same path in the quotient graph $T/G$.
\end{definition}

\begin{definition}
An isometric immersion $\ell:\mathbb{R}\to T$ is called \textbf{quasiperiodic}, if for every $L>0$ there exists $L'>0$  such that for every line segment $P$ of $\ell$ of length $L$ and for every line segment $Q$ of length $L'$, there is some $g\in G$ and a line segment $P'$ which is projectively
equivalent to $P$ such that $gP'\subseteq Q$.
\end{definition}

The following remarks are well known to the experts.

\begin{remark}
    Every line of a stable lamination $\Lambda_{\Tilde{f}}^+(T)$ is quasiperiodic.
\end{remark}

\begin{remark}\label{rem4.4}
    There
is no line $\ell$ of the lamination $\Lambda_{\Tilde{f}}^+(T)$ which is "periodic" in the sense that $\ell$ can not be of the form $\ldots P_{-1}P_0P_1\ldots$ where every segment $P_i$ is projectively equivalent to some fixed segment $P$.
\end{remark}

Let $\Tilde{f}:T\to T$ be
a train track representative of $[\phi]\in\mathrm{Out}(G,\mathcal{G})$ and let $\Lambda_{\Tilde{f}}^+(T)$  be the associated stable lamination. If $T'$ is some other point of $\mathcal{O}$, then there is an $\mathcal{O}$-map $\tau:T\to T'$ (see \autocite[Lemma~4.2]{francaviglia2015stretching}).  For any isometric immersion $\ell:\mathbb{R}\to T$ we denote
by $\tau(\ell):\mathbb{R}\to T'$ the isometric immersion corresponding to the reduced image of the composition $\tau\circ \ell$. This isometric immersion is unique up to precomposition
with an isometry of $\mathbb{R}$.\par
The following lemma seems also to be well known to experts, but we include a sketch of proof for completeness reasons.
\begin{lemma}[{{\cite[Lemma 3.13]{syrigos2016irreducible}}}]
Let $[\phi]\in\mathrm{Out}(G,\mathcal{G})$, $T,T'\in\mathcal{O}$, $\tau:T\to T'$ an $\mathcal{O}$-map, $\Tilde{f}:T\to T$ a train track representative of $[\phi]$ and $\Lambda_{\Tilde{f}}(T)$ the stable lamination associated to $\Tilde{f}$ in $T$-coordinates.
    \begin{enumerate}
        \item If $\ell,\ell':\mathbb{R}\to T$ are equivalent lines of $\Lambda_{\Tilde{f}}(T)$, then $\tau(\ell)$, $\tau(\ell')$ are equivalent lines in $T'$.
        \item If $\ell$ is quasiperiodic, then $\tau(\ell)$ is also  quasiperiodic.
    \end{enumerate}
\end{lemma}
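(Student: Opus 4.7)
The plan is to exploit two tools in tandem: the $G$-equivariance of $\tau$, which forces $[\tau(g\cdot p)]=g[\tau(p)]$ for every path $p$ and every $g\in G$, and the bounded cancellation constant $\mathrm{BCC}(\tau)$, which controls by how much the reduced image $[\tau(\cdot)]$ can shrink near the endpoints of a subpath. Combined with the existence of a reverse $\mathcal{O}$-map $\tau':T'\to T$, whose composition $\tau'\circ\tau$ agrees with $\mathrm{id}_{T}$ on every path up to bounded endpoint error by Lemma~\ref{lem2.20}, these tools allow us to transfer line segments back and forth between $\ell$ and $\tau(\ell)$ with uniformly bounded distortion.

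For part (1), fix a line segment $P^{*}$ of $\tau(\ell)$ of length $L$. Because $\tau(\ell)$ is, up to reparametrization, the reduced image of $\tau\circ\ell$, I can find a line segment $P$ of $\ell$ such that $[\tau(P)]$ contains $P^{*}$ together with buffer subpaths of length at least $\mathrm{BCC}(\tau)+1$ on either side. By hypothesis there exist $g\in G$ and a line segment $Q$ of $\ell'$ with $P=gQ$; the $G$-equivariance of $\tau$ then gives $\tau(P)=g\cdot\tau(Q)$, hence $[\tau(P)]=g[\tau(Q)]$, so that $g^{-1}P^{*}\subseteq[\tau(Q)]$. The buffer condition guarantees that $g^{-1}P^{*}$ lies in the portion of $[\tau(Q)]$ which is a genuine subpath of $\tau(\ell')$, so $g^{-1}P^{*}$ is itself a line segment of $\tau(\ell')$ and the identity $P^{*}=g\cdot(g^{-1}P^{*})$ furnishes the required equivalence; swapping the roles of $\ell$ and $\ell'$ completes (1).

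For part (2), fix $L>0$ and a line segment $P^{*}$ of $\tau(\ell)$ of length $L$. Choose $P$ in $\ell$ as above with $[\tau(P)]\supseteq P^{*}$; the length $L_{0}=|P|$ is uniformly bounded in terms of $L$, as a bound of the form $L_{0}\leq \mathrm{Lip}(\tau')\cdot L+C$ follows from pulling $P^{*}$ back along $\tau'$ and invoking Lemma~\ref{lem2.20}. Quasiperiodicity of $\ell$ yields $L_{1}>0$ such that every line segment of $\ell$ of length at least $L_{1}$ contains a segment $gP''$ with $g\in G$ and $P''$ projectively equivalent to $P$, positioned sufficiently deep in the interior. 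Now set
\[
L'=\mathrm{Lip}(\tau)\bigl(L_{1}+2L_{0}+2\,\mathrm{BCC}(\tau)+2\bigr),
\]
so that any line segment $Q^{*}$ of $\tau(\ell)$ of length $L'$ pulls back to a line segment $Q$ of $\ell$ of length at least $L_{1}$ with enough slack on each side. Apply quasiperiodicity inside $Q$ to produce $gP''\subseteq Q$ interiorly; $G$-equivariance yields $g[\tau(P'')]\subseteq[\tau(Q)]$ up to bounded cancellation, and the slack forces the corresponding $G$-translate of $P^{*}$ to lie inside $Q^{*}$, which is exactly the quasiperiodicity condition for $\tau(\ell)$.

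The main obstacle is the uniform bookkeeping of buffers: each passage between $\ell$ and $\tau(\ell)$ costs up to $\mathrm{BCC}(\tau)$ at each endpoint, and the quasiperiodicity argument carries out three such passages (from $P^{*}$ to $P$, from $P$ to $gP''\subseteq Q$, and from $gP''$ back to a translate of $P^{*}$ inside $Q^{*}$). Cooper's bound $\mathrm{BCC}(\tau)\leq\mathrm{Lip}(\tau)\cdot\mathrm{vol}(T/G)$ recalled in the preliminaries makes these additive errors uniform in the choice of segments, and the resulting constant $L'$ depends only on $L$, on $\tau$, and on the ambient trees $T,T'$.
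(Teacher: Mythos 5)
Your route is genuinely different from the paper's: the paper factors $\tau$ as an $\mathcal{O}$-permutation composed with a finite sequence of elementary $\mathcal{O}$-folds and checks both statements move by move, whereas you work with $\tau$ directly through $G$-equivariance, the bounded cancellation constant, and a quasi-inverse $\mathcal{O}$-map $\tau'$ via Lemma \ref{lem2.20}. For part (1) this works: equivalence of lines is defined through genuine $G$-translates, $G$-equivariance gives $[\tau(gQ)]=g[\tau(Q)]$ exactly, and your buffer of size $\mathrm{BCC}(\tau)+1$ correctly guarantees that $g^{-1}P^{*}$ survives inside the reduced image of the whole line $\ell'$, so it is a line segment of $\tau(\ell')$ with $P^{*}=g\,(g^{-1}P^{*})$.

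Part (2), however, has a genuine gap at its final step. Quasiperiodicity is phrased via projective equivalence (equal projections to $T/G$), which is strictly weaker than lying in the same $G$-orbit when vertex stabilisers are nontrivial: a leaf can be rerouted at a non-free vertex by a stabiliser element without changing its projection. Quasiperiodicity of $\ell$ therefore only gives you a segment $P''\subseteq Q$ projectively equivalent to $P$, yet your conclusion treats $[\tau(P'')]$ as if it contained a $G$-translate of $P^{*}$. That would hold if $P''$ were an actual translate $hP$ (then $[\tau(P'')]=h[\tau(P)]\supseteq hP^{*}$), but nothing forces this: if $P$ crosses a non-free vertex along $e_{1}\bar{e}_{2}$ while $P''$ crosses it along $e_{1}a\bar{e}_{2}$ for a nontrivial stabiliser element $a$, then $\tau(e_{1})$ may cancel heavily against $\tau(\bar{e}_{2})$ and not at all against $a\tau(\bar{e}_{2})$, so $[\tau(P)]$ and $[\tau(P'')]$ need not even be projectively equivalent, and $Q^{*}$ need not contain any segment projectively equivalent to $P^{*}$. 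Hence the assertion that the slack forces a $G$-translate of $P^{*}$ into $Q^{*}$ does not follow from the tools you invoke; $G$-equivariance plus $\mathrm{BCC}$ control cancellation amounts, not the dependence of the cancellation pattern on the forgotten stabiliser elements. This is exactly the difficulty the paper's decomposition is designed to tame: for a single $\mathcal{O}$-fold all cancellation happens at translates of one folded turn, whose crossings can be counted. To rescue your direct argument you would need an extra statement, e.g.\ that $\tau$ carries projectively equivalent leaf segments to segments with projectively equivalent reduced images, or a strengthened quasiperiodicity of leaves in terms of $G$-translates; neither is established in your proposal.
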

\begin{proof}[Sketch of proof]
    We factor $\tau:T\to T'$ as a finite composition $\tau=\sigma\circ f$ of an $\mathcal{O}$-permutation $\sigma$ and a sequence of $\mathcal{O}$-foldings $f$ (see \autocite[Proof of Proposition~7.5]{francaviglia2015stretching}). It is enough to prove that the lemma
is true for $\mathcal{O}$-permutations and $\mathcal{O}$-foldings. For an $\mathcal{O}$-permutation $\sigma$ parts $1$ and $2$ follow immediately, from the definitions and the observation that $[\sigma(\ell)]=\sigma(\ell)$. \par
We suppose now that $f$ is an equivariant isometric fold of some segments $p,q$ starting from the same point $i(p)=i(q)=\Tilde{v}$. We denote by $r$ the image of $p,q$ after the fold. If $P'$ is a line segment of $f(\ell)$  which contains some elements in the orbit of $r$, then there exists some line segment $P$ which contains the same number of elements in the orbit of $r$ such that $[f(P)]=P'$. Since $\ell,\ell'$ are equivalent, there is some $g\in G$ such that $gP'\subseteq \ell'$ hence $[f(gP)]$ is a line segment of $[f(\ell')]$. Since $[f(gP)]$ is a  translation of $P'$,  it follows that $[f(\ell)],[f(\ell')]$ are equivalent.\par
To prove the quasiperiodity of $[f(\ell)]$, we fix $L>0$ as in the definition. Denote by $M$ the
maximum number of elements in the orbit of $\Tilde{v}$ which are contained in a line segment of length $L$. Denote by $L'$ the number corresponding, by quasiperiodicity, to $L''=L+2M\cdot\mathrm{length}(p)$. If $P'$ is a line segment  of length $L$, then there is some leaf segment $P$ containing the same number of elements in the orbit of the folded turn such that $[f(P)]=P'$. It follows by construction, that $P$ has length at most $L''$ and so some
projectively equivalent segment $P''$ is contained in every line segment of $\ell$ of length $L'$. For any line segment $Q$ of $[f(\ell)]$ of length $L'$, its pre-image has length at least $L'$  and
therefore the pre-image has the required property. Therefore $Q$ contains a projectively
equivalent path to $P'$ and the quasiperiodicity follows. 
\end{proof}

\begin{definition}
    Let $[\phi]\in\mathrm{Out}(G,\mathcal{G})$, $T,T'\in\mathcal{O}$, $\tau:T\to T'$ an $\mathcal{O}$-map, $\Tilde{f}:T\to T$ a train track representative of $[\phi]$ and $\Lambda_{\Tilde{f}}(T)$ the stable lamination associated to $\Tilde{f}$ in $T$-coordinates. The stable lamination of $\Tilde{f}$ in $T'$-coordinates, which we denote by $\Lambda_{\Tilde{f}}(T')$, is the class of equivalent lines containing $\tau(\ell)$ for some (and by previous lemma any) line of $\Lambda_{\Tilde{f}}(T)$.
\end{definition}

It can be shown \cite[Lemma 3.16]{syrigos2016irreducible} that if $\Tilde{f}:T\to T$, $\Tilde{h}:T'\to T'$ are two train track representatives of $[\phi]\in\mathrm{Out}(G,\mathcal{G})$, then the stable lamination of $\Tilde{f}$ in $T'$-coordinates coincides with the stable lamination of $\Tilde{h}$ in $T'$-coordinates i.e. $\Lambda_{\Tilde{f}}(T')=\Lambda_{\Tilde{h}}(T')$. The definition of the stable
lamination associated to $[\phi]$, that follows,  does not depend on the chosen
representative, but only on $[\phi]$.

\begin{definition}
   Let $[\phi]\in\mathrm{Out}(G,\mathcal{G})$ be an outer automorphism of $G$ and $\Tilde{f}$ a train track representative of $[\phi]$. The \textbf{stable lamination} $\Lambda_{[\phi]}^+$ associated to $[\phi]$  is the collection
   \[
   \Lambda_{[\phi]}^+\coloneqq \{\Lambda_{\Tilde{f}}^+(T),\ T\in\mathcal{O}\}.
   \]
   The \textbf{unstable lamination} $\Lambda_{[\phi]}^-$ associated to $[\phi]$ is the stable lamination of $[\phi]^{-1}$. 
\end{definition}

We denote by $\mathcal{IL}$ the set of stable
laminations $\Lambda_{[\phi]}$ as $[\phi]$ ranges over all iwip automophisms of $\mathrm{Out}(G,\mathcal{G})$. The group $\mathrm{Out}(G,\mathcal{G})$ acts on the set $\mathcal{IL}$ via
\[
[\psi]\cdot \Lambda_{[\phi]}^+=\Lambda^+_{[\psi]\cdot [\phi]\cdot [\psi]^{-1}}.
\]
More specifically, if $\ell\subseteq T$ is a line of $\Lambda_{[\phi]}^+$ and $\Tilde{h}:T\to T$ is an $\mathcal{O}$-representative of $[\psi]$, then the reduced image $[\Tilde{h}(\ell)]$ represents a leaf of $\Lambda^+_{[\psi]\cdot [\phi]\cdot [\psi]^{-1}}$.  Since for any iwip outer automorphism $[\phi]\in\mathrm{Out}(G,\mathcal{G})$ and for any $[\psi]\in\mathrm{Out}(G,\mathcal{G})$, the outer automorphism $[\psi]\cdot [\phi]\cdot [\psi]^{-1}$ 
is iwip relative to $\mathcal{O}$, the previous action
is well defined. \par
The \textbf{stabiliser} of the stable lamination $\Lambda_{[\phi]}^+$ is the subgroup
\[
\mathrm{Stab}(\Lambda_{[\phi]}^+)=\{[\psi]\in  \mathrm{Out}(G,\mathcal{G}),\ [\psi]\cdot \Lambda_{[\phi]}^+=\Lambda_{[\phi]}^+ \}\leq \mathrm{Out}(G,\mathcal{G}).
\]
Note that the \textbf{relative
centraliser} $C([\phi])=\{[\psi]\in\mathrm{Out}(G,\mathcal{G}),\, [\psi]\cdot [\phi]=[\phi]\cdot [\psi]\}$ of $[\phi]$ in $\mathrm{Out}(G,\mathcal{G})$  is a subgroup of
$\mathrm{Stab}(\Lambda_{[\phi]}^+)$.
\section{The first main theorem}\label{mainthmsection3}
The following lemma is not essential for our purposes but it is of independent interest.
\begin{lemma}
      If $[\phi]\in \mathrm{Out}(G,\mathcal{G})$ is an outer automorphism such that $\phi(G_3\ast\cdots\ast G_k\ast F_p)\subseteq G_3\ast\cdots\ast G_k\ast F_p$ and $[\phi]([G_i])\subseteq [G_i]$ for every $i=1,\ldots,k$, then, for every $g_1\in G_1$ and $g_2\in G_2$, the image $\phi(g_1g_2)$ contains at most one element of $G_1$ or at most one element of $G_2$.
\end{lemma}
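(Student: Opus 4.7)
The plan is to set $H = G_3 \ast \cdots \ast G_k \ast F_p$ so that $G = G_1 \ast G_2 \ast H$ is a three-factor free product, and then to combine the hypotheses to force $\phi(g_i) = h h_i h^{-1}$ (for $i=1,2$) with a \emph{common} $h \in H$, after which the desired bound becomes immediate from a direct computation.

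The first step would be to strengthen $\phi(H) \subseteq H$ to $\phi(H) = H$. For this I would consider the retraction $r \colon G \to H$ that sends $G_1$ and $G_2$ to $1$ and fixes $H$ pointwise. The hypothesis $[\phi]([G_i]) \subseteq [G_i]$ gives $\phi(g) \in u_i G_i u_i^{-1}$ for $g \in G_i$, so that $r(\phi(g)) = 1$; checking on generators then yields the commutative relation $r \circ \phi = (\phi|_H) \circ r$. Consequently $r \circ \phi^{-1}|_H \colon H \to H$ is a right inverse to $\phi|_H$, so $\phi|_H$ is surjective, and combined with the injectivity inherited from $\phi$ this shows $\phi|_H$ is an automorphism of $H$.

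Once $\phi(H) = H$ is in hand, $\phi$ preserves the three-factor system $\mathcal{G}' = \{[G_1], [G_2], [H]\}$, and so lifts equivariantly to the associated Bass-Serre tree $S$, whose quotient is a tripod with trivial-stabilizer centre $v_0$ and leaves $v_1, v_2, v_H$ of stabilizers $G_1, G_2, H$. Writing $\phi(G_i) = u_i G_i u_i^{-1}$, an equivariant simplicial lift $\tilde{\phi} \colon S \to S$ must send $v_H$ to $v_H$ (the unique vertex in its orbit whose stabilizer is exactly $H$), $v_i$ to $u_i v_i$, and $v_0$ to a trivial-stabilizer vertex adjacent to all three images. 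Since the neighbors of $v_H$ form $\{h v_0 : h \in H\}$ and those of $u_i v_i$ form $\{u_i \gamma v_0 : \gamma \in G_i\}$, this adjacency condition forces $u_1 \gamma_1 = u_2 \gamma_2 = h$ for some $h \in H$ and $\gamma_i \in G_i$. Absorbing each $\gamma_i$ into the coset ambiguity of $u_i$ (replacing the representative $u_i$ by $u_i \gamma_i$, with a compensating change in $h_i$), I may assume $u_1 = u_2 = h$.

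The conclusion is then a one-line calculation:
\[
\phi(g_1 g_2) = h h_1 h^{-1} \cdot h h_2 h^{-1} = h h_1 h_2 h^{-1},
\]
whose letters alternate through $H, G_1, G_2, H$, so that this expression is already in reduced normal form also in the finer decomposition $G = G_1 \ast \cdots \ast G_k \ast F_p$, and contains exactly one element of $G_1$ and one of $G_2$ — strictly stronger than the claimed disjunction. I expect the delicate point to be the Bass-Serre step, where one must justify the existence of the equivariant lift $\tilde{\phi}$ sending the distinguished vertices to the indicated images; this follows from the uniqueness, up to equivariant isomorphism, of the Bass-Serre tree of the three-factor decomposition, which is available precisely because the preliminary step secured $\phi(H) = H$.
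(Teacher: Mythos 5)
Your preliminary step (upgrading $\phi(H)\subseteq H$ to $\phi(H)=H$ via the retraction $r$ and the relation $r\circ\phi=(\phi|_H)\circ r$) is correct, but the Bass--Serre step is a genuine gap, and the conclusion you extract from it is in fact false. A $\phi$-equivariant map $\tilde\phi\colon S\to S$ always exists, but nothing forces it to preserve adjacency: an equivariant map may send the three tripod edges to long paths, so $\tilde\phi(v_0)$ need not be a common neighbour of $\tilde\phi(v_1)$, $\tilde\phi(v_2)$ and $\tilde\phi(v_H)$. Uniqueness of the Bass--Serre tree applies to a fixed graph-of-groups decomposition; what your argument requires is a $\phi$-twisted equivariant \emph{simplicial isomorphism} $S\to S$, and the existence of such a map is precisely the statement that $[\phi]$ fixes the point of the relative outer space for the three-factor system $\{[G_1],[G_2],[H]\}$ represented by the tripod. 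That is not implied by $\phi$ merely preserving the conjugacy classes $[G_1]$, $[G_2]$, $[H]$.

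Concretely, take $G=G_1\ast G_2\ast\langle x\rangle$, fix $1\neq a\in G_1$, and let $\phi$ be the partial conjugation $\phi|_{G_1}=\mathrm{id}$, $\phi(g_2)=ag_2a^{-1}$ for all $g_2\in G_2$, $\phi(x)=x$. All hypotheses of the lemma hold. The elements conjugating $G_1$ onto $\phi(G_1)=G_1$ form $N_G(G_1)=G_1$, those conjugating $G_2$ onto $\phi(G_2)=aG_2a^{-1}$ form $aG_2$, and $G_1\cap aG_2=\{a\}$; so the only common conjugator is $a\in G_1$, never an element of $H$, and the identity $u_1\gamma_1=u_2\gamma_2=h\in H$ that you claim is forced by adjacency cannot hold. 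Correspondingly, $\phi(g_1g_2)=(g_1a)\,g_2\,a^{-1}$ contains two letters of $G_1$ whenever $g_1a\neq1$, so your final conclusion (exactly one letter from each of $G_1$ and $G_2$) is false; the lemma survives only because it asserts a disjunction, here satisfied through the single $G_2$-letter. This is also why the paper argues differently: it assumes, for contradiction, that $\phi(g_1g_2)$ has at least two letters from each of $G_1$ and $G_2$, builds a topological representative on the Bass--Serre tree of the full decomposition, and uses the factorization of $\mathcal{O}$-maps into $\mathcal{O}$-folds followed by an $\mathcal{O}$-permutation to conclude that the representative would have to be an isometry, a contradiction --- no rigidity of the lift is assumed at the outset, it is derived case by case after the available folds are performed.
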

\begin{proof}
       Since $[\phi]([G_i])\subseteq [G_i]$ for every $i=1,\ldots,k$, there are $t,x\in G$ such that $\phi(G_1)\subseteq xG_1x^{-1}$ and $\phi(G_2)\subseteq tG_2t^{-1}$.
         We may also assume \autocite[Lemma~3.2]{SykFixedSubgroups} that $\phi(G_1)=xG_1x^{-1}$ and $\phi(G_2)=tG_2t^{-1}$. The restriction of $\phi$ on $G_3\ast\cdots\ast G_k\ast F_p$ can be extended to an automorphism $\phi'$ of $G$ by
        \[
        \begin{matrix}
g_1\mapsto g_1,& \text{for all }g_1\in G_1,\\
g_2\mapsto g_2,& \text{for all }g_2\in G_2.
\end{matrix}
        \]
        After replacing $\phi$ with $\phi\circ (\phi')^{-1}$ we may assume that the restriction of $\phi$ on $G_3\ast\cdots\ast G_k\ast F_p$ is the identity. To sum up, the action of $\phi$ on $G$ is given by
        \[
         \begin{cases}
g_1\mapsto x\phi_1(g_1)x^{-1},& \text{for all }g_1\in G_1,\text{ where } \phi_1\in \mathrm{Aut}(G_1)\\
g_2\mapsto t\phi_2(g_2)t^{-1},& \text{for all }g_2\in G_2,\text{ where } \phi_2\in \mathrm{Aut}(G_2)\\
g_k\mapsto g_k,& \text{for all } g_k\in G_k,\, k\not\in\{1,2\}\\
x_i\mapsto x_i,& \text{for all } i\in\{1,\ldots,p\}.
\end{cases}
        \]

        Let $\Gamma$ be the graph of groups described in Figure \ref{fig:002} below, where $G_{v_{i}}=G_i$ and in the central vertex $w$ we associate the trivial group.

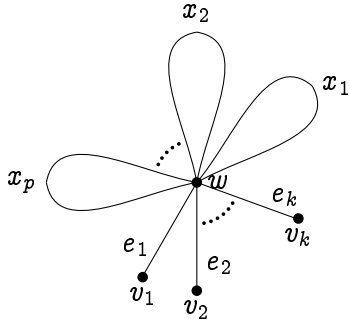
\begin{figure}[H]
\begin{center}
\begin{tikzpicture}
\draw (0,0) to [out=100,in=-170] (0,2);
\draw (0,0) to [out=80,in=-10] (0,2);
\fill (0,0) circle (2pt);
\fill (-0.72,-1.26) circle (2pt);
\fill (0,-1.44) circle (2pt);
\fill (1.35,-0.48) circle (2pt);
\node[below] at (-0.72,-1.26) {$v_1$};
\node[below] at (0,-1.44) {$v_2$};
\node[below] at (1.35,-0.48) {$v_k$};
\node[right] at (0,0) {$w$};
\begin{scope}[rotate=-50]
\draw (0,0) to [out=100,in=-170] (0,2);
\draw (0,0) to [out=80,in=-10] (0,2);
\end{scope}
\begin{scope}[rotate=90]
\draw (0,0) to [out=100,in=-170] (0,2);
\draw (0,0) to [out=80,in=-10] (0,2);
\end{scope}
\node[above] at (0,2) {$x_2$};
\node[right] at (1.532,1.285) {$x_1$};
\node[left] at (-2,0) {$x_p$};
\node[right] at (0,-1.1) {$e_2$};
\node[left] at (-0.5,-0.866) {$e_1$};
\node[left] at (1.5,-0.2) {$e_k$};
\begin{scope}[rotate=0]
\fill (-0.4,0.4) circle (0.7pt);
\end{scope}
\begin{scope}[rotate=10]
\fill (-0.4,0.4) circle (0.7pt);
\end{scope}
\begin{scope}[rotate=20]
\fill (-0.4,0.4) circle (0.7pt);
\end{scope}
\begin{scope}[rotate=-10]
\fill (-0.4,0.4) circle (0.7pt);
\end{scope}
\begin{scope}[rotate=-20]
\fill (-0.4,0.4) circle (0.7pt);
\end{scope}
\begin{scope}[rotate=60]
\fill (0,-0.55) circle (0.7pt);
\fill[rotate=-10] (0,-0.55) circle (0.7pt);
\fill[rotate=-20] (0,-0.55) circle (0.7pt);
\fill[rotate=-30] (0,-0.55) circle (0.7pt);
\fill[rotate=-40] (0,-0.55) circle (0.7pt);
\fill[rotate=-50] (0,-0.55) circle (0.7pt);
\end{scope}
\draw (0,0) -- (0,-1.4);
\draw[rotate=-30] (0,0) -- (0,-1.4);
\draw[rotate=70] (0,0) -- (0,-1.4);
\end{tikzpicture}
\end{center}
 \caption{The graph of groups $\Gamma$}
 \label{fig:002}
\end{figure}

        Let $T$ be the Bass-Serre tree corresponding to $\Gamma$ and $\Tilde{f}:T\to T$ the topological representative of $\phi$  which is given by (we extend naturally in the rest of the vertices and the edges of $T$)
        \[
        \Tilde{f}:\begin{cases}
        \widetilde{w}\mapsto\widetilde{w}\\
            \widetilde{v}_i\mapsto\widetilde{v}_i,& \text{ for all } i\in\{3,\ldots,k\}\\
            \widetilde{v}_1\mapsto x\widetilde{v}_1\\  
            \widetilde{v}_2\mapsto t\widetilde{v}_2
        \end{cases}
        \]
      Suppose, toward a contradiction, that $\phi(g_1g_2)$ contains more than two elements of both $G_1,G_2$. This implies that the reduced image $\Tilde{f}(\widetilde{e}_2\cdot\widetilde{e}_1^{-1})$ crosses at least two edges from the set $G\cdot\widetilde{e}_1^{\pm}\coloneqq G\cdot\widetilde{e}_1\cup G\cdot \widetilde{e}_1^{-1}$ and at least two edges from the set $G\cdot\widetilde{e}_2^{\pm}\coloneqq G\cdot\widetilde{e}_2\cup G\cdot \widetilde{e}_2^{-1}$. We note that whenever we fold two edges we will still use the same notation for the resulting paths after the fold. There are two cases:\par
\textbf{Case $1$}: $\Tilde{f}(\widetilde{e}_1)$ contains only one element of the orbits $G\cdot\widetilde{e}_1^{\pm}$ or $\Tilde{f}(\widetilde{e}_2)$ contains only one element of the orbits $G\cdot\widetilde{e}_2^{\pm}$. (1)\par
Suppose that $\Tilde{f}(\widetilde{e}_1)$ contains only one element of the orbits $G\cdot\widetilde{e}_1^{\pm}$. Note that  $\Tilde{f}(\widetilde{e}_2)$ may contain several elements of both $G\cdot\widetilde{e}_1^{\pm}$ and $G\cdot\widetilde{e}_2^{\pm}$. After suitable folds we may assume that $\Tilde{f}(\widetilde{e}_1)=\widetilde{e}_1$ and, after probably some more folds, that $\Tilde{f}(\widetilde{e}_2)$ begins either with the direction of $\widetilde{e}_2$ or the path $\widetilde{e}_1$ (here we still denote by $\widetilde{e}_1$ and $\widetilde{e}_2$ the resulting paths, see also the discussion above). If $\Tilde{f}(\widetilde{e}_2)$ begins with the first edge of $\widetilde{e}_2$, then since there are no available folds, $\Tilde{f}$ should be an isometry ($\Tilde{f}$ can be written as a finite composition of $\mathcal{O}$-foldings followed by an $\mathcal{O}$-permutation, see \autocite[Proof of Proposition~7.5]{francaviglia2015stretching}) which is a contradiction ($\Tilde{f}(\widetilde{e}_2)$ contains more edges than $\widetilde{e}_2$ itself).   If $\Tilde{f}(\widetilde{e}_2)$ starts with $\widetilde{e}_1$, then we fully fold $\widetilde{e}_1$ over $\widetilde{e}_2$. There are no other folds, since the folded path $\widetilde{e}_1$ goes to itself, hence $\Tilde{f}$ is an isometry which is a contradiction.

 \textbf{Case $2$}: Both $\Tilde{f}(\widetilde{e}_1)$  and $\Tilde{f}(\widetilde{e}_2)$ contain at least two elements of the set $G\cdot\widetilde{e}_1^{\pm}\cup G\cdot\widetilde{e}_2^{\pm}$.

        If the $\Tilde{f}$ image of $\widetilde{e}_1$ (resp. $\widetilde{e}_2$) begins  with an edge in the orbit of $T_1\coloneqq\{\widetilde{e}_3,\ldots,\widetilde{e}_k,\widetilde{x}_1,\ldots,\widetilde{x}_p\}$, then we may fold an initial segment of $\widetilde{e}_1$ (resp.$ \widetilde{e}_2$) into $T_1$. Note also that $\Tilde{f}(\widetilde{e}_1)=[\widetilde{w},x\widetilde{v}_1]$ (resp. $\Tilde{f}(\widetilde{e}_2)=[\widetilde{w},t\widetilde{v}_2]$) can not begin nor end with an element in the orbit of $\widetilde{e}_1^{-1}$ or $\widetilde{e}_2^{-1}$, since the final vertices of the corresponding paths is $\widetilde{v}_1$, (here we use the assumption that $[\phi]([G_i])=[G_i],\ i=1,2$). Therefore, we may assume that $\Tilde{f}(\widetilde{e}_1)$ (resp. $\Tilde{f}(\widetilde{e}_2)$) begins with an element in 
$G\cdot\widetilde{e}_1^+\cup G\cdot\widetilde{e}_2^+$, i.e. the orbit of $\widetilde{e}_1$ or $\widetilde{e}_2$. We shall now prove that $\Tilde{f}$ is an $\mathcal{O}$-permutation and thus we will have the desired contradiction (to our assumption that both $\Tilde{f}(\widetilde{e}_i),\ i=1,2$ have simplicial length at least $2$). Following \autocite[Proof of Proposition~7.5]{francaviglia2015stretching} we may write $\Tilde{f}$
 as a composition $\Tilde{f}=i\circ p$ where $p$ is a finite composition of $\mathcal{O}$-foldings and $i$ an $\mathcal{O}$-permutation. As $\widetilde{e}_1,\widetilde{e}_2$ begin with the same vertex (as paths) we can still talk about folding at the direction of these paths  \autocite[Proposition~7.5]{francaviglia2015stretching}. We distinguish three cases regarding the first edges of $\Tilde{f}(\widetilde{e}_1)$ and $\Tilde{f}(\widetilde{e}_2)$. 
 
\begin{itemize}
    \item If the path $\Tilde{f}(\widetilde{e}_1)$ begins with an element in the orbit of the first edge of $\widetilde{e}_1$ and $\Tilde{f}(\widetilde{e}_2)$ begins with an element in the orbit of the first edge of $\widetilde{e}_2$, then by construction there are no $\mathcal{O}$-foldings, hence $\Tilde{f}$ is an $\mathcal{O}$-permutation.
    \item If the path $\Tilde{f}(\widetilde{e}_1)$ begins with an element in the orbit of $\widetilde{e}_2$ and $\Tilde{f}(\widetilde{e}_2)$ begins with an element in the orbit of $\widetilde{e}_1$, then, again by construction, there are no $\mathcal{O}$-foldings, hence $\Tilde{f}$ is an $\mathcal{O}$-permutation.  
    \item  Suppose now that both $\Tilde{f}(\widetilde{e}_1)$ and $\Tilde{f}(\widetilde{e}_2)$ begin with an element in the orbit of the first edge of the same $\widetilde{e}_i$, say $\widetilde{e}_1$, i.e. $\Tilde{f}(\widetilde{e}_1)=\widetilde{e}_1\cdot g_{11}\widetilde{e}_1^{-1}\cdot p$ and $\Tilde{f}(\widetilde{e}_2)=\widetilde{e}_1\cdot g_{21}\widetilde{e}_1^{-1}\cdot q$ where it may be $g_{11}\not= g_{12}$. After performing the available fold, either the resulting graph $T/G$ has one vertex more or one of the paths $\widetilde{e}_i,\ i=1,2$ gets fully folded on the other path and we proceed as in the very first case (1). If the resulting graph $T/G$ has one vertex more, then, since there are no other available folds, the resulting map should be an $\mathcal{O}$-permutation which is a contradiction.
\end{itemize}
\end{proof}

\begin{lemma}\label{mainlemma}
    \begin{enumerate}
       \item 
        If $[\phi]\in \mathrm{Out}(G,\mathcal{G})$ is an automorphism such that $\phi(G_2\ast\cdots\ast G_k\ast F_p)\subseteq G_2\ast\cdots\ast G_k\ast F_p$ and $[\phi]([G_i])\subseteq [G_i]$ for every $i=1,\ldots,k$, then, for every $g_1\in G_1$, the image $\phi(g_1)$ contains at most one element of $G_1$.
        \item 
         If $[\phi]\in \mathrm{Out}(G,\mathcal{G})$ is an automorphism such that $\phi(G_1\ast \cdots \ast G_k\ast F(\{x_1,\ldots,x_{j-i},x_{j+1},\ldots,x_p\}))\subseteq G_1\ast \cdots\ast  G_k\ast F(\{x_1,\ldots,x_{j-i},x_{j+1},\ldots,x_p\})$ and $[\phi]([G_i])\subseteq[G_i]$ for every $i=1,\ldots,k$, then, the image $\phi(x_j)$ contains at most (in fact exactly) one occurrence of $x_j$.
       
     \end{enumerate}
\end{lemma}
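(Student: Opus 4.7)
The plan is to mirror, for both parts, the strategy of the previous lemma: normalize $\phi$ via the ``extend-and-divide'' trick so that it acts as the identity on a large free factor, and then exploit the fold decomposition $\Tilde{f}=i\circ p$ from \cite[Proposition~7.5]{francaviglia2015stretching} applied to a topological representative on the Bass-Serre tree of the graph of groups $\Gamma$ from Figure~\ref{fig:002}.

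For part~(1), the hypothesis $[\phi]([G_1])\subseteq [G_1]$ together with \cite[Lemma~3.2]{SykFixedSubgroups} gives $\phi(G_1)=xG_1x^{-1}$ for some $x\in G$. Extending $\phi|_{G_2\ast\cdots\ast G_k\ast F_p}$ to an automorphism $\phi'$ of $G$ by the identity on $G_1$, and replacing $\phi$ by $\phi\circ(\phi')^{-1}$, I may assume that $\phi$ is the identity on $G_2\ast\cdots\ast G_k\ast F_p$ and acts on $G_1$ as $g_1\mapsto x\phi_1(g_1)x^{-1}$ for some $\phi_1\in\mathrm{Aut}(G_1)$. A direct free-product reduction then shows that $x\phi_1(g_1)x^{-1}$ has at most one $G_1$-syllable: only the last syllable of $x$ can combine with the central $\phi_1(g_1)\in G_1$, and everything outside that central block sits in different factors. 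Equivalently, one can reproduce Case~1 of the previous lemma topologically: the representative $\Tilde{f}$ fixes every edge outside the orbit $G\cdot\widetilde{e}_1$, so any fold in the decomposition $\Tilde{f}=i\circ p$ must involve $\widetilde{e}_1$, and if $\Tilde{f}(\widetilde{e}_1\cdot g_1\widetilde{e}_1^{-1})$ crossed $G\cdot\widetilde{e}_1^{\pm}$ more than twice, no sequence of available folds could leave an $\mathcal{O}$-permutation.

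For part~(2), let $H=G_1\ast\cdots\ast G_k\ast F(\{x_i:i\ne j\})$. Since $\phi$ is an automorphism with $\phi(H)\subseteq H$ and $H$ is a free factor of $G$, a rank argument—any free factor of $G$ contained in another free factor of the same Kurosh rank must coincide with it—yields $\phi(H)=H$. Extending $\phi|_H\in\mathrm{Aut}(H)$ to $\phi'\in\mathrm{Aut}(G)$ by $\phi'(x_j)=x_j$ and replacing $\phi$ by $\phi\circ(\phi')^{-1}$, we reduce to $\phi|_H=\mathrm{id}_H$ and $\phi(x_j)=u$ for some $u\in G$. The corresponding representative $\Tilde{f}:T\to T$ then fixes every edge outside $G\cdot\widetilde{x}_j$, and $\Tilde{f}(\widetilde{x}_j)$ is the reduced path from $\widetilde{w}$ to $u\widetilde{w}$, whose crossings of $G\cdot\widetilde{x}_j^{\pm}$ count precisely the $x_j$-syllables of $u$ in its normal form in $H\ast\langle x_j\rangle$.

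Assume, for a contradiction, that there are at least two such crossings. Every fold in the decomposition $\Tilde{f}=i\circ p$ must involve (a translate of) $\widetilde{x}_j$, since all other edges are fixed. Whenever the initial or terminal edge of $\Tilde{f}(\widetilde{x}_j)$ lies outside the orbit $G\cdot\widetilde{x}_j^{\pm}$, I fold $\widetilde{x}_j$ with the corresponding fixed adjacent edge. After exhausting these reductions, both endpoints of the image of $\widetilde{x}_j$ lie in $G\cdot\widetilde{x}_j^{\pm}$; but the only edges of this orbit incident to $\widetilde{w}$ are $\widetilde{x}_j$ and $x_j^{-1}\widetilde{x}_j$, belonging to the same orbit, and therefore unable to be folded with $\widetilde{x}_j$. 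No further fold is available, and the remaining map is forced to be an $\mathcal{O}$-permutation, contradicting the assumption that $\Tilde{f}(\widetilde{x}_j)$ crosses $G\cdot\widetilde{x}_j^{\pm}$ more than once. Hence $u$ contains at most one $x_j$; and $u\notin H$ (otherwise $\phi(G)\subseteq H\subsetneq G$, contradicting surjectivity), so exactly one $x_j$ appears in $u$. The main obstacle, as in the previous lemma, is controlling the \emph{interior} folds at intermediate vertices of $\Tilde{f}(\widetilde{x}_j)$ rather than at its endpoints; ruling these out requires a careful orbit analysis at each vertex, in the spirit of Case~2 of the previous lemma.
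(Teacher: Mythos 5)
Your overall strategy (normalize $\phi$ so that it is the identity on the complementary free factor, represent it on the Bass--Serre tree of $\Gamma$, and use the factorization $\Tilde{f}=i\circ p$ into $\mathcal{O}$-folds followed by an $\mathcal{O}$-permutation) is the same as the paper's, but both parts have a genuine gap at exactly the step that carries the content of the lemma. In part (1), the ``direct free-product reduction'' is false as stated: after the normalization, nothing in the normal form prevents $x$ from containing $G_1$-syllables in its interior or at its start, and then $x\phi_1(g_1)x^{-1}$ has several $G_1$-syllables. For instance, the injective endomorphism which is the identity on $G_2\ast\cdots\ast G_k\ast F_p$ and sends $g_1\mapsto (ga)g_1(ga)^{-1}$ with $1\neq g\in G_1$, $1\neq a\in G_2$ has the exact shape you reduced to, yet $\phi(g_1)=g\,a\,g_1\,a^{-1}\,g^{-1}$ contains three $G_1$-syllables; it simply fails to be surjective. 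So the conclusion cannot follow from syllable bookkeeping alone: one must use surjectivity to force $x\in (G_2\ast\cdots\ast G_k\ast F_p)\cdot G_1$, and that is precisely what the paper's argument (fold until the image of $\widetilde{e}_1$ begins and ends in $G\cdot\widetilde{e}_1^{+}$, check no $\mathcal{O}$-fold remains, conclude $\Tilde{f}$ is an $\mathcal{O}$-permutation, hence $\Tilde{f}(\widetilde{e}_1)=h\widetilde{e}_1$) delivers. Your parenthetical ``equivalently, one can reproduce Case 1 topologically'' names this argument but only asserts its key claim, so it does not repair the gap.

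In part (2) the missing case is not the ``interior folds'' you flag at the end, but the fold you explicitly rule out. After folding so that $\Tilde{f}(\widetilde{x}_j)$ begins and ends with edges of $G\cdot\widetilde{x}_j^{\pm}$, it can still happen that $u=\phi(x_j)$ begins with $x_j^{\epsilon}$ and ends with $x_j^{-\epsilon}$ (e.g.\ images of the shape $x_j a x_j^{-1}$, which you must rule out, since that is part of proving non-surjectivity). In that case $\Tilde{f}$ identifies the germ of $\widetilde{x}_j$ at $\widetilde{w}$ with the germ at $\widetilde{w}$ pointing towards $x_j^{-1}\widetilde{w}$, so the map is not locally injective and cannot possibly be an $\mathcal{O}$-permutation; observing that these two germs lie in the same edge orbit does not make the identification disappear, it only means the factorization must begin with the fold of the two ends of the loop $x_j$ rather than a fold with a fixed edge. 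Hence your inference ``no further fold is available, so the remaining map is an $\mathcal{O}$-permutation'' breaks down precisely here. The paper's proof is devoted to this case: performing the end-to-end fold maximally produces a quotient graph with one vertex more, after which no fold is available, and an $\mathcal{O}$-permutation onto the original $T$ (which has fewer vertices in its quotient) is impossible; this contradiction shows no fold was available in the first place, so $\Tilde{f}(\widetilde{x}_j)=h\widetilde{x}_j^{\pm}$. Your justification that $\phi(H)=H$ via Kurosh rank, and the final observation that $u\notin H$ forces exactly one occurrence of $x_j$, are fine, but as written the core non-surjectivity step in both parts is either incorrect (part 1) or skips the pivotal case (part 2).
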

\begin{proof}
    \begin{enumerate}
       
 \item As in the case above, we may assume that $\phi(G_1)=xG_1x^{-1}$ for some $x\in G$ and that the action of $\phi$ on $G$ is given by
 \[
 \begin{cases}
     g_1\mapsto x\phi_1(g_1)x^{-1},& \text{for all } g_1\in G_1, \text{ where } \phi_1\in\mathrm{Aut}(G_1)\\
     g_k\mapsto g_k,& \text{for all } g_k\in G_k,\ k\not=1\\
     x_i\mapsto x_i,& \text{for all } i\in \{1,\ldots,p\}
 \end{cases}
 \]
 Let $\Gamma$ and $T$ be the graph of groups and corresponding Bass-Serre tree as in the above case. We consider the topological representative $\Tilde{f}:T\to T$ of $\phi$ defined by
 \[
 \Tilde{f}:\begin{cases}
     \widetilde{w}\mapsto \widetilde{w} \\
     \widetilde{v}_i\mapsto \widetilde{v}_i,& \text{for all } i\in \{2,\ldots,k\}\\
     \widetilde{v}_1\mapsto x\widetilde{v}_1
 \end{cases}
 \]
 After folding we may assume that the reduced image $\Tilde{f}(\widetilde{e}_1)=[\widetilde{w},x\widetilde{v}_1]$ begins and ends with elements of $G\cdot\widetilde{e}_1^+$. Now, we may write $\Tilde{f}$
 as a composition $\Tilde{f}=i\circ p$ where $p$ is a finite composition of $\mathcal{O}$-foldings and $i$ an $\mathcal{O}$-permutation. Since there are no available $\mathcal{O}$-foldings (in the resulting graph-map) $\Tilde{f}$ is an $\mathcal{O}$-permutation hence $\Tilde{f}(\widetilde{e}_1)=h\cdot\widetilde{e}_1$, for some $h\in G$, which implies that the image $\phi(g_1)$ may contain at most one element of $G_1$, for all $g_1\in G_1$.
 \item Again, we may assume that the action of $\phi$ on $G$ is given by
 \[
 \begin{cases}
     g_i\mapsto g_i,& \text{for all } g_i\in G_i,\ i=1,\ldots,k\\
     x_i\mapsto x_i,& \text{for all } i\in \{1,\ldots,p\}\setminus \{j\}\\
     x_j\mapsto \phi(x_j)
 \end{cases}
 \]
 Let $\Gamma$, $T$ the graph of groups and tree that were used in the above cases and the topological representative $\Tilde{f}:T\to T$ of $\phi$ which is induced by the identity on the vertices of the tree of representatives
\[
 \begin{cases}
\widetilde{w}\mapsto  \widetilde{w} \\
  \widetilde{v}_i\mapsto  \widetilde{v}_i,& \text{for all } i\in \{1,\ldots,k\} \end{cases}
 \]
 We focus on the image of the edge $\widetilde{x}_j=[\widetilde{w},x_j\cdot \widetilde{w}]$. After folding appropriately, we may assume that the reduced path $\Tilde{f}(\widetilde{x}_j)$ begins and ends with elements of the orbits $G\cdot\widetilde{x}_j^{\pm}$. We may write $\Tilde{f}$
 as a composition $\Tilde{f}=i\circ p$ where $p$ is a finite composition of $\mathcal{O}$-foldings and $i$ an $\mathcal{O}$-permutation. The only fold that can take place is between the initial and terminal ends of $\widetilde{x}_j$. After performing the maximal sequence of available folds, the resulting graph $T/G$ has one vertex more and, since there are no other available folds, the resulting map should be a permutation which is a contradiction. Hence, there are no available folds which means that $\Tilde{f}(\widetilde{x}_j)=h\cdot \widetilde{x}_j^{\pm}$, for some $h\in G$. 
      \end{enumerate}
\end{proof}

In the special case of free groups the following lemmas, i.e. Lemma \ref{Cor3.2.2}, Lemma \ref{lem3.2.4} and Lemma \ref{lem3.4.2}, have been proved in \cite{Tits1}. In the case of free products the proofs are essentially the same as in \cite{Tits1}, based in \autocite[Corollary~3.2.2]{Tits1}, \autocite[Lemma~3.2.4]{Tits1} and \autocite[Lemma~3.4.2]{Tits1}, respectively, but we reproduce them here for completeness reasons.

\begin{lemma}\label{Cor3.2.2}
   Let $[\phi]\in\mathrm{Out}(G,\mathcal{G})$ be an outer automorphism of $G$ such that $[\phi]([G_i])\subseteq[G_i],\ i=1,2,\ldots,k$, $T\in\mathcal{O}$, and let $\Tilde{f}:T\to T$ be a topological representative of $[\phi]$. If $H_i$ is a stratum consisting only of the orbit $G\cdot\widetilde{e}$ of an edge $\widetilde{e}\in E(T)$, then the reduced image $\Tilde{f}(\widetilde{e})$ crosses the orbit of $\widetilde{e}$ in either direction, at most once.
\end{lemma}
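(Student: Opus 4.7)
The plan is to reduce Lemma \ref{Cor3.2.2} to the previously proved Lemma \ref{mainlemma} by collapsing the lower part of the filtration. The key structural input is that $T_{i-1}$ is $\Tilde{f}$-invariant (being part of a filtration), so the associated free factor system $\mathcal{F}(T_{i-1})$ is $[\phi]$-invariant. After replacing $\phi$ with a suitable automorphism in its outer class, one may arrange that $\phi$ setwise preserves the subgroups corresponding to the $G$-orbits of components of $T_{i-1}$.

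I would then collapse each component of $T_{i-1}$ equivariantly to a single vertex, producing a new $G$-tree $T'$ in which $G\cdot\widetilde{e}$ is the unique edge orbit and $\Tilde{f}$ descends to a topological representative $\Tilde{f}':T'\to T'$ of $[\phi]$ with respect to a refined free factor system. Since the edges in $H_i$ are not collapsed, the number of crossings of $G\cdot\widetilde{e}$ by the reduced image of $\widetilde{e}$ is the same in $T$ and in $T'$. The quotient $T'/G$ then consists of a single edge either (a) joining two distinct vertices, yielding a free product splitting $G=A\ast B$ with $A,B$ the two vertex stabilisers, or (b) having both endpoints identified, yielding $G=A\ast\langle x\rangle$ with $x$ a new free generator corresponding to the loop.

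In case (a), after regarding $A$ as a single factor in a refined free factor system of $G$ (and further decomposing $B$ via the original $\mathcal{G}$), the hypotheses of Lemma \ref{mainlemma}(1) are met with $A$ playing the role of $G_1$, yielding that for every $a\in A$, $\phi(a)$ contains at most one $A$-letter in its normal form. Case (b) is analogous via Lemma \ref{mainlemma}(2) applied to $x$. In either case, one invokes the Bass-Serre correspondence between normal forms and reduced paths: the number of times $\Tilde{f}(\widetilde{e})$ traverses $G\cdot\widetilde{e}$ in either orientation equals the number of $A$-letters (respectively $x$-letters) in the group element determined by the endpoints of $\Tilde{f}(\widetilde{e})$, and hence is at most one.

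The main obstacle I expect is the bookkeeping required to ensure the hypotheses of Lemma \ref{mainlemma} can be arranged simultaneously: the setwise preservation of the complementary factor must coexist with the conjugacy-class condition $[\phi]([G_i])\subseteq[G_i]$ for every $i$, and when $A$ (or $B$) is itself a non-trivial free product of some $G_j$'s with a free group, one must carefully treat it as a single abstract free factor of the refined decomposition rather than decomposing it back into the original $G_j$'s, while still verifying the hypotheses in both the refined and original free factor systems.
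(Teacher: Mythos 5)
Your reduction collapses $T_{i-1}$ and then applies Lemma \ref{mainlemma} to the resulting one-edge splitting; in the connected cases this is essentially the paper's own argument (its Cases 1(a) and 1(b)) in a different dressing. The genuine gap is your opening claim that, after composing with an inner automorphism, one may arrange that $\phi$ setwise preserves the subgroups corresponding to the $G$-orbits of components of $T_{i-1}$. Invariance of $T_{i-1}$ under $\Tilde{f}$ only gives that $[\phi]$ \emph{permutes} the conjugacy classes of the (at most two) component groups $[A],[B]$; it may interchange them, and since $A$ and $B$ are non-conjugate free factors no inner automorphism can undo such a swap. Nothing in the hypothesis $[\phi]([G_i])\subseteq[G_i]$ excludes this when the two components of $T_{i-1}$ contain no non-free vertices (for instance in the purely free situation: two vertices, a loop at each, joined by $e$, with an automorphism of $F_2$ exchanging the two loops; the paper's proof devotes its Case 2(b) precisely to this configuration). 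In the swap case your coarse decomposition $G=A\ast B$ does not satisfy the hypotheses of Lemma \ref{mainlemma}(1) — neither $[\phi]([A])\subseteq[A]$ nor $\phi(B)\subseteq B$ can be arranged — so your case (a) argument cannot start, and the proposal offers no substitute. The paper handles it by adding an auxiliary edge $y$ joining the two components, extending $\Tilde{f}$ over $\widetilde{y}$ so that the enlarged lower subgraph is connected and invariant (for an automorphism of the enlarged group), and then running the connected-case argument; some device of this kind is missing from your plan.

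Two smaller points, both fixable. First, the assertion that the crossing count of $G\cdot\widetilde{e}$ is unchanged under the collapse deserves its one-line proof: since $\Tilde{f}(\widetilde{e})$ is reduced in the tree $T$, two consecutive $H_i$-edges separated by a segment of $T_{i-1}$ can never be an inverse pair, so collapsing creates no cancellation among $H_i$-edges. Second, Lemma \ref{mainlemma}(1) concludes something about the normal form of $\phi(a)$ for $a\in A$, not directly about $\Tilde{f}'(\widetilde{e})$; as in the paper's Case 1(b) you must first normalize so that $\phi(B)=B$ and then deduce that the element $x$ with $\phi(A)=xAx^{-1}$ contains no $A$-letter, which is what forces the geodesic between the images of the two vertices of $\widetilde{e}$ to cross the edge orbit at most once. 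With the swap case supplied (say, by the paper's added-edge trick) and these translations made explicit, your collapsed-splitting route gives a correct, mildly different treatment of the non-swap disconnected case; without them it does not cover the statement.
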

\begin{proof}
We may assume that $H_i$ is the highest stratum, i.e. $T_i=T$, and we distinguish two cases. 
\begin{enumerate}
    \item If $T_{i-1}/G$ is connected, then since $H_i=G\cdot\widetilde{e}$, either both of the endpoints of $e$ belong to $T_{i-1}/G$ or $e$ has one of its ends in $(T_i/G)-(T_{i-1}/G)$, where $e$ is the edge of $T/G$ corresponding to $G\cdot\widetilde{e}$.
    \begin{enumerate}
        \item Suppose that both of the endpoints of $e$ belong to $T_{i-1}/G$. Every maximal tree of the quotient graph $T_{i-1}/G$ contains the endpoints of the edge $e$ corresponding to $G\cdot\widetilde{e}$. Since $H_i$ consists only of the orbit $G\cdot\widetilde{e}$, every maximal tree of $T_{i-1}/G$ is also a maximal tree of $T_{i}/G=T/G$. Hence $\pi_1(T/G,X)\cong\pi_1(T_{i-1}/G,X)\ast\langle x_j\rangle$ where $X$ is a maximal tree of $T/G$ as above and the group element $x_j$ corresponds to the edge $e$. Since the $G$-invariant tree $T_{i-1}$ is also $\Tilde{f}$ invariant, Lemma \ref{mainlemma} (2) applies. 
        \item 
        Suppose now that $e$ has one of its ends, $v_i$, in $(T_i/G)-(T_{i-1}/G)$. The vertex $v_i$ must be non-free i.e. $G_{\widetilde{v}_i}\not=1$, being terminal. The image $\Tilde{f}(\widetilde{e})$ is determined by the images of its ends $\widetilde{w}$ and $\widetilde{v_i}$, since
        \[
        \Tilde{f}(\widetilde{e})=\left[\Tilde{f}(\widetilde{w}),\Tilde{f}(\widetilde{v_i})\right]=[\underbrace{\Tilde{f}(\widetilde{w})}_{\in T_{i-1}},\underbrace{x\widetilde{v}_i}_{\in H_i}]
        \]
        where $\phi(G_{\widetilde{v}_i})=G_{x\cdot \widetilde{v}_i}=xG_{\widetilde{v}_i}x^{-1}$. The group $G$ has a free product decomposition $G=\pi_1(T/G)\cong \pi_1(T_{i-1}/G)\ast G_{\widetilde{v}_i}$ such that $\phi\left(\pi_1(T_{i-1}/G)\right)\subseteq \pi_1(T_{i-1}/G)$. From Lemma \ref{mainlemma} (1) it follows that $x$ contains no element from $G_{\widetilde{v}_i}$ hence $\Tilde{f}(\widetilde{e})$ crosses the orbit $G\cdot\widetilde{e}^{\pm}$ at most (exactly) once.
    \end{enumerate}
     \item If $T_{i-1}/G$ is disconnected, denote by $C_1=c_1/G$ and $C_2=c_2/G$ (where $c_1,c_2\subseteq T_{i-1}$) its components. 
     \begin{enumerate}
         \item Suppose at first that $\Tilde{f}(c_i)\subseteq G\cdot c_i,\ i=1,2$. We will define the maps $\Tilde{h}_1,\Tilde{h}_2:T\to T$ whose supports are contained in $c_1,c_2$, respectively. If $C_j$ has a single vertex, let $\Tilde{h}_j:T\to T$ be the identity. If $C_j$ has at least two
vertices, choose one, $v_j$,  that is not an endpoint of $e$ and let $\Tilde{h}_j:T\to T$ be
a map with support in $c_j$ that is homotopic to the identity and that satisfies $(\Tilde{h}_j\Tilde{f})(\widetilde{v}_j)=\widetilde{v}_j$. Let $\Tilde{f}':T\to T$ be the topological representative defined by $\Tilde{f}'([u,w])=\left[(\Tilde{h}_1\Tilde{h}_2\Tilde{f})(u),(\Tilde{h}_1\Tilde{h}_2\Tilde{f})(w)\right]$ for every $u,w\in V(T)$. Since $\Tilde{f}(\widetilde{e})$ and $\Tilde{f}'(\widetilde{e})$ cross $G\cdot\widetilde{e}^{\pm}$ the same number of times, we may replace $\Tilde{f}$ with $\Tilde{f}'$.  In particular, we may assume
that $\Tilde{f}$ fixes the orbits of $\widetilde{v}_1$ and  $\widetilde{v}_2$. We add an edge $y$ with endpoints $v_1$ and $v_2$ and extend $\Tilde{f}$ by defining $\Tilde{f}(\widetilde{y})$ to be the geodesic $[\Tilde{f}(\widetilde{v}_1),\Tilde{f}(\widetilde{v}_2)]\in G\cdot\widetilde{y}$. This defines a topological representative (of a different
outer automorphism) to which the previous argument applies. 
\item If $\Tilde{f}(c_1)\subseteq G\cdot c_2$ and $\Tilde{f}(c_2)\subseteq G\cdot c_1$ then, arguing as above, we may assume that
there are vertices $x_j\in C_j$ such that $\Tilde{f}(\widetilde{x}_1)\in G\cdot\widetilde{x}_2$ and $\Tilde{f}(\widetilde{x}_2)\in G\cdot\widetilde{x}_1$. We add an edge $y$ as above and extend $\Tilde{f}$ by $\Tilde{f}(\widetilde{y})=[\Tilde{f}(\widetilde{x}_2),\Tilde{f}(\widetilde{x}_1)]\in G\cdot\widetilde{\overline{y}}$. The proof concludes as in the previous case.
     \end{enumerate}
     
\end{enumerate}    
\end{proof}

The following lemma enables us to establish a pairing between elements of $ \mathcal{L}([\phi])$ and $\mathcal{L}([\phi]^{-1})$.
\begin{lemma}[{{\cite[Lemma 4.11]{LymanCTs}}}]
    There is a unique free factor $F$ of minimal rank whose conjugacy class $[F]$ carries every line in $\Lambda^+$.
\end{lemma}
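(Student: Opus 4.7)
The plan is to handle existence and uniqueness separately. For existence, observe that $G$ itself (regarded as a free factor of itself) has conjugacy class carrying every line of $\Lambda^+$, and the rank of a free factor $A=G_{i_1}\ast\cdots\ast G_{i_\nu}\ast F_\lambda$ (Kurosh rank $\nu+\lambda$) takes values in the non-negative integers. Hence the collection of ranks of free factors carrying $\Lambda^+$ has a minimum, and a minimal-rank carrier $F$ exists.

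For uniqueness, I would argue by contradiction using a meet-of-free-factors argument. Suppose $[F_1]$ and $[F_2]$ are two distinct conjugacy classes of free factors, both of minimal rank, whose conjugacy classes carry every line in $\Lambda^+$. One invokes the meet operation $[F_1]\wedge[F_2]$ on free factor systems, which in the free product setting (developed by Horbez and by Guirardel--Horbez via the Guirardel core of the two Bass--Serre trees associated to $F_1$ and $F_2$) produces a free factor system whose components are conjugacy classes of common subfactors of $F_1$ and $F_2$.

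The crucial ingredient is the statement that an algebraic line carried simultaneously by $[F_1]$ and $[F_2]$ is in fact carried by a component of $[F_1]\wedge[F_2]$. Applied to a generic leaf $\ell$ of $\Lambda^+$ (which is carried by both $[F_1]$ and $[F_2]$ by hypothesis), this yields a component $[F']$ of the meet carrying $\ell$. Since $\Lambda^+$ is the closure of the $G$-orbit of $\ell$, since carrying is preserved under $G$-translation, and since the set of lines carried by a fixed conjugacy class is closed in $\partial^2(G,\mathcal{G})$, every line of $\Lambda^+$ is carried by $[F']$. Now $[F']$ is, up to conjugation, a free factor of both $F_1$ and $F_2$, so $\mathrm{rank}(F')\le\min\{\mathrm{rank}(F_1),\mathrm{rank}(F_2)\}$. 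Minimality of $F_1$ and $F_2$ forces equality of ranks, and an equal-rank free factor of a free factor must coincide with it up to conjugation; thus $[F']=[F_1]=[F_2]$, contradicting $[F_1]\neq[F_2]$.

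The main obstacle is the meet-and-carry property in the relative setting: in the free group case it is a standard consequence of Stallings' folds applied to covers of the rose, but for a free product $G=G_1\ast\cdots\ast G_k\ast F_p$ one cannot fold freely because of the nontrivial peripheral subgroups. The substitute is to work with the Bass--Serre trees $T_{F_1}$, $T_{F_2}$ that refine a common $\mathcal{G}$-tree and to examine the Guirardel core $\mathcal{C}(T_{F_1},T_{F_2})$, which encodes the common combinatorics of the two covers. Tracking how the lift of $\ell$ projects to each factor yields the desired component of the meet.
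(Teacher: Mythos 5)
The paper itself offers no proof of this lemma: it is quoted directly from Lyman \cite{LymanCTs} (Lemma 4.11), so there is no in-paper argument to compare against. Your outline follows the same route as that cited literature: existence by minimizing Kurosh rank, and uniqueness via the meet of free factor systems together with the fact that a line carried by two free factor systems is carried by their meet (i.e.\ the existence of a well-defined ``free factor support'' of a lamination), which in the relative setting is established by Guirardel--Horbez and by Lyman precisely through the kind of core/folding analysis you gesture at. Two caveats. First, the ``meet-and-carry'' property you isolate is not a routine reduction; it is essentially the substance of the cited result, so as written your argument is an outline resting on that machinery rather than a self-contained proof --- if you invoke it, cite it, and note that one must also check that the components of $[F_1]\wedge [F_2]$ are again free factors compatible with $\mathcal{G}$, which is part of the same theory. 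Second, your concluding step is correct provided ``rank'' is the Kurosh rank: by Grushko additivity a proper free factor of $F_1$ (of the form $G_{i_1}\ast\cdots\ast G_{i_\nu}\ast F_\lambda$) has strictly smaller Kurosh rank, which is what forces $[F']=[F_1]=[F_2]$ and yields uniqueness up to conjugacy. With those references supplied, the proposal is sound and is essentially the argument behind the statement the paper imports.
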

Note that $[F]$ contains at least one non peripheral element since it carries every line in $\Lambda^+$. We denote $[F]$ by $F(\Lambda^+)$. The rank of $F(\Lambda^+)$ is defined to be the rank of $F$.\par
We also need the following generalisations of two technical lemmas from \cite{Tits1}.
\begin{lemma}\label{lem3.2.4}
   For each $\Lambda^+\in \mathcal{L}([\phi])$ there is a unique $\Lambda^-\in \mathcal{L}([\phi]^{-1})$ such that $F(\Lambda^+)=F(\Lambda^-)$.
\end{lemma}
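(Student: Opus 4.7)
The plan is to build $\Lambda^-$ from the action of $[\phi]^{-1}$ on the minimal carrying free factor $F(\Lambda^+)$ and then exploit the uniqueness of stable/unstable laminations for iwip automorphisms of (smaller) free products, which was developed in the preliminaries and in \cite{syrigos2016irreducible}.

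First, since $[\phi]$ acts on the finite set $\mathcal{L}([\phi])$ and, by the preceding lemma cited from \cite{LymanCTs}, the minimal carrying free factor $F(\Lambda^+)$ is uniquely determined by $\Lambda^+$, the conjugacy class $F := F(\Lambda^+)$ is $[\phi]^k$-invariant for some $k \geq 1$. Since $\mathcal{L}([\phi]) = \mathcal{L}([\phi]^k)$ (the iterates of $[\phi]$ merely permute the finite set of attracting laminations) and $\mathcal{L}([\phi]^{-1}) = \mathcal{L}([\phi]^{-k})$, there is no loss of generality in replacing $[\phi]$ by a suitable power and assuming $F$ itself is $[\phi]$-invariant. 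The conjugacy class $F$ inherits a free factor system $\mathcal{G}|_F$ from $\mathcal{G}$, and $[\phi]$ restricts to an element $[\phi]|_F \in \mathrm{Out}(F, \mathcal{G}|_F)$.

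Second, I would argue that $[\phi]|_F$ is iwip relative to $\mathcal{G}|_F$ and of exponential growth. Irreducibility follows from the minimality of $F$: if $[\phi]|_F$ admitted a proper invariant free factor subsystem of $\mathcal{G}|_F$ carrying $\Lambda^+$, then amalgamating with the complementary part of $\mathcal{G}$ would yield a strictly smaller free factor of $G$ carrying $\Lambda^+$, contradicting the definition of $F(\Lambda^+)$. Exponential growth is forced by the second clause in the definition of an attracting lamination: a generic leaf with an attracting neighborhood for the action of an iterate of $[\phi]$ cannot sit inside a polynomially growing irreducible stratum, where the dynamics are by isometries up to bounded error.

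Third, having $[\phi]|_F$ iwip and exponentially growing on $(F,\mathcal{G}|_F)$, the constructive theory of stable laminations recalled above (for iwip outer automorphisms of free products) yields a unique stable lamination for $[\phi]|_F$ and a unique stable lamination for $([\phi]|_F)^{-1}$; by construction $\Lambda^+$ is the former in $F$-coordinates. Let $\Lambda^-_F$ denote the latter, viewed as a lamination on $F$. Pushing $\Lambda^-_F$ forward under the inclusion $F \hookrightarrow G$ produces an element $\Lambda^- \in \partial^2(G,\mathcal{G})$ that is closed, $G$-invariant, birecurrent, has an attracting neighborhood for an iterate of $[\phi]^{-1}$, and is not carried by any $\mathbb{Z}$ or $\mathbb{Z}_2 \ast \mathbb{Z}_2$ free factor (because $[\phi]|_F$ is iwip of exponential growth on $F$). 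Hence $\Lambda^- \in \mathcal{L}([\phi]^{-1})$, and by the minimality argument applied in reverse, $F(\Lambda^-) = F$. This settles existence.

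For uniqueness, suppose $\Lambda^-$ and $\Lambda^{-\prime}$ both lie in $\mathcal{L}([\phi]^{-1})$ with $F(\Lambda^-) = F(\Lambda^{-\prime}) = F$. The same argument as in the second step, applied to $[\phi]^{-1}$ and each of these laminations, shows that $([\phi]|_F)^{-1} = ([\phi]^{-1})|_F$ is iwip of exponential growth on $(F,\mathcal{G}|_F)$, and that $\Lambda^-$ and $\Lambda^{-\prime}$ both restrict to stable laminations of $([\phi]|_F)^{-1}$. The uniqueness of the stable lamination of an iwip outer automorphism of a free product then forces $\Lambda^- = \Lambda^{-\prime}$. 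The main obstacle I expect is the second step — making rigorous that the restriction of $[\phi]$ to $F(\Lambda^+)$ inherits an iwip structure and has exponential growth — since this requires careful bookkeeping of induced free factor systems under restriction to a free factor and a clean transfer of the attracting-neighborhood condition from $(G,\mathcal{G})$ to $(F,\mathcal{G}|_F)$.
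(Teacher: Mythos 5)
There is a genuine gap at the heart of your second step, and the rest of the argument leans on it. From the minimality of $F=F(\Lambda^+)$ you conclude that $[\phi]|_F$ is irreducible (hence iwip) relative to $\mathcal{G}|_F$, arguing that a proper invariant free factor system of $F$ \emph{carrying} $\Lambda^+$ would contradict minimality. But irreducibility requires that there be no proper $[\phi]|_F$-invariant free factor system at all, not merely none that carries $\Lambda^+$. The typical situation the lemma must handle is exactly the opposite one: take a relative train track map on $F$ with an invariant lower stratum (say a fixed loop, or an invariant elliptic factor) and an exponentially growing top stratum whose attracting lamination fills $F$; then $F(\Lambda^+)=F$ while $[\phi]|_F$ is reducible. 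Note also that the lemma is stated and used (in Theorem \ref{main1}) for arbitrary outer automorphisms of exponential growth, not just iwips, so there is no reason to expect the restriction to the minimal carrier to be fully irreducible. Once the iwip claim fails, the appeal to uniqueness of the stable and unstable laminations of $[\phi]|_F$ gives you neither the existence of $\Lambda^-$ with $F(\Lambda^-)=F$ nor its uniqueness, and the "push-forward" step (that a lamination of $([\phi]|_F)^{-1}$ is an attracting lamination for $[\phi]^{-1}$ on $(G,\mathcal{G})$, with the attracting-neighborhood condition transferring) would also need an argument even in the favourable case.

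The paper's proof takes a different, and in this case necessary, route: induction on the Kurosh rank $k+p$. Laminations whose carrier has rank strictly less than $k+p$ are paired by applying the inductive hypothesis to the restriction of $[\phi]$ to the (invariant, after passing to an iterate) carrier --- with no irreducibility claim needed, only the induction. For a lamination of full rank one chooses a reduced, eg-aperiodic relative train track representative of an iterate of $[\phi]$; Lemma \ref{Cor3.2.2} rules out the top stratum being a single edge, so $\mathcal{F}(T_{K-1})$ is neither a corank-one factor nor a pair of factors of full rank sum, and then a relative train track map for an iterate of $[\phi]^{-1}$ realizing $\mathcal{F}$ is forced to have an exponentially growing top stratum whose lamination has full rank. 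Uniqueness at full rank comes from the observation that distinct attracting laminations have no common segments and all lower-rank ones are carried by $\mathcal{F}(T_{K-1})$. If you want to salvage your outline, you would have to replace the iwip reduction by this kind of relative train track analysis; the minimal carrier alone does not control the dynamics of $[\phi]$ below the top stratum.
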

\begin{proof}
     We induct on the Kurosh rank $k+p$ of $G=G_1\ast\ldots\ast G_k\ast F_p$. If $k+p=1$, then there are no
exponentially-growing strata in any relative train track map representing an
iterate of $\phi$ so $\mathcal{L}([\phi])$ is empty.\par
Since $\mathcal{L}([\phi])$ is finite and $[\phi]$-invariant, by replacing $[\phi]$ by an iterate, we may assume that each element of $\mathcal{L}([\phi])$ is $[\phi]$-invariant. Choose $\Lambda^+\in \mathcal{L}([\phi])$. Since $F(\Lambda^+)$ is unique, then it is $[\phi]$-invariant. If the rank of $\Lambda^+$ is less than $k+p$, then  the
inductive hypothesis, applied to the restriction of $[\phi]$ to $F(\Lambda^+)$  implies that there exists a unique $\Lambda^-\in\mathcal{L}([\phi]^{-1})$ such that $F(\Lambda^+)=F(\Lambda^-)$. We may
therefore assume that there is a pairing between the elements of $\mathcal{L}([\phi])$ with rank less than $k+p$ and the the elements of $\mathcal{L}([\phi]^{-1})$ with rank less than $k+p$.\par
For any exponentially-growing stratum $(T_r/G)-(T_{r-1}/G)$ 
there are bi-infinite paths in $T_r/G$ crossing edges in $(T_r/G)-(T_{r-1}/G)$. It follows that the rank of each component of $T_{r-1}/G$ is
less than the rank of $T_r/G$.  In particular, as any two distinct laminations can not have non trivial common segments, there is at most one element of $\mathcal{L}([\phi])$ or $\mathcal{L}([\phi]^{-1})$  with rank $k+p$. It therefore suffices to assume that there is an element $\Lambda^+\in\mathcal{L}([\phi])$ with rank $k+p$ and to prove that there is an element $\Lambda^-\in\mathcal{L}([\phi]^{-1})$ with rank $k+p$.\par 
It follows from \autocite[Proposition~1.3]{LymanCTs} that after replacing $[\phi]$ by a further iterate if necessary, there is a reduced relative train track map $\Tilde{f}:T\to T$ and a filtration $\emptyset=T_0\subseteq T_1\subseteq\ldots\subseteq T_K=T$ representing $[\phi]$. We may also assume, \autocite[Lemma~4.6]{LymanCTs} that, after replacing $[\phi]$ by a further iterate if necessary,  $\Tilde{f}:T\to T$ is an eg-aperiodic relative train track map representing $[\phi]$. Now $\Lambda^+$ is associated to the top stratum $(T_K/G)-(T_{K-1}/G)$ and each element of $\mathcal{L}([\phi])$ and $\mathcal{L}([\phi]^{-1})$ with rank less than $k+p$ is carried by $\mathcal{F}=\mathcal{F}(T_{K-1}/G)$ which is the minimal free factor which carries $T_{K-1}/G$. Lemma \ref{Cor3.2.2}  implies that $(T_K/G)-(T_{K-1}/G)$ is neither a single edge, nor an edge path (unless it contains at least two non free vertices of which at least one does not belong in $T_{K-1}/G$).
It
follows that $\mathcal{F}$ is neither a single (conjugacy class of a) free factor of rank $k+p-1$ nor a pair of (conjugacy classes of) free factors with rank sum equal to $k+p$.\par  
Since $[\phi]$ and $[\phi]^{-1}$ have the same invariant free factors,
we may choose a relative train track map $\Tilde{f}':T\to T$ representing an iterate of $\phi^{-1}$ such that $\mathcal{F}$ is realized by a filtration element. The
transition submatrix for a non-exponentially-growing stratum is a permutation and so has an iterate that equals the identity. We may therefore assume,
after replacing $\Tilde{f}'$ by an iterate and enlarging the filtration if necessary, that
each non-exponentially-growing stratum $(T_i/G)-(T_{i-1}/G)$ is a single edge. Recall that $\Tilde{f}$ is eg-aperiodic. If $(T'_{K'}/G)-(T'_{K'-1}/G)$ is the topmost stratum then,
since $\Tilde{f}:T\to T$ is reduced, $\mathcal{F}=\mathcal{F}(T'_{K'-1}/G)$.  Since  $(T'_{K'}/G)-(T'_{K'-1}/G)$ cannot be a zero
stratum, the concluding observation of the preceding paragraph rules out the
possibility that it is a single edge. Thus $(T'_{K'}/G)-(T'_{K'-1}/G)$ is exponentially growing. Since
the expanding lamination associated to  $(T'_{K'}/G)-(T'_{K'-1}/G)$ is not carried by $\mathcal{F}'$ it must have rank $k+p$.
 \end{proof}
The laminations $\Lambda^+,\Lambda^-$ of the above lemma will be called \textbf{paired}, as in the case of automorhpisms of free groups.
\begin{lemma}\label{lem3.4.2}
    Suppose that:
    \begin{enumerate}
        \item $\Lambda^+\in \mathcal{L}([\phi])$ and $\Lambda^-\in\mathcal{L}([\phi]^{-1})$ are paired and $[\phi]$-invariant. 
        \item $\Gamma^+\in \mathcal{L}([\psi])$ and $\Gamma^-\in\mathcal{L}([\psi]^{-1})$ are paired and $[\psi]$-invariant. 
        \item Generic lines in $\Lambda^+,\Lambda^-$ are weakly attracted to $\Gamma^+$ (respectively $\Gamma^-$) under the action of $[\psi]$ (respectively $[\psi]^{-1}$).
        \item Generic lines in $\Gamma^+,\Gamma^-$ are weakly attracted to $\Lambda^+$ (respectively $\Lambda^-$) under the action of $[\phi]$ (respectively $[\phi]^{-1}$).
    \end{enumerate}
    Then $[\phi]^N$ and $[\psi]^N$ generate a free subgroup of rank two for all sufficiently large $N$.
\end{lemma}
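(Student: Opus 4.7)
The plan is to apply the classical ping-pong lemma to the action of $\langle [\phi]^N, [\psi]^N\rangle$ on $\partial^2(G,\mathcal{G})$, using four pairwise disjoint attracting sets associated to the four laminations. By the definition of attracting lamination, fix attracting neighborhoods $U^+, U^-, V^+, V^-$ of $\Lambda^+, \Lambda^-, \Gamma^+, \Gamma^-$ for the actions of suitable iterates of $[\phi], [\phi]^{-1}, [\psi], [\psi]^{-1}$ respectively. The four laminations must be pairwise distinct (otherwise the pairing of Lemma \ref{lem3.2.4} combined with hypotheses (3)--(4) becomes incompatible with weak attraction being a nontrivial dynamical property), so after iterating, i.e.\ replacing $U^+$ by $[\phi]^k_\# U^+$ and similarly for the others, we may further assume that $U^+, U^-, V^+, V^-$ are pairwise disjoint open sets in the observer's topology.

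The key step is to refine the neighborhoods so that the ping-pong inclusions hold. I claim that, after further shrinking, there exists a single $k_0$ with $[\phi]^{k_0}_\# V^+ \subseteq U^+$. To prove this, pick a generic line $\ell^+$ of $\Gamma^+$; by hypothesis (4) there exists $k_0$ with $[\phi]^{k_0}_\#(\ell^+) \in U^+$. Since $U^+$ is open and $[\phi]^{k_0}_\#$ is a homeomorphism of $\partial^2(G,\mathcal{G})$, there is an open neighborhood $W$ of $\ell^+$ with $[\phi]^{k_0}_\#(W) \subseteq U^+$. Because $\ell^+$ is generic for $\Gamma^+$, the family $\{[\psi]^m_\# V^+\}_{m \geq 0}$ forms a neighborhood basis at $\ell^+$, so $[\psi]^m_\# V^+ \subseteq W$ for some $m$; replacing $V^+$ by $[\psi]^m_\# V^+$ (which is still an attracting neighborhood of $\Gamma^+$ for $[\psi]$, and is still disjoint from the other three sets after possibly one more pass of shrinking) gives $[\phi]^{k_0}_\# V^+ \subseteq U^+$. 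The nested property $[\phi]_\# U^+ \subseteq U^+$ then upgrades this to $[\phi]^k_\# V^+ \subseteq U^+$ for all $k \geq k_0$. Repeating the same construction for the three remaining pairs $(V^-, U^-)$ under $[\phi]^{-1}$, $(U^+, V^+)$ under $[\psi]$, and $(U^-, V^-)$ under $[\psi]^{-1}$, using hypotheses (3) and (4) in each case, produces analogous constants $k_1, k_2, k_3$.

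Taking $N \geq \max\{k_0, k_1, k_2, k_3\}$, we obtain the inclusions
\[
[\phi]^{\pm N}_\#(V^+ \cup V^-) \subseteq U^{\pm}, \qquad [\psi]^{\pm N}_\#(U^+ \cup U^-) \subseteq V^{\pm}.
\]
Since $U^+, U^-, V^+, V^-$ are pairwise disjoint, the ping-pong lemma applied with "tables" $U^+ \cup U^-$ and $V^+ \cup V^-$ yields that $\langle [\phi]^N, [\psi]^N\rangle$ is free of rank two, as desired.

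The main obstacle I expect is the uniform attraction step, namely upgrading the pointwise weak-attraction hypothesis at a single generic line to the statement that one iterate $[\phi]^{k_0}_\#$ maps an entire subneighborhood into $U^+$. This requires simultaneous use of openness of $U^+$ in the observer's topology and the attracting-basis structure of $V^+$ under $[\psi]$ at each of its generic lines; the rest of the proof is then a routine assembly into the ping-pong framework.
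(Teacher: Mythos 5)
The first half of your argument (upgrading pointwise weak attraction of a generic leaf to an inclusion $[\phi]^{k_0}_{\#}V^+\subseteq U^+$ after replacing $V^+$ by a forward $[\psi]_{\#}$-iterate, and symmetrically for the other three pairs) is essentially the paper's first step and is fine. The genuine gap is the sentence ``after iterating \dots we may further assume that $U^+,U^-,V^+,V^-$ are pairwise disjoint,'' on which your entire ping-pong rests. Disjointness cannot be arranged, no matter how much you shrink: the topology in which weak attraction is formulated is convergence of lines only up to $G$-translation (equivalently, convergence of the projected lines in the quotient graph of groups), and in that setting the basic neighborhoods are the sets $N(\gamma)$ of lines crossing a given finite tight path $\gamma$. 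Since the quotient graph is connected, for any two finite paths $\gamma_1,\gamma_2$ there is a reduced bi-infinite line crossing both, so \emph{every} neighborhood of a generic leaf of $\Lambda^{\pm}$ meets \emph{every} neighborhood of a generic leaf of $\Gamma^{\pm}$; even the weaker requirement that $U^+\cup U^-$ and $V^+\cup V^-$ be disjoint fails. Distinctness of the four laminations (which is true) does not help, because this space of lines is badly non-Hausdorff. Working instead with non-$G$-invariant neighborhoods upstairs in $\partial^2 T$ does not rescue the argument either, since the weak-attraction hypotheses only give convergence up to translation and would no longer yield your inclusion $[\phi]^{k_0}_{\#}V^+\subseteq U^+$ for a fixed leaf.

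This is exactly why the paper does not use the classical ping-pong lemma with disjoint tables but verifies the hypotheses of the refined ping-pong lemma of Bestvina--Feighn--Handel (Lemma~3.4.1 of \cite{Tits1}), which replaces disjointness by an auxiliary point. Concretely, after establishing the same system of inclusions you derive (together with the nesting $[\phi]^{N}_{\#}U_L^+\subseteq U_L^+$, etc.), the paper chooses a periodic line $x=[\phi]^M_{\#}(\delta)$ coming from a nonperipheral element $g$, uses the fact that generic leaves of attracting laminations are never periodic (Remark~\ref{rem4.4}) to arrange $x\notin U_L^+\cup U_L^-\cup V_L^+\cup V_L^-$, and checks that $[\phi]^{\pm N}_{\#}(x)$ and $[\psi]^{\pm N}_{\#}(x)$ land in the corresponding four sets; a nontrivial reduced word in $[\phi]^N,[\psi]^N$ then sends $x$ into one of the four sets and hence cannot be the identity. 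To repair your proof you need this auxiliary-point mechanism (or some substitute for the missing disjointness); as written, the final appeal to the classical ping-pong lemma does not go through.
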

\begin{proof}
    Let $\lambda^{\pm}$ and $\gamma^{\pm}$ be generic lines for $\Lambda^{\pm}$ and $\Gamma^{\pm}$ respectively. Let, also, $U^{\pm}$ and $V^{\pm}$ be
attracting neighborhoods of $\lambda^{\pm}$ and $\gamma^{\pm}$ respectively. \par There exists $k\ge0$ such that $[\phi]^k_{\#}(\gamma^+)\subseteq U^+$. Since $\{[\psi]^l_{\#}(V^+):\,l\ge0\}$ is a neighborhood basis for $\gamma^+$, there exists $l\ge0$ such that $[\phi]^k_{\#}(\{[\psi]^l_{\#}(V^+))\subseteq U^+$. This
inclusion remains valid if $k$ and/or $l$ are increased.\par 
Repeating this argument on various combinations of $[\psi]^{\pm1}$ and $[\psi]^{\pm1}$, we see
that for all sufficiently large $K$ and $L$ we have
\begin{itemize}
    \item $[\phi]^K_{\#}([\psi]^L_{\#}(V^+)),[\phi]^K_{\#}([\psi]^{-L}_{\#}(V^-))\subseteq U^+$,
    \item $[\phi]^{-K}_{\#}([\psi]^L_{\#}(V^+)),[\phi]^{-K}_{\#}([\psi]^{-L}_{\#}(V^-))\subseteq U^-$,
    \item $[\psi]^K_{\#}([\phi]^L_{\#}(U^+)),[\phi]^K_{\#}([\psi]^{-L}_{\#}(U^-))\subseteq V^+$,
     \item $[\psi]^{-K}_{\#}([\phi]^L_{\#}(U^+)),[\phi]^{-K}_{\#}([\psi]^{-L}_{\#}(U^-))\subseteq V^-$.
\end{itemize}
Defining $U_L^+$ (respectively $U_L^-,V_L^+,V_L^-$) by $[\phi]^L_{\#}(U^+)$  (respectively $[\phi]^L_{\#}(U_L^-),[\psi]_{\#}^L(V^+),[\psi]_{\#}^{-L}(V^-)$)  and defining $N=K+L$, we have
\begin{enumerate}
    \item $[\phi]^N_{\#}(V_L^+),[\phi]^N_{\#}(V_L^-)\subseteq U_L^+$,
    \item  $[\phi]^{-N}_{\#}(V_L^+),[\phi]^{-N}_{\#}(V_L^-)\subseteq U_L^-$,
    \item $[\psi]^N_{\#}(U_L^+),[\psi]^N_{\#}(U_L^-)\subseteq V_L^+$,
    \item $[\psi]^{-N}_{\#}(U_L^+),[\psi]^{-N}_{\#}(U_L^-)\subseteq V_L^-$.
\end{enumerate}
Since $U^{\pm}$ and $V^{\pm}$ are attracting neighborhoods, we also have
\begin{enumerate}
    \item[5.] $[\phi]^N_{\#}(U_L^+)\subseteq U_L^+, [\phi]^{-N}_{\#}(U_L^-)\subseteq U_L^-, [\psi]^N_{\#}(V_L^+)\subseteq V_L^+,  [\psi]^{-N}_{\#}(V_L^-)\subseteq V_L^-$. 
\end{enumerate}
Choose a hyperbolic element $g\in G$ 
 and $\delta=[x,gx]$, a fundamental domain of the action of $g$ on $T$, that is weakly attracted to $\Lambda^+$ under the action
of $[\phi]$ and to $\Lambda^-$ under the action of $[\phi]^{-1}$.
By $(3)$ and $(4)$, $[\phi]_{\#}^m(\delta)$ is weakly attracted to $\Gamma^+$ (respectively $\Gamma^-$)  under the action of $[\psi]$ (respectively $[\psi]^{-1}$) for all
sufficiently large $m$, say $m\ge M$. Let $x=[\phi]_{\#}^M(\delta)\in \mathcal{B}$. 
Since  
generic lines in attracting laminations cannot be periodic lines we can choose $L$ so that $x\not\in (U_L^{+}\cup U_L^{-}\cup V_L^{+}\cup V_L^{-})$. 
For large $K$ and hence large $N$, $[\phi]_{\#}^N(x)\in U_L^+$, $[\phi]_{\#}^{-N}(x)\in U_L^-$, $[\psi]_{\#}^N(x)\in V_L^+$, $[\psi]_{\#}^{-N}(x)\in V_L^-$. \par
Thus all hypotheses of \autocite[Lemma~3.4.1]{Tits1} are verified.

\end{proof}

\begin{theorem}\label{main1}
Let $[\phi]\in\mathrm{Out}(G,\mathcal{G})$ be an outer automorphism of exponential growth and let $\Lambda^+\in \mathcal{L}([\phi])$ and  $\Lambda^-\in \mathcal{L}([\phi]^{-1})$ be two paired
and $[\phi]$-invariant laminations. Suppose that $H$ is a subgroup of $\mathrm{Out}(G,\mathcal{G})$ containing $[\phi]$ and that there
is an element $[\psi]\in H$ of exponential growth such that generic lines of the four laminations $[\psi]^{\pm1}(\Lambda^{\pm})$ are weakly attracted to $\Lambda^+$ under the action of $[\phi]$ and to $\Lambda^-$ under the action of $[\phi]^{-1}$. Then $H$ contains a free subgroup of rank two. 
\end{theorem}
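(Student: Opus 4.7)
The plan is to reduce to Lemma~\ref{lem3.4.2} by producing an element of $H$ whose paired invariant attracting laminations can play the role of $\Gamma^{\pm}$ in that lemma. The natural choice is the conjugate
\[
[\theta]:=[\psi]\cdot[\phi]\cdot[\psi]^{-1}\in H,
\]
together with the laminations $\Gamma^{+}:=[\psi](\Lambda^{+})$ and $\Gamma^{-}:=[\psi](\Lambda^{-})$. Since exponential growth is a conjugacy invariant, $[\theta]$ is of exponential growth.

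First I would verify that $\Gamma^{\pm}$ are paired, $[\theta]^{\pm 1}$-invariant attracting laminations for $[\theta]^{\pm 1}$. Invariance is the direct computation
\[
[\theta](\Gamma^{+})=[\psi][\phi][\psi]^{-1}\cdot[\psi](\Lambda^{+})=[\psi][\phi](\Lambda^{+})=[\psi](\Lambda^{+})=\Gamma^{+},
\]
and analogously for $\Gamma^{-}$. Membership $\Gamma^{+}\in\mathcal{L}([\theta])$, $\Gamma^{-}\in\mathcal{L}([\theta]^{-1})$ follows because the homeomorphic action of $[\psi]$ on $\partial^{2}(G,\mathcal{G})$ sends birecurrent lines to birecurrent lines and carries an attracting neighbourhood for $[\phi]$ to one for $[\theta]$. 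The pairing of $\Gamma^{\pm}$ in the sense of Lemma~\ref{lem3.2.4} is then inherited from the pairing of $\Lambda^{\pm}$ because $F(\Gamma^{\pm})=[\psi]\bigl(F(\Lambda^{\pm})\bigr)$ and $F(\Lambda^{+})=F(\Lambda^{-})$.

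Next I would check the attraction conditions (3) and (4) of Lemma~\ref{lem3.4.2}. Condition~(4) is essentially the hypothesis: a generic line of $\Gamma^{+}=[\psi](\Lambda^{+})$ is the $[\psi]_{\#}$-image of a generic line of $\Lambda^{+}$, and by assumption such lines are weakly attracted to $\Lambda^{+}$ under $[\phi]$ and to $\Lambda^{-}$ under $[\phi]^{-1}$; the analogous statement for $\Gamma^{-}$ is identical. For condition~(3), given a generic line $\ell$ of $\Lambda^{+}$, the line $[\psi]_{\#}^{-1}(\ell)$ is generic for $[\psi]^{-1}(\Lambda^{+})$, so by hypothesis $[\phi]_{\#}^{n}\bigl([\psi]_{\#}^{-1}(\ell)\bigr)$ converges to a generic line of $\Lambda^{+}$. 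Applying $[\psi]_{\#}$ and using the identity $[\theta]_{\#}^{n}=[\psi]_{\#}\circ[\phi]_{\#}^{n}\circ[\psi]_{\#}^{-1}$ shows that $[\theta]_{\#}^{n}(\ell)\to\Gamma^{+}$. The symmetric argument with $[\phi]^{-1}$ in place of $[\phi]$ and $\Lambda^{-}$ in place of $\Lambda^{+}$ handles the negative direction.

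Once these hypotheses are verified, Lemma~\ref{lem3.4.2} yields that $[\phi]^{N}$ and $[\theta]^{N}$ freely generate a rank-two subgroup of $\mathrm{Out}(G,\mathcal{G})$ for all sufficiently large $N$; since $[\phi],[\psi]\in H$ and $[\theta]$ is a word in these, $\langle[\phi]^{N},[\theta]^{N}\rangle\leq H$, completing the argument. The step I expect to require the most care is the verification that $[\psi]_{\#}$ transports the full structure of an attracting lamination (birecurrence, attracting neighbourhood, minimal carrying free factor) from $[\phi]$ to $[\theta]$ and preserves the pairing of Lemma~\ref{lem3.2.4}; this is formal once one works from the definitions of Section~\ref{prelim}, but it is the only place where naturality of the $\mathrm{Out}(G,\mathcal{G})$-action on $\mathcal{IL}$ must be used explicitly.
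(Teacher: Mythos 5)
Your proposal follows essentially the same route as the paper: it conjugates to $\Phi=[\psi][\phi][\psi]^{-1}$, takes $\Gamma^{\pm}=[\psi]_{\#}(\Lambda^{\pm})$ as the paired $\Phi$-invariant laminations, restates the hypotheses as the attraction conditions (3) and (4) of Lemma~\ref{lem3.4.2} (your conjugation identity $[\theta]_{\#}^{n}=[\psi]_{\#}[\phi]_{\#}^{n}[\psi]_{\#}^{-1}$ is exactly the paper's equivalence step), and concludes by that lemma. The argument is correct; you merely spell out the verification that $\Gamma^{\pm}$ are paired attracting laminations, which the paper asserts without detail.
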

\begin{proof}
    Define $\Phi=[\psi][\phi][\psi]^{-1}$ and note that $\Gamma^+=[\psi]_{\#}(\Lambda^+)\in\mathcal{L}(\Phi^+)$ and $\Gamma^-=[\psi]_{\#}(\Lambda^-)\in \mathcal{L}(\Phi^{-1})$ are paired and $\Phi$-invariant.  Our hypothesis on the generic lines of $[\psi]_{\#}^{\pm1}(\Lambda^{\pm})$ can be restated as:
    \begin{itemize}
        \item $\Gamma^{\pm}$ is weakly attracted to $\Lambda^+$ (respectively $\Lambda^-$) under the action of $[\phi]$ (respectively $[\phi]^{-1}$).
        \item $[\psi]^{-1}_{\#}(\Lambda^{\pm})$ is weakly attracted to $[\psi]^{-1}_{\#}(\Gamma^+)$ (respectively $[\psi]^{-1}_{\#}(\Gamma^-)$) under the action of $[\psi]^{-1} \Phi [\psi]$ (respectively $[\psi]^{-1}\Phi^{-1}[\psi]$). 
    \end{itemize}
    This last condition is equivalent to:
\begin{itemize}
    \item $\Lambda^{\pm}$ is weakly attracted to $\Gamma^+$ (respectively $\Gamma^-$)  under the action of $\Phi$ (respectively $\Phi^{-1}$).
\end{itemize}
The result now follows from Lemma \ref{lem3.4.2}.
\end{proof}

\section{Subgroups carrying laminations}\label{subgroupssection4}
We fix a group $G$, a finite (non-trivial) free product decomposition of $G=G_1\ast\ldots\ast G_k\ast F_p$, the corresponding relative outer space $\mathcal{O}=\mathcal{O}(\mathcal{G})$ and an iwip automorphism $[\phi]\in\mathrm{Out}(G,\mathcal{G})$.

\begin{proposition}[{{\cite[Lemma 3.9, point (c)]{loxodromic}}}, {{\cite[Lemma 5.3.1]{Francaviglia2021OnTA}}}]\label{finiteindex} 
If $A$ is a subgroup of $G$ of finite Kurosh rank which carries $\Lambda_{[\phi]}^+$ and $A\cap xG_ix^{-1}=\{1\}$ or $xG_ix^{-1}$ for every elliptic free factor $G_i$ of $G$, then $A$ has finite index in $G$.   
\end{proposition}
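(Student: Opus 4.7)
The plan is to argue by contradiction, assuming $A$ has infinite index in $G$ and deriving an impossibility from the combined constraints that $A$ carries the stable lamination of an iwip while being of finite Kurosh rank.

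First, I would fix an eg-aperiodic train track representative $\tilde{f}\colon T\to T$ for some iterate of $[\phi]$ (available by \autocite[Lemma~4.6]{LymanCTs}), set $\Gamma=T/G$, and form the $A$-quotient $X=T/A$. The intersection hypothesis that $A\cap xG_ix^{-1}$ is either trivial or all of $xG_ix^{-1}$ guarantees that every vertex stabiliser of $X$ is either trivial or a full conjugate of some $G_i$, so the natural projection $\pi\colon X\to\Gamma$ is locally bijective at non-free vertices --- a cover of graphs of groups rather than just a proper immersion. The finite-Kurosh-rank hypothesis, via Kurosh's theorem, decomposes $A=A_1\ast\cdots\ast A_\ell\ast F_q$ with each $A_j$ $G$-conjugate to some $G_{i_j}$; in particular the minimal $A$-invariant subtree $T_A\subseteq T$ has finite quotient $Y=T_A/A$, which is a finite subgraph of groups of $X$.

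The next step exploits the dynamical input. Since $A$ carries $\Lambda_{[\phi]}^{+}$, every leaf of $\Lambda^{+}$ is approximated by axes of nonperipheral elements of $A$; these axes lie in $T_A$, so every leaf (after translation by an element of $G$) lifts to a bi-infinite line in $T_A$ and thereby projects into the finite graph $Y$. On the other hand, the iwip property combined with eg-aperiodicity of $\tilde{f}$ forces the generic leaves of $\Lambda^{+}$ to cross every edge of $\Gamma$ in both orientations infinitely often and to realise every legal turn at every vertex of $\Gamma$ infinitely often; this is the standard Perron--Frobenius argument together with the quasi-periodicity of leaves noted above. Combining these, the finite core $Y$ must map onto $\Gamma$ in a way that realises every edge-germ at every vertex, and together with local bijectivity at non-free vertices a local-to-global argument in the covering theory of graphs of groups promotes $\pi|_Y\colon Y\to\Gamma$ to an honest finite-sheeted covering map. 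Hence $[G:A]=\deg(\pi|_Y)<\infty$, contradicting the assumption of infinite index.

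The hard part is the final step: passing from ``every turn of $\Gamma$ is realised somewhere in $Y$'' to ``every vertex of $Y$ has full link matching its image in $\Gamma$''. A priori the turns realised by leaves could be distributed across distinct preimage vertices of a given vertex of $\Gamma$, so collective fullness need not imply pointwise fullness. It is the combination of the intersection hypothesis (which fixes the local structure at each non-free vertex) and the minimality of $T_A$ (which together with Perron--Frobenius aperiodicity forces every edge-germ incident to $Y$ to be used by some lifted leaf) that upgrades collective to pointwise fullness and yields the covering conclusion.
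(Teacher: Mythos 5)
First, note that the paper does not actually prove Proposition \ref{finiteindex}: it is imported as a black box from \cite{loxodromic} and \cite{Francaviglia2021OnTA}, so there is no in-paper argument to compare yours against; your attempt has to stand on its own, and as written it does not. The skeleton is the right one (pass to the core $Y=T_A/A$, which is a finite graph of groups by the finite Kurosh rank hypothesis; realise the leaves of $\Lambda_{[\phi]}^+$ in $Y$; use Perron--Frobenius to see that leaves cross every edge of $\Gamma$; conclude that $Y\to\Gamma$ is a finite-sheeted covering, hence $[G:A]<\infty$). But the decisive step --- upgrading the edge-surjective immersion $Y\to\Gamma$ to a covering, i.e.\ ruling out a vertex of $Y$ whose link misses a direction of its image vertex --- is exactly the content of the proposition, and you do not prove it. Your own last paragraph concedes the difficulty and then resolves it by assertion: ``minimality of $T_A$'' plus aperiodicity cannot do this job, since the core of \emph{any} finitely generated infinite-index subgroup is minimal and typically has incomplete links (already for $\langle a\rangle\le F_2$); and your observation that every germ \emph{incident to} $Y$ is used by a lifted leaf says nothing about germs of $\Gamma$ that are absent from $Y$. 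What is needed, and missing, is an argument that a missing germ is incompatible with $Y$ carrying the full lamination of an iwip --- e.g.\ producing from it a proper $(G,\mathcal{G})$-free factor or lower-complexity object carrying $\Lambda_{[\phi]}^+$ and contradicting irreducibility; this is the substance of the cited lemmas.

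Two further inaccuracies feed into that gap. The claim that the intersection hypothesis makes $T/A\to\Gamma$ ``locally bijective at non-free vertices'' is wrong in the case $A\cap xG_ix^{-1}=\{1\}$ with $G_i\neq\{1\}$: there the link map at such a vertex is the quotient by the $xG_ix^{-1}$-action, far from bijective. What the hypothesis really buys is that the vertex groups of the core are either trivial or whole conjugates of the $G_i$'s, so that $Y$ is a $\mathcal{G}$-compatible graph of groups and ``finite covering $\Rightarrow$ finite index'' makes sense; it does not by itself give any local bijectivity. Second, generic leaves need not realise \emph{every} legal turn at every vertex: they realise only the turns taken by iterated images of edges under the train track map, and there may be legal turns that are never taken; fortunately your argument only needs that leaves cross every edge, but the stronger claim should not be invoked. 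Finally, the step ``the leaf is approximated by axes of conjugates of elements of $A$, hence lies in a translate of $T_A$'' deserves a word (the approximating axes live in varying translates $gT_A$; one uses that the set of lines realised in the \emph{finite} core is closed), though this is standard and minor compared with the main gap above.
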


In the special case of free groups the following lemma has been proved in \autocite[Proposition~2.6]{Lam}. We give a proof in the more general case of free products for reasons of completeness. 
\begin{proposition}\label{prp2.6bestvina}
Let $[\phi]\in \mathrm{Out}(G)$ be an iwip outer automorphism and $\Lambda=\Lambda_{[\phi]}^+$ its associated stable lamination. Let also $[\psi]\in \mathrm{Stab}(\Lambda)$ and $\Tilde{h}:T\to T$ be a relative
train-track representative of $[\psi]$. Then:
\begin{enumerate}
    \item If $T_0\subseteq T$ is a proper, $\Tilde{h}$-invariant $G$-subtree that is a union of strata and such that $T_0/G$ has no valence one free vertices, then there is $k\in \mathbb{N}$ such that $\Tilde{h}^k:T_0\to T_0$ is an isometry.
    \item If $\Tilde{h}$ has no exponentially growing strata, then there is $k\in \mathbb{N}$ such that $\Tilde{h}^k$ is an isometry.
\end{enumerate}
\end{proposition}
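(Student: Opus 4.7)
I combine two properties of $\Lambda = \Lambda_{[\phi]}^+$ for iwip $[\phi]$: (a) its leaves, realized in $T$-coordinates as $\Lambda_{\Tilde h}(T)$ via an $\mathcal{O}$-map from a train-track representative of $[\phi]$, are quasi-periodic and nonperiodic; and (b) $\Lambda$ is not carried by any proper free factor system of $G$. Since $[\psi]\in\mathrm{Stab}(\Lambda)$, for every leaf $\ell$ of $\Lambda$ the reduced image $[\Tilde h\circ\ell]$ is again a leaf of $\Lambda$, and by Lemma~\ref{lem2.20} the $\Tilde h$-image of a leaf segment is a leaf segment up to bounded endpoint error.

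For (1), the hypothesis on $T_0$ makes $\mathcal F(T_0)$ a proper free factor system of $G$, so $\Lambda$ is not carried by $\mathcal F(T_0)$; hence no leaf lies entirely in $T_0$, and quasi-periodicity furnishes a uniform upper bound $L'$ on the length of any leaf segment contained in $T_0$. Suppose toward a contradiction that no $\Tilde h^k|_{T_0}$ is an isometry. Then $T_0$ contains a stratum $H_r$ with $|\Tilde h^n(\widetilde e)|\to\infty$ for some $\widetilde e\in H_r$. Using the relative train-track structure one produces a leaf segment $P\subseteq T_0$ crossing $H_r$ along a legal subpath; because $T_0$ is $\Tilde h$-invariant and legality rules out cancellation, $[\Tilde h^n(P)]\subseteq T_0$ has unbounded length. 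Modulo bounded endpoint error, $[\Tilde h^n(P)]$ is still a leaf segment of $\Lambda$ contained in $T_0$, contradicting $L'$.

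For (2), I induct on $r$ to show $\Tilde h^k|_{T_r}$ is an isometry for some $k$. After an initial iteration making each stratum's transition submatrix the identity, every edge $\widetilde e\in H_r$ satisfies $[\Tilde h^k(\widetilde e)] = g_e\widetilde e\cdot u_e$ with $u_e$ a reduced path in $T_{r-1}$. The case $r=0$ is trivial. Inductively assuming $\Tilde h^k|_{T_{r-1}}$ is an isometry, suppose some $u_e$ is nontrivial. Iterating gives $|\Tilde h^{nk}(\widetilde e)| = 1 + n|u_e| + O(1)$, so in $\Tilde h^{nk}(\ell)$ each $H_r$-edge is surrounded by a lower-stratum tail of length $\sim n|u_e|$; thus the density of $H_r$-edges along $\Tilde h^{nk}(\ell)$ tends to $0$. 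But $\Tilde h^{nk}(\ell)$ is still a leaf of $\Lambda$, and because leaves cross $H_r$ (else $\Lambda$ would be carried by $\mathcal F(T_{r-1})$), quasi-periodicity forces a uniform positive lower bound on the density of $H_r$-edges along every leaf of $\Lambda$, the desired contradiction. Hence every $u_e$ is trivial and $\Tilde h^k|_{T_r}$ is an isometry. The main obstacle in both parts is the fine control required to produce, and to maintain under iteration, leaf segments witnessing the growth of $H_r$ without being eaten by bounded cancellation; this forces a careful combination of quasi-periodicity with the relative train-track structure of $\Tilde h$.
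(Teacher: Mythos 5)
The central gap is the step you yourself flag but do not resolve: in both parts you assert that $[\Tilde{h}^{n}(P)]$ is, ``modulo bounded endpoint error,'' again a leaf segment, uniformly in $n$. Bounded cancellation and Lemma \ref{lem2.20} control a \emph{single} application of an $\mathcal{O}$-map; the constant for $\Tilde{h}^{n}$ grows with $n$, so the cancellation of $[\Tilde{h}^{n}(P)]$ against the images of the adjacent portions of the leaf is not uniformly bounded and can eat exactly the long piece that is supposed to witness the growth of $H_r$. Moreover the leaf $\ell$ belongs to $\Lambda^+_{[\phi]}$, which is built from a train track representative of $[\phi]$, so there is no reason its crossings of $H_r$ are $r$-legal for $\Tilde{h}$ beyond single edges, and producing and then propagating legal leaf segments under iteration is precisely the hard point. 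The paper's proof of part (1) does not attempt this: it shows the $T_0$-pieces of a leaf are Nielsen preperiodic, assembles the finitely many $T_0$- and $(T\setminus T_0)$-segments into an immersion $X''\to T$ which Proposition \ref{finiteindex} forces to be a finite-sheeted covering, deduces that the iterated lengths of hyperbolic elements carried by $T_0$ stay bounded, and only then concludes that the transition submatrix of $T_0$ has a trivial power. (Your preliminary reduction ``no power of $\Tilde{h}|_{T_0}$ is an isometry $\Rightarrow$ some edge has unbounded iterated length'' also needs an argument, e.g.\ for edges mapping entirely into other strata with bounded images.)

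For part (2) the density argument is false as stated. If consecutive occurrences of the top edge in a leaf come with opposite orientations, $\ldots\widetilde{e}\cdot c\cdot\overline{\widetilde{e}}\ldots$, then upon tightening $\Tilde{h}^{nk}(\ell)$ the growing lower-stratum tails attached to $\widetilde{e}$ and to $\overline{\widetilde{e}}$ cancel against each other, so the density of $H_r$-edges in $[\Tilde{h}^{nk}(\ell)]$ need not tend to $0$; indeed, for $[\psi]\in\ker(\sigma)$ leaf-segment lengths are essentially preserved, so no density collapse can occur. This is exactly the case the paper isolates (``cancellation of the $S_0$-part''): it first shows that leaves are forced into the special alternating form $\ldots\widetilde{e}\,\widetilde{c_i}\,\overline{\widetilde{e}}\,\widetilde{x_i}\ldots$, and then excludes a nontrivial twisting path $\widetilde{a}$ not by any growth or density count but by a carrying argument --- the alternating form shows $\Lambda$ is carried by the graph of groups spanned by $H_0$ and the loop $a$ joined along $e$, Proposition \ref{finiteindex} makes this carrier of finite index, and counting lifts of $e$ (one) against lifts of the edges crossed by $a$ (two) forces $a$ to be trivial. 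Your inductive step silently skips this scenario, so the proposal as written does not prove either part; the quasiperiodicity-plus-growth idea is the right starting point, but the finite-index/covering arguments via Proposition \ref{finiteindex} (or a substitute for them) are what actually close both proofs.
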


\begin{proof}
\begin{enumerate}
    \item Let $\ell$ be a line of $\Lambda$. By Proposition \ref{finiteindex} $\ell$ is a concatenation of nondegenerate segments in $T_0$ and $T\setminus T_0$. Notice that all of the segments in $T_0$ are Nielsen preperiodic or otherwise some $\Tilde{h}$-iteration would produce an arbitrarily long line segment contained in $T_0$ contradicting irreducibility of $[\phi]$ since $\ell$ is quasiperiodic. Again by quasiperiodicity, there is an upper bound to the length of both $T_0$ and $T\setminus T_0$-segments and hence only finitely many projectively inequivalent segments occur. Let $X$ be the disjoint union of the $G$-orbits of both $T_0$ and $T\setminus T_0$-line segments and $X\to T$ the natural immersion. We identify $G$-equivariantly two endpoints of line segments in $X$ if their images under the above immersion coincide in $T/G$. 
     Then we fold to convert the resulting map to an immersion $X'\to T$. If $v$ is a vertex of $X'$ and there are $g_1,g_2\in G$ such that $g_1g_2^{-1}\in G_{v}$ and $v=i(g_1e)=\tau(g_2e)$ then, from $X'$ we get an immersion $X''\to T$ where $X''$ contains the full (i.e. some conjugates of the $G_i$'s) stabilisers of vertices whenever the corresponding  stabiliser in $X'$ is not trivial. Equivalently, $X'/G$ is not necessarily a graph of groups whereas $X''/G$, by construction, is a graph of groups since we have filled all of the non trivial vertex "groups". In other words, $X'/G$ does not necessarily correspond to a free factor in $\mathcal{G}$ but $X''/G$ corresponds to the minimal free factor in $\mathcal{G}$ that carries the lamination. Since $\ell$ lifts to $X''$ by construction, the immersion $\pi:X''\to T$ must be, from Lemma \ref{finiteindex}, a finite-sheeted covering space. Therefore, a power of any hyperbolic element in $T_0$, say $\alpha$, lifts to $X''$. 
     It follows that $\alpha$ splits into finitely many subpaths each of which belongs to $\ell$ and therefore the set $\{\text{length}([\Tilde{h}^k(\alpha)]),\ k\in\mathbb{N}\}$ is finite (uniformly bounded by some $M>0$). As we may choose the initial hyperbolic element to be cyclically reduced, the same set is finite for any hyperbolic element. It is easy to see that the bottom stratum contained in $T_0$ has trivial corresponding submatrix in some power, since the restriction of the representative is irreducible without growth. It follows by induction that the transition submatrix corresponding to $T_0$ has a trivial power as otherwise we would have a hyperbolic element whose length after $\Tilde{h}$-iteration grows to infinity. 
    
    \item Let $T_0\subseteq T_1\subseteq \ldots \subseteq T_m=S$ be the filtration corresponding to the relative train
track map $\Tilde{h}$. Let’s consider the maximal $\Tilde{h}$-invariant sub-forest $S'=T_1\cup\ldots\cup T_{m-1}$.  By applying $1$, we can assume that some power of $\Tilde{h}$ acts as an isometry on $S'$.\par
By applying again $1$, we can assume that, after possibly changing $[\psi]$ with some power $[\psi]^k$, there is a relative train track representative, $\Tilde{h}:S\to S$ and a maximal
proper $\Tilde{h}$-invariant $G$-subgraph $S_0$ of $S$ (we denote by $H_0$ the quotient $S_0/G$)  such that
the restriction of $\Tilde{h}$ on $S_0$ is an isometry and, moreover, the induced map on the
quotient $H_0$, is the identity. In other words, we assume that $\Tilde{h}$ sends every edge of $S_0$ to
an edge of the same orbit or, equivalently, the transition matrix of $\Tilde{h}$ is the identity when
restricted to $S_0$.\par
Now we focus on the top stratum $S_1=\overline{S-S_0}$.  By assumption, the top stratum is non-exponentially growing. Therefore, we can assume that, it contains a single edge $\Tilde{e}$ and $\Tilde{h}$ can be chosen so that $\Tilde{h}(\Tilde{e})=\Tilde{e}\Tilde{a}$, where $\Tilde{a}$ a is some segment of $S_0$ corresponding to a
simple loop (we can do this without lost after changing $[\psi]$ with a power, as if it had the
form $\Tilde{h}(\Tilde{e})=\overline{\Tilde{e}}\Tilde{a}$, we could change $\Tilde{h}$ with $\Tilde{h}^2$). As $\Tilde{h}$ is a relative train track and $[\psi]^k\in\mathrm{Stab}(\Lambda)$, we have that for every positive integer $m$ which is multiple of $k$, $\Tilde{h}^m$-images of $\Tilde{e}$ produce
arbitrarily long segments of the lamination that are contained in $S_0$. More specifically,
we have for every $m=kd>0$,
\[
[h^m(\Tilde{e})]=h^m(\Tilde{e})=\Tilde{e}\Tilde{a}\Tilde{h}(\Tilde{a})\ldots \Tilde{h}^{m-1}(\Tilde{a})
\]
and its length is equal to $\mathrm{length}(\Tilde{e})+m\cdot \mathrm{length}(\Tilde{a})$ (as $\Tilde{h}$ acts as an isometry).\par
This contradicts quasiperiodicity in the case where this path does occur as a line segment,
and so we must have cancellation of the $S_0$-part (as the lamination would contain arbitrarily long segments of the proper sub-graph $S_0$). This forces the lines of the lamination
to have a very special form. In particular, in any line of the lamination in $S$-coordinates,
it is not possible to have two consecutive appearances of $\Tilde{e}$ without an appearance of $\overline{\Tilde{e}}$ in-between.\par
Therefore, a line of the lamination in $S$-coordinates is forced to be of the form:
\[
\ldots \Tilde{e}\cdot\widetilde{c_{-1}}\cdot\Tilde{e}\cdot\widetilde{x_{-1}}\cdot\Tilde{e}\cdot\widetilde{c_0}\cdot\overline{\Tilde{e}}\cdot\widetilde{x_0}\cdot\Tilde{e}\cdot\widetilde{c_1}\cdot\overline{\Tilde{e}}\cdot\widetilde{x_1}\cdot\Tilde{e}\cdot\widetilde{c_2}\cdot\overline{\Tilde{e}}\ldots
\]
for some segments $\widetilde{c_i}$'s which are concatenations of paths projecting to the same path as $\Tilde{a}$ to $H_0$ and some segments $\widetilde{x_i}$ which are contained in $H_0$.\par
There are two cases; either the projection of $\Tilde{e}$ to $S/G$ is a separating edge or not. Since $\Tilde{h}$ has non exponential growth and the induced map on the
quotient $H_0$ is the identity, we have already seen that, the induced map on $e$ is also the identity. In the non separating case, the lamination is  carried by the subgroup which is the fundamental group of the graph of groups
consisting of the disjoint union of two graph of groups, one corresponding to the simple loop $a$ (together with all the vertex stabilisers that $a$ crosses in $H_0$) and one corresponding
to the whole $H_0$, connected by the edge $e$. By Proposition \ref{finiteindex} this graph of groups must
correspond to a finite index subgroup, which is not possible as $e$ has just one lift, while
all edges crossing $a$ have two lifts (they should have the same number of lifts, provided
the assumption on the finite index). This leads us to a contradiction, except if $a$ crosses no edges, i.e. it is trivial.\par
Therefore, we have that $\Tilde{h}$ fixes the orbit of $\Tilde{e}$, inducing  the identity on $S/G-H_0$ and so on $S/G$.

\end{enumerate}    
\end{proof}

\section{The stretching homomorphism}\label{stretchingsection5}
In this section, we consider the kernel of the stretching homomorphism, whose existence is ensured by Proposition \ref{sigma}. We recall that we have fixed a group $G$, a finite (non-trivial) free product decomposition of $G=G_1\ast\ldots\ast G_k\ast F_p$, the corresponding relative outer space $\mathcal{O}=\mathcal{O}(\mathcal{G})$ and an iwip automorphism $[\phi]\in\mathrm{Out}(G,\mathcal{G})$. We denote by $\Lambda=\Lambda_{[\phi]}^+$ the associated stable lamination.

\begin{proposition}[{{\cite[Proposition 4.13 and 4.14]{LymanCTs}}}]\label{sigma}
Let $[\psi]\in\mathrm{Stab}(\Lambda)$ and $\Tilde{h}:T\to T$ an $\mathcal{O}$-representative of $[\psi]$. There exists a positive constant $\lambda=\lambda([\psi])=\lambda(\Lambda)$, which does not depend on the representative $\Tilde{h}$, such that for every $\epsilon>0$ there is some $N>0$ such that if $L$ is a line segment of $\Lambda$ of length greater than $N$, then 
\[
\left|\dfrac{\mathrm{length }[\Tilde{h}(L)]}{\mathrm{length }L}-\lambda\right|<\epsilon.
\]
Furthermore, there is a well defined homomorphism $\mathrm{Stab}(\Lambda)\to\mathbb{R},\ [\psi]\mapsto \ln{\lambda(\Lambda)}$, which induces a homomorphism
\[
\sigma:\mathrm{Stab}(\Lambda)\to\mathbb{Z}.
\]
\end{proposition}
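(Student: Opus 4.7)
The plan is to produce the stretching constant $\lambda([\psi])$ as an asymptotic length ratio on long leaf segments of $\Lambda$, promote the map $[\psi]\mapsto\lambda([\psi])$ to a multiplicative homomorphism, and finally identify its image with a discrete subgroup of $\mathbb{R}_{>0}$.

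First I would fix a train track representative $\tilde{f}:T\to T$ of a suitable iterate of $[\phi]$, so $\Lambda$ is realised in $T$ with Perron--Frobenius stretching $\mu>1$. Given $[\psi]\in\mathrm{Stab}(\Lambda)$ and an $\mathcal{O}$-representative $\tilde{h}:T\to T$ of $[\psi]$, the condition $[\psi]\Lambda=\Lambda$ combined with the $G$-equivariance of $\tilde{h}$ and bounded cancellation forces the reduced image $[\tilde{h}(L)]$ of any leaf segment $L$ of $\Lambda$ to agree with a genuine leaf segment of $\Lambda$ outside two uniformly bounded subpaths at its endpoints. Quasi-periodicity of leaves together with Remark~\ref{rem4.4} (which excludes exactly-periodic leaves) then provides, for each $\epsilon>0$, a length $N$ beyond which all leaf segments share the same statistics of short sub-patterns up to $\epsilon$. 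A routine subadditivity style argument shows that $\mathrm{length}[\tilde{h}(L)]/\mathrm{length}(L)$ converges to a common limit across all long leaf segments; this limit is the definition of $\lambda([\psi])$. Independence of the chosen $\mathcal{O}$-representative $\tilde{h}$ follows directly from Lemma~\ref{lem2.20}: two representatives of $[\psi]$ yield reduced images of $L$ which coincide outside short end-subpaths, a bounded discrepancy that washes out in the limit.

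Next, given $[\psi_1],[\psi_2]\in\mathrm{Stab}(\Lambda)$ with representatives $\tilde{h}_1,\tilde{h}_2$, the composition $\tilde{h}_1\circ\tilde{h}_2$ is an $\mathcal{O}$-representative of $[\psi_1][\psi_2]$. Applying the stretching estimate first to $L$ under $\tilde{h}_2$ and then to the (near-)leaf segment $[\tilde{h}_2(L)]$ under $\tilde{h}_1$, the bounded-cancellation errors accumulate additively and are negligible asymptotically, giving $\lambda([\psi_1][\psi_2])=\lambda([\psi_1])\lambda([\psi_2])$. Therefore $[\psi]\mapsto\ln\lambda([\psi])$ is a homomorphism $\mathrm{Stab}(\Lambda)\to(\mathbb{R},+)$.

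The main obstacle is the final step: showing that the image is an infinite cyclic subgroup of $\mathbb{R}$, after which dividing by its positive generator produces $\sigma$ into $\mathbb{Z}$. The image contains $\mathbb{Z}\cdot\ln\mu$ since $\lambda([\phi]^n)=\mu^n$, so only discreteness is at issue. I would argue this by combining two observations. First, every $\lambda([\psi])$ arises as the Perron--Frobenius eigenvalue of the transition matrix of a relative train track representative of some power of $[\psi]$ whose top stratum carries $\Lambda$, so $\lambda([\psi])$ is an algebraic integer lying in a fixed finite-degree number field controlled by the combinatorics of $T$. Second, if $\lambda([\psi])=1$ then a relative train track representative of a suitable power of $[\psi]$ has no exponentially growing stratum, so Proposition~\ref{prp2.6bestvina}(2) yields an isometric iterate; consequently $\lambda([\psi])=1$ is an isolated value in the image. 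These two rigidity facts together force the image of $[\psi]\mapsto\ln\lambda([\psi])$ to be a discrete, and hence infinite cyclic, subgroup of $\mathbb{R}$, from which $\sigma$ is obtained by normalisation.
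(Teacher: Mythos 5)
First, a point of comparison: the paper does not prove Proposition \ref{sigma} at all --- it is imported from \cite{LymanCTs} (Propositions 4.13 and 4.14), so there is no internal argument to measure yours against; I can only assess your sketch on its own terms. Your first two paragraphs are reasonable: defining $\lambda([\psi])$ as an asymptotic length ratio on long leaf segments of $\Lambda$, using quasiperiodicity, deducing independence of the chosen representative from Lemma \ref{lem2.20}, and getting multiplicativity by composing representatives and absorbing the bounded-cancellation errors is in line with how such stretch homomorphisms are constructed, and produces the homomorphism $\mathrm{Stab}(\Lambda)\to\mathbb{R}$.

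The genuine gap is the last step, discreteness of the image of $[\psi]\mapsto\ln\lambda([\psi])$, which is exactly what is needed to obtain $\sigma:\mathrm{Stab}(\Lambda)\to\mathbb{Z}$, and neither of your two ``rigidity facts'' delivers it. (i) There is no fixed number field containing all the stretch factors: different elements of $\mathrm{Stab}(\Lambda)$ are represented on different trees by different transition matrices, and even after correcting the claim to ``algebraic integers of degree bounded by the number of edge orbits,'' such numbers form a dense subset of $\mathbb{R}$, so this observation alone yields no discreteness. What is actually needed is quantitative: for instance, that any compact interval contains only finitely many Perron--Frobenius eigenvalues of irreducible nonnegative integer matrices of bounded size (bounded eigenvalue plus bounded size forces bounded entries), or equivalently a uniform gap $\lambda\geq\lambda_0>1$ in the exponentially growing case; you assert neither, nor the (also nontrivial) identification of every $\lambda([\psi])$ with such an eigenvalue, which in this paper only emerges later (Proposition \ref{prp6.1}, Corollary \ref{cor6.11}). (ii) The claim that $\lambda([\psi])=1$ forces a power of $[\psi]$ to admit a representative with no exponentially growing stratum is not a consequence of Proposition \ref{prp2.6bestvina}(2) --- that proposition takes ``no exponentially growing strata'' as its \emph{hypothesis}. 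The implication you invoke is the contrapositive of Corollary \ref{cor6.11}, whose proof occupies Propositions \ref{prp6.1} and \ref{prp6.9} and the surrounding machinery, so it cannot be waved through at this stage. Moreover, even granting it, ``$\lambda=1$ implies an isometric iterate'' is a statement about the single value $1$ and says nothing about whether stretch factors different from, but arbitrarily close to, $1$ occur in the image; isolation of $0$ in the image subgroup is what discreteness requires. As written, your argument establishes the $\mathbb{R}$-valued homomorphism but not the passage to $\mathbb{Z}$.
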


\subsection{The reducible case}
In this subsection we prove that an element of $\mathrm{ker}(\sigma)$ is either non-exponentially growing or has a positive iterate that is reducible.
\begin{proposition}\label{prp6.1}
If $[\psi]\in\mathrm{Stab}(\Lambda)$ is exponentially growing and there exists some $k\in\mathbb{N}$ such that $[\psi]^k$ is reducible, then $[\psi]\not\in \mathrm{ker}(\sigma)$.
\end{proposition}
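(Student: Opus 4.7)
The plan is to assume for contradiction that $\sigma([\psi]) = 0$, i.e.\ $\lambda([\psi]) = 1$, and derive a contradiction from $[\psi]$ being exponentially growing. Since reducibility of some iterate, membership in $\ker\sigma$, and exponential growth are all preserved under passing to positive powers, we may replace $[\psi]$ by a sufficiently high iterate and assume $[\psi]$ itself is reducible. Fix a reduced relative train track representative $\tilde h:T\to T$ of $[\psi]$ with filtration $\emptyset = T_0 \subsetneq T_1 \subsetneq \cdots \subsetneq T_m = T$.

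The first task is to locate an exponentially growing stratum $H_s = T_s \setminus T_{s-1}$ such that $T_{s-1}$ is a proper $\tilde h$-invariant $G$-subtree whose quotient has no valence-one free vertices. Since $[\psi]$ is reducible the filtration is nontrivial, and since $[\psi]$ has exponential growth some stratum is exponentially growing; a mild massaging of the filtration and, if necessary, a further passage to a power of $[\psi]$ (still in $\ker\sigma$ and still exponentially growing) allows us to invoke Proposition~\ref{prp2.6bestvina}(1) to conclude that $\tilde h|_{T_{s-1}}$ is an isometry. Denote by $\mu > 1$ the Perron–Frobenius eigenvalue of the transition matrix of $H_s$.

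Next, I would verify that leaves of $\Lambda$ must cross edges of $H_s$. If they did not, $\Lambda$ would be carried by the free factor system $\mathcal F(T_{s-1}/G)$, which is a proper free factor system of finite Kurosh rank respecting the peripheral structure of $\mathcal G$. Proposition~\ref{finiteindex} would then force the corresponding subgroup to have finite index in $G$, which is impossible for a proper free factor. Hence some (and therefore every) generic leaf of $\Lambda$ crosses $H_s$.

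Finally, by quasiperiodicity of leaves of $\Lambda$, there is some $\delta > 0$ bounding below the density of $H_s$-edges in every sufficiently long leaf segment $L$ of $\Lambda$. The relative train track property guarantees that $H_s$-edges are not cancelled under iteration of $\tilde h$, so the count of $H_s$-edges in $\tilde h^n(L)$ exceeds $\mu^n \delta |L|$ minus a constant coming from bounded cancellation at boundaries between $H_s$-edges and $T_{s-1}$-segments, giving $|\tilde h^n(L)| \geq c\,\mu^n |L|$ for some $c > 0$ and $|L|$ large. On the other hand, $\sigma([\psi]^n) = 0$ combined with Proposition~\ref{sigma} applied to $\tilde h^n$ yields $|\tilde h^n(L)|/|L| \to 1$ as $|L| \to \infty$, hence $|\tilde h^n(L)| \leq 2|L|$ for $|L|$ large. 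Choosing $n$ with $c\mu^n > 2$ gives the desired contradiction. I expect the main obstacle to be making the no-cancellation step quantitative, since it requires combining legality of $H_s$-paths under $\tilde h$ with a bounded-cancellation estimate at the boundaries between $H_s$-edges and subpaths in $T_{s-1}$.
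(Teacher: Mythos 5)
Your outline follows the same strategy as the paper: use Proposition~\ref{prp2.6bestvina}(1) to make the representative an isometry on the invariant lower part, use Proposition~\ref{finiteindex} to see that leaves of $\Lambda$ must cross the exponentially growing stratum, and then play the growth rate of leaf segments against $\sigma([\psi])=0$ via Proposition~\ref{sigma}. But two steps, as written, do not go through. First, you never arrange that $H_s$ is the \emph{top} stratum. The paper's opening step shows that every stratum other than the top one is non-exponentially growing: if a lower stratum $H_r$ were EG, the tightened $\tilde h$-iterates of its edges would be arbitrarily long \emph{leaf segments} (this is where $[\psi]\in\mathrm{Stab}(\Lambda)$ enters) contained in the proper invariant subgraph $T_r$, contradicting quasiperiodicity and the irreducibility of $[\phi]$. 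Without that reduction, your claim that a leaf missing $H_s$ is carried by $\mathcal{F}(T_{s-1}/G)$ is false as stated, since such a leaf could traverse strata \emph{above} $H_s$; the correct carrier would be the complement of the orbit of $H_s$ (which still gives a proper free factor system and the contradiction with Proposition~\ref{finiteindex}), but the clean statement you actually need downstream is that the EG stratum is the unique top one, with everything below it eventually an isometry inducing the identity on the quotient.

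Second, the assertion that ``the relative train track property guarantees that $H_s$-edges are not cancelled under iteration of $\tilde h$'' is too strong. RTT maps protect $H_s$-edges only along $s$-legal paths, and a leaf of $\Lambda^+_{[\phi]}$ has no reason to be $s$-legal for a representative of $[\psi]$: cancellation of $H_s$-edges can occur at $s$-illegal turns between two $H_s$-edges, not only ``at boundaries between $H_s$-edges and $T_{s-1}$-segments,'' and under $n$-fold iteration these losses are not controlled by a single additive constant without a splitting or critical-constant argument (this is exactly the machinery the paper deploys in the irreducible case, Proposition~\ref{lem6.3} and Lemma~\ref{lem6.7}). In the reducible case the paper circumvents this differently: after the reduction above, $\tilde h$ is an isometry on $H_0$ inducing the identity on $H_0/G$, a long leaf segment splits as $p_0q_1p_1\cdots q_np_k$ into uniformly bounded blocks alternating between $H_0$ and the top stratum, and tightening is confined to the individual blocks $[\tilde h^m(q_j)]$ with the end blocks never cancelled; hence the image of a leaf segment is again a leaf segment whose growth rate --- which is precisely what $\sigma$ measures by Proposition~\ref{sigma} --- equals the Perron--Frobenius eigenvalue $\mu>1$ of the top stratum. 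So your plan is the right one, but the quantitative step needs this block decomposition (or an equivalent legality/Nielsen-path argument) rather than the bounded-cancellation estimate you propose, and it needs the EG stratum to sit at the top of the filtration.
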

\begin{proof}
    Let $\Tilde{h}:T\to T$ be a relative train track representative of $[\psi]$.\par
    First, we note that every stratum, except possibly the top one, is non-exponentially growing. Indeed, if otherwise, some $H_r$ was exponentially growing and $\Tilde{e}\in H_r$, then we would have that the
lengths of tightenings of $\Tilde{h}$-iterates of $\Tilde{e}$ are arbitarily long (by the train track properties)
while they are line segments (by the definition of the stabiliser of the lamination), which
would mean that arbitarily long segments are contained in some proper subgraph (since $\Tilde{h}(T_{r-1})\subseteq T_{r-1}$). This is impossible since the lamination contains the edges not in $T_r$.\par
Therefore, since $[\psi]$ is exponentially growing, we can suppose, after changing $\Tilde{h}$ with
some iterate, if it is necessary, that there exists some $G$-subgraph $H_0$ which is a union of
strata, so that each of them is non-exponentially growing and, even more, by Proposition \ref{prp2.6bestvina}, $\Tilde{h}$ restricted to $H_0$ induces the identity in the quotient graph (i.e. it induces an isometry in the level of the tree). We also suppose that the top stratum is exponentially growing. In this
case, for any line of the lamination and by using the subgraph-overgraph decomposition
of the line, it follows that the lengths of long line segment grow exponentially and, in
fact, the value of $\lambda([\psi])$ is the Perron-Frobenius eigenvalue corresponding to the unique
exponentially growing (top) stratum. It is worth to note here, that the previous argument
really depends on the fact that $H_0$ is not trivial (or in other words, the fact that $[\psi]^k$ is
reducible), as otherwise the subgraph-overgraph decomposition cannot be used. More precisely, under our assumptions, it suffices to consider a long line segment of the form 
\[
p=p_0q_1\ldots p_{k-1}q_np_k
\]
where $n>0$, all $p_i$'s and $q_j$'s for $i=0,\ldots,k$, $j=1,\ldots,n$ have uniformly bounded length and they are contained in $H_0$ and $T\setminus H_0$, respectively. By the relative train track properties, for every (multiple of $k$) $m>0$ we have that the path
\[
[h^m(p)]=h^m(p_0)[h^m(q_1)]h^m(p_1)\ldots h^m(p_{k-1})[h^m(q_n)]h^m(p_k)
\]
is again a line segment as the segments $h^m(p_0)$ and $h^m(p_k)$ can never be cancelled and the
previous argument for the growth can be applied.
\end{proof}

\subsection{The irreducible case}
In this subsection we prove that every element of $\mathrm{ker}(\sigma)$ is non-exponentially growing.
\begin{proposition}[{{\cite[Corollary 6.1.8]{Francaviglia2021OnTA}}}]\label{lem6.3}
    Let $\Tilde{h}:S\to S$ be a train track map representing some irreducible $[\psi]\in\mathrm{Out}(G,\mathcal{G})$. Then for every $C>0$ there is a positive number $M>0$ such that if $L$ is any
path, then one of the following holds:
\begin{enumerate}
    \item $[\Tilde{h}^M(L)]$ contains a legal segment of length more than $C$.
    \item $[\Tilde{h}^M(L)]$ has fewer illegal turns than $L$.
    \item $L$ is a concatenation $x\cdot y\cdot z$ such that $y$ is Nielsen preperiodic and each of $x,z$ has length less than or equal to $2C$ and at most one illegal turn.
\end{enumerate}
\end{proposition}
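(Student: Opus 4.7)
The plan is to analyze $L$ through its decomposition into maximal legal subpaths separated by illegal turns. Write $L = \alpha_0 \ast \alpha_1 \ast \cdots \ast \alpha_k$, where each $\alpha_i$ is a maximal legal subpath (possibly trivial at the ends) and each $\ast$ marks an illegal turn. The two ingredients I will exploit are the uniform stretch factor $\lambda > 1$ coming from Perron--Frobenius theory applied to the irreducible train track $\Tilde{h}$, and the bounded cancellation constant $B = \mathrm{BCC}(\Tilde{h})$ governing the tightening of $\Tilde{h}(L)$ at its illegal turns. Combining these, provided no two consecutive illegal turns cancel during tightening, each interior legal piece $\alpha_i$ with $1 \leq i \leq k-1$ maps to a legal subpath of $[\Tilde{h}(L)]$ of length at least $\lambda|\alpha_i| - 2B$, while each boundary piece $\alpha_0, \alpha_k$ loses cancellation on at most one side.

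First I would dispose of case $(2)$: if for some $m \leq M$ the number of illegal turns of $[\Tilde{h}^m(L)]$ drops, we are done, so I may assume it is preserved for every $m \leq M$. This keeps the tracking of legal pieces under iteration combinatorially clean. Choose a critical constant $K$ with $\lambda K - 2B > K$. If any $\alpha_i$ has length exceeding a threshold slightly larger than $K$ (with a slightly better threshold for the boundary pieces $\alpha_0$ and $\alpha_k$, which enjoy one-sided cancellation only), then $|\alpha_i|$ grows strictly under iteration and tends to infinity; choosing $M$ large forces $[\Tilde{h}^M(L)]$ to contain a legal subpath of length greater than $C$, producing case $(1)$.

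The heart of the argument is the remaining case, where every legal piece of $L$ is uniformly bounded by a constant depending only on $C$, $\lambda$, and $B$. In this regime, the middle portion of $L$ has bounded combinatorial complexity: at each interior illegal turn one sees a turn type from a finite list, and each interior legal piece is a legal path of bounded length. Since $\Tilde{h}$ has only finitely many orbits of edges and turns, the set of possible such ``short patterns'' modulo the $G$-action is finite, and $\Tilde{h}$ preserves the short-pattern regime. Applying a pigeonhole argument to iterates $[\Tilde{h}^n(\cdot)]$ of the short interior patterns in $L$, some subpath $y$ of $L$ satisfies $[\Tilde{h}^n(y)] = g \cdot y$ for suitable $n \in \mathbb{N}$ and $g \in G$; that is, $y$ is Nielsen preperiodic in the sense of the definition preceding the statement. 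I then cut $L = x \cdot y \cdot z$ by letting $x$ be the initial segment before $y$ begins and $z$ the terminal segment after $y$ ends. The bound $|\alpha_0|, |\alpha_k| \leq 2C$ (ensured by failure of case $(1)$) together with the uniform bound on interior short pieces allows $x$ and $z$ to be chosen with length at most $2C$ and at most one illegal turn each.

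The main obstacle I expect is the combinatorial step isolating $y$: I need the pigeonhole to produce a genuine $G$-equivariant recurrence $[\Tilde{h}^n(y)] = gy$ rather than a merely combinatorial one, and the cut $L = x \cdot y \cdot z$ must respect the constraint that each of $x, z$ contains at most one illegal turn. The cleanest route is to first identify a long Nielsen preperiodic subpath in the middle by running the pigeonhole on the finitely many $G$-orbits of bounded patterns appearing along the iterates, and then to absorb any surrounding short legal remnants into $x$ and $z$; the fact that each absorption contributes at most one illegal turn rests on the uniform upper bound on the length of interior legal pieces in the small case, which limits how far beyond the first (respectively last) illegal turn one must reach to meet the periodic core.
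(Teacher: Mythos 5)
First, note that the paper does not actually prove Proposition \ref{lem6.3}: it is imported by citation from \autocite[Corollary~6.1.8]{Francaviglia2021OnTA} (its free-group prototype is Lemma~2.9 of \cite{Lam}), so the comparison is with that standard argument. Your skeleton matches it: decompose $L$ into maximal legal pieces, use the critical constant $K$ with $\lambda K-2B>K$ so that supercritical legal pieces grow and force conclusion $(1)$, use monotonicity of the illegal-turn count to reduce to the case where no turn dies (conclusion $(2)$), and treat the residual ``all pieces short'' case by a finiteness argument. That much is the right shape.

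The genuine gap is the step that produces the Nielsen preperiodic subpath $y$, which you yourself flag but do not resolve. The constant $M$ must be uniform over all paths $L$, while the number of illegal turns of $L$ is unbounded; hence the set of possible ``middles'' of $L$, even with every legal piece bounded, is infinite, and you cannot pigeonhole the finitely many iterates $[\Tilde{h}^m(L)]$, $m\le M$, of the whole middle to get a recurrence. If instead you pigeonhole locally (turn neighbourhoods, or single maximal legal pieces), two further problems appear. First, what you see inside $[\Tilde{h}^m(L)]$ is not the honest reduced iterate of the subpath $y$: it is $[\Tilde{h}^m(y)]$ truncated at both ends by the cancellations at the turns joining $y$ to the rest of $L$, and these truncations accumulate (they are rescaled by $\lambda$ at every subsequent step), so recurrence of the pattern inside $[\Tilde{h}^m(L)]$ does not give $[\Tilde{h}^n(y)]=g\,y$, which is exactly what the definition of Nielsen preperiodic requires; in the short-piece regime the cancellation at a turn can even depend on how much the adjacent pieces have already been trimmed, so the local dynamics is not autonomous. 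Second, even granted per-piece recurrences $[\Tilde{h}^{n_i}(y_i)]=g_i y_i$, concatenating them into a single relation for $y=y_1\cdots y_r$ needs a common exponent and compatible elements $g_i$ at the junctions; at a non-free vertex the $g_i$ may differ by a vertex-group element, and nothing in your argument rules this out. The correct proofs run the finiteness over the finitely many $G$-orbits of paths with one illegal turn, short legal sides and endpoints at vertices (for each such orbit, with a uniform bound: either the turn dies, or a side grows past $C$, or the path is Nielsen preperiodic with bounded period), and then carry out a careful transfer of this dichotomy to the subpaths of $L$ together with the assembly just described; that transfer and assembly are precisely what your sketch leaves unproved. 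Two minor points: your threshold ``slightly larger than $K$'' must be uniform, which for interior pieces follows because their endpoints are at vertices (their lengths take finitely many values below any bound) --- say so; and your finiteness ``modulo the $G$-action'' silently uses the same endpoint-at-vertices fact.
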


\begin{lemma}\label{lem6.7}
   Let $[\psi]\in\mathrm{Stab}(\Lambda)$ be a relative iwip and $\Tilde{h}:Q\to Q$ be a stable train track representative of $[\psi]$. If $\ell$ is some line of the lamination $\Lambda$ in $S$-coordinates,  then there is an integer $m$ such that
it is not possible for $\ell$ to contain a concatenation of $m$ Nielsen preperiodic subpaths. 
\end{lemma}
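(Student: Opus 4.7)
The plan is proof by contradiction. Assume that for every $m$ the line $\ell$ contains a subpath $L_m = p_1 p_2 \cdots p_m$ which is a concatenation of $m$ Nielsen preperiodic subpaths of $\Tilde{h}$. By replacing $[\psi]$ by a sufficiently large iterate---which preserves $\Lambda$, the property of having a stable train-track representative, and membership in $\mathrm{Stab}(\Lambda)$---I may assume that every $p_i$ is in fact Nielsen periodic: $[\Tilde{h}(p_i)] = g_i p_i$ for some $g_i\in G$.

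The first step is to refine each Nielsen periodic piece into indivisible ones. Using Proposition~\ref{lem6.3} I would argue that an indivisible Nielsen periodic subpath of a stable train-track representative of the relative iwip $[\psi]$ has bounded length and carries at most one illegal turn: indeed, applied to such a path, option $(1)$ of Proposition~\ref{lem6.3} is ruled out because the length of $[\Tilde{h}^M(p)]$ stays bounded under iteration, option $(2)$ stabilises because the illegal-turn count is a nonnegative integer, so option $(3)$ applies and yields the bound. Hence the collection of indivisible Nielsen periodic subpaths, viewed up to $G$-translation, is finite, and each $L_m$ refines to a concatenation $q_1 q_2 \cdots q_{N(m)}$ with $N(m)\to\infty$, where each $q_j$ is projectively equivalent to one of finitely many fixed segments and each junction $q_j\cdot q_{j+1}$ is of one of finitely many combinatorial types.

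The key step is the interaction with quasiperiodicity of leaves of $\Lambda$. A pigeonhole argument applied to the blocks $(q_j, q_{j+1})$ produces a fixed finite pattern $P$ whose projective translates occur in $L_m$ with density bounded below, uniformly in $m$. Quasiperiodicity then promotes these occurrences to the whole of $\ell$: every sufficiently long subsegment of $\ell$ contains (a $G$-translate of) $P$, with gap lengths belonging to a finite list of combinatorial types. Absorbing those gaps into the blocks---which is possible because both the blocks and their junctions come from finite families---forces $\ell$ to have the form $\ldots P_{-1} P_0 P_1 \ldots$ with every $P_i$ projectively equivalent to a single fixed segment, contradicting Remark~\ref{rem4.4}.

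The main obstacle I expect is the passage from a local pigeonhole conclusion to a global periodic structure on $\ell$: one has to control not only the indivisible Nielsen periodic pieces themselves but also the gluing turns between them, so that the repeated pattern obtained by pigeonhole is a genuine repeated block rather than a repeated ``letter'' in arbitrary contexts. A related subtlety is that the constant $C$ in Proposition~\ref{lem6.3} must be chosen uniformly so that a single bound governs all indivisible Nielsen periodic pieces; this is where the stability of the train-track representative and the relative iwip hypothesis on $[\psi]$ are essential.
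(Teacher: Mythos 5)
Your first step does not actually deliver what you need. Applied to a Nielsen periodic path $p$, alternative $(3)$ of Proposition~\ref{lem6.3} is satisfied \emph{trivially}: one may take $x$ and $z$ degenerate and $y=p$, since $p$ itself is Nielsen preperiodic. So the trichotomy gives no bound on the length of $p$ or on its number of illegal turns, and your claimed finiteness (up to $G$-translation) of indivisible Nielsen periodic subpaths does not follow from Proposition~\ref{lem6.3}. The bounded structure of indivisible Nielsen paths for a train track representative of an iwip is a separate nontrivial input, and the paper in fact uses a stronger statement which is exactly why the hypothesis ``stable train track representative'' appears: for a stable representative there is \emph{exactly one} orbit of indivisible Nielsen periodic path $P$ (cited from the stable train track literature). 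With that, the paper's argument is short: if $\ell$ contained arbitrarily long concatenations of indivisible Nielsen paths, these would all be translates of the single $P$, and quasiperiodicity would force every line segment of $\ell$ to sit inside a concatenation $P_1\cdots P_n$ of translates of $P$, contradicting Remark~\ref{rem4.4}. (There is also a smaller gap in your reduction ``replace $[\psi]$ by an iterate so that all pieces are Nielsen periodic'': the periods of the preperiodic pieces could a priori be unbounded as $m\to\infty$, and bounding them again requires the finiteness of indivisible Nielsen paths that you have not yet established.)

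The second, independent, gap is in the pigeonhole/quasiperiodicity step. Remark~\ref{rem4.4} only excludes leaves of the form $\ldots P_{-1}P_0P_1\ldots$ with \emph{every} block projectively equivalent to one fixed segment. If your finite family of indivisible pieces had two or more types, a leaf could be an arbitrarily long (indeed bi-infinite) concatenation of blocks drawn from that finite alphabet without being of the excluded form: think of a Sturmian-type arrangement of two block types, which is linearly recurrent (so compatible with quasiperiodicity and with your ``pattern of positive density with bounded gaps'' conclusion) but not a concatenation of translates of a single block. The move ``absorb the gaps into the blocks'' only produces again a concatenation of blocks from a finite family, not of translates of one fixed segment, so no contradiction with Remark~\ref{rem4.4} is reached. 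This is precisely the point where the uniqueness of the indivisible Nielsen path for a stable representative is doing the work in the paper's proof: it reduces the alphabet to a single letter, after which the contradiction with Remark~\ref{rem4.4} is immediate.
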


\begin{proof}
    Let $\Tilde{h}:Q\to Q$ be a stable train track representative of $[\psi]$ (\cite{CollinsTurner}, \cite{SykStableRepresentatives}). For a stable train track representative there is exactly one (orbit of) indivisible Nielsen periodic path $P$ in $S$. We suppose that there is no bound in the number
of concatenations of indivisible Nielsen paths in $\ell$. So by quasiperiodicity we have that every line segment
is contained in some concatenation of equivalent paths of the form $P_1P_2\ldots P_n$ (where every $P_i$ is in the orbit of $P$). This contradicts to Remark \ref{rem4.4}. Therefore, there is some bound on the number of $N$-periodic consecutive paths that a line
of the lamination could cross.  
\end{proof}

\begin{remark}\label{remof6.7}
    Lemma \ref{lem6.7} holds for any train track representative $\Tilde{g}:S\to S$ of $[\psi]$, with a possibly different $m$ that depends on the distance between $S$ and $Q$. This follows from the bounded cancellation lemma. In particular,  for any fixed neighborhood of $Q$ we can choose a uniform bound $m'$ that applies to all points of that neighborhood.   
\end{remark}

\begin{lemma}\label{lem6.5}
    Let $[\psi], [\psi]^{-1}\in\mathrm{Stab}(\Lambda)$ be two iwip automorphisms and $\Tilde{h}:S\to S$, $\Tilde{h}':S'\to S'$ be train track map representatives of $[\psi]$ and $[\psi]^{-1}$, respectively. If $\tau:S\to S'$ and $\tau':S'\to S$ are $\mathcal{O}$-maps, then for any $C>0$ there is a positive constant $N_0$ such that if $\ell$ is a line of the lamination $\Lambda$, and $\ell'$ is a line of $\Lambda$ in $S'$-coordinates, then for every $M\ge N_0$ at least  one of the following holds:
    \begin{enumerate}
        \item[(A)] $[\Tilde{h}^M(\ell)]$ contains a legal segment of length more than $C$.
        \item[(B)] $[\Tilde{h'}^M(\ell')]$ contains a legal segment of length more than $C$.
    \end{enumerate}
\end{lemma}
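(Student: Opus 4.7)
The plan is to proceed by contradiction using Proposition \ref{lem6.3} and Remark \ref{remof6.7} as the main ingredients. Suppose the conclusion fails for some $C>0$. Because a train track map stretches legal subsegments by a factor greater than one, once $[\Tilde{h}^M(\ell)]$ contains a legal segment of length greater than $C$, so does every later iterate (and similarly for $\Tilde{h}'$). Hence the failure amounts to the existence of a leaf $\ell$ of $\Lambda$ in $S$-coordinates and a leaf $\ell'$ of $\Lambda$ in $S'$-coordinates such that for every $M\geq 1$, neither $[\Tilde{h}^M(\ell)]$ nor $[\Tilde{h}'^M(\ell')]$ contains a legal segment of length greater than $C$.

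I would then apply Proposition \ref{lem6.3} to $\Tilde{h}$ with an enlarged constant $C_1 = C+2\,\mathrm{BCC}(\Tilde{h})$, producing an integer $M_1$, and pick a long line segment $L\subseteq \ell$ of length greater than $2m_0 D + 8C_1$, where $m_0$ is the bound from Remark \ref{remof6.7} for $\Tilde{h}$ (uniform on leaves of $\Lambda$ in iterates of $S$) and $D$ is the maximum length of an indivisible Nielsen periodic path. Applied to $L$, alternative $(1)$ is impossible under the standing assumption since a legal segment of length $>C_1$ inside $[\Tilde{h}^{M_1}(L)]$ would persist as a legal segment of length $>C$ inside the ambient line $[\Tilde{h}^{M_1}(\ell)]$ after the bounded endpoint cancellation; alternative $(3)$ is impossible because the purported middle subpath $y$ of length $>2 m_0 D$ would force more than $m_0$ consecutive indivisible Nielsen periodic subpaths in $\ell$, contradicting Remark \ref{remof6.7}. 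Hence alternative $(2)$ must hold and $[\Tilde{h}^{M_1}(L)]$ has strictly fewer illegal turns than $L$.

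Iterating the previous step with $L_k = [\Tilde{h}^{kM_1}(L)]$, which is a long subsegment of the leaf $[\Tilde{h}^{kM_1}(\ell)]$ of $\Lambda$ since $[\psi]\in\mathrm{Stab}(\Lambda)$, the illegal-turn count strictly drops at every step and after finitely many iterations $L_k$ becomes legal. Further $\Tilde{h}$-iteration then stretches this legal segment exponentially, eventually producing a legal subpath of length greater than $C$ in $[\Tilde{h}^{k'M_1}(\ell)]$ and contradicting our standing assumption that $(A)$ always fails. In the degenerate case that the $\Tilde{h}$-iteration shrinks $L_k$ to nothing before reaching zero illegal turns, the fully symmetric argument applied to $\Tilde{h}'$, a long segment $L'\subseteq \ell'$, and alternative $(B)$ provides the required contradiction instead, completing the dichotomy.

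The main obstacle will be the quantitative control of $|L_k|$ along the iteration: since each application of $\Tilde{h}$ may cancel up to $2\,\mathrm{BCC}(\Tilde{h})$ at every illegal turn, one must ensure that when the illegal-turn count reaches zero the remaining legal segment is still long enough for subsequent train track dilations to push its length past $C$. This is the standard balance between bounded cancellation and Perron--Frobenius stretching, and is handled by choosing $|L|$ sufficiently large at the outset and, if necessary, switching to the $\Tilde{h}'$-side to close the dichotomy.
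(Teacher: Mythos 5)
Your overall frame (contradiction, Proposition \ref{lem6.3}, ruling out alternatives $(1)$ and $(3)$ via Remark \ref{remof6.7} so that the illegal-turn count drops) matches the first half of the paper's argument, but the way you try to close the contradiction is not valid, and it misses the actual content of the lemma. Your plan is essentially one-sided: you claim that since the illegal-turn count strictly drops, $L_k=[\Tilde{h}^{kM_1}(L)]$ eventually becomes legal and is then stretched past $C$, contradicting the failure of $(A)$ alone, with the $\Tilde{h}'$-side only as a fallback "degenerate case". If that worked, it would prove that $(A)$ by itself always holds for every leaf of $\Lambda$, which is false: when $\Lambda=\Lambda^{-}_{[\psi]}$ (a case explicitly allowed by Proposition \ref{prp6.9}), leaves of $\Lambda$ are $[\psi]$-invariant as a set but their forward $\Tilde{h}$-images have uniformly bounded legal segments, so $(A)$ fails for all $M$. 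Concretely, the iteration you describe stalls: Proposition \ref{lem6.3} only forces alternative $(2)$ while you can exclude $(3)$, i.e.\ while the segment still has more than $m+2$ illegal turns; once the count drops to that range the argument gives nothing, and in the meantime the length of $L_k$ may have shrunk in proportion to the number of illegal turns removed (bounded cancellation at each one), so "becomes legal and then stretches" does not follow no matter how long $L$ is chosen at the start. Running "the fully symmetric argument" on the $\Tilde{h}'$-side has exactly the same defect, so your dichotomy never actually closes.

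The missing idea is the quantitative two-sided comparison, and it is here that the hypotheses you never use ($\tau$, $\tau'$ and Lemma \ref{lem2.20}) enter. In the paper, the failure of $(A)$ is converted not into "eventually legal" but into an exponential length decay $\mathrm{length}[\Tilde{h}^{sM}(L)]<Aa^{s}\,\mathrm{length}[L]$ with $a<1$ (because on long segments the illegal-turn count, which is comparable to length when all legal pieces have length between the shortest edge and $C$, drops by a definite factor $a$ under each application of $\Tilde{h}^{M}$). The same argument applied to $[\tau\Tilde{h}^{sM}(\ell)]$ with $\Tilde{h}'$ gives a second decay estimate if $(B)$ also fails. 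The contradiction then comes from composing: $(\Tilde{h}')^{sM}\tau\Tilde{h}^{sM}$ and $\tau$ are $\mathcal{O}$-maps between the same trees (this is where $\Tilde{h}'$ representing $[\psi]^{-1}$ is used), so by Lemma \ref{lem2.20} they agree on long paths up to bounded error, and likewise $\tau'\tau$ is comparable to the identity; multiplying the two decay estimates and comparing with the fixed lower bound $\bigl(2\,\mathrm{Lip}(\tau)\mathrm{Lip}(\tau')\bigr)^{-1}$ yields the contradiction as $s\to\infty$. Without some version of this step that genuinely ties $\ell$ and $\ell'$ together through $\tau,\tau'$, no contradiction can be extracted from the simultaneous failure of $(A)$ and $(B)$, since each can fail separately. (A smaller issue: your enlargement $C_1=C+2\,\mathrm{BCC}(\Tilde{h})$ does not control the endpoint cancellation when passing from a segment of $\ell$ to the line $[\Tilde{h}^{M_1}(\ell)]$, since the relevant constant is $\mathrm{BCC}(\Tilde{h}^{M_1})$, which grows with $M_1$; the paper avoids this by working with $\Tilde{h}^{M}$-admissible segments.)
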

\begin{proof}
    By Remark \ref{remof6.7}, there is some integer $m$ such that it is not possible  to concatenate
more than $m$, Nielsen periodic paths both in $\ell$ and $\ell'$. In fact, the same is true, with the same $m$, in all positive iterates $\title{h}^k(\ell)$ as, for some stable train track representative at $Q\in\mathcal{O}$ of $[\psi]$ we have $d([\psi]^k(Q),[\psi]^k(S))=d(Q,S)$. Similarly for $(\Tilde{h}')^k(\ell')$.\par
 Without loss of generality, we may assume that the positive constant $C$ is larger than the critical constants of both $\Tilde{h}$ and $\Tilde{h}'$. Let $M$ be the maximum of the two integers given in Proposition \ref{lem6.3} applied to $\Tilde{h},C$ and $\Tilde{h}',C$. Let’s fix a sufficiently large integer $s=s(\Tilde{h},\Tilde{h}',\tau,\tau',M)$, that will be specified later. Suppose that $(A)$ does not hold for $N_0=sM$. We will apply Proposition \ref{lem6.3} only
to $\Tilde{h}^M$-admissible segments (i.e. segments $L\subseteq \ell$ such that $\Tilde{h}^M(\partial L)\subseteq [\Tilde{h}^M(\ell)]$).  By our
assumption, the assertion $1$ of Proposition \ref{lem6.3} does not hold. If we further restrict
to segments $L$ with more than $m+2$ illegal turns, then the assertion $3$ of Proposition \ref{lem6.3} does not hold either  (because of our assumption on $[\psi]$). So, for such segments the assertion $2$ of Proposition \ref{lem6.3}  is always true. We can represent $\ell$ as a concatenation of such segments
of uniformly bounded length where that uniform bound does not depend on $\ell$,  but only on $\Tilde{h},\Tilde{h}',\tau,\tau',M$ (since we will apply the same argument using $[\tau \Tilde{h}(\ell)]$, $\Tilde{h}'$ instead of $\ell,\Tilde{h}$ respectively).\par
Let $p$ be an upper bound to the number of illegal turns in each segment (there are
finitely many, since the segments have uniformly bounded length and there are finitely
many orbits of illegal turns for any $\mathcal{O}$-maps). Let’s now fix some number $a$ such that $\frac{p-1}{p}<a<1$. For a long enough segment $L$ in $\ell$, we have
\[
\dfrac{\text{number of illegal turns in } [\Tilde{h}^M(L)]}{\text{number of illegal turns in } L}<a,
\]
since the number of illegal turns in $L$  is less than $p$ and the number of illegal turns in $[\Tilde{h}^M(L)]$ is strictly less than the number of illegal turns in $L$.\par
By applying the same argument to $\Tilde{h}^M(\ell)$ and more generally to $\Tilde{h}^{kM}(\ell)$,  we can see
that for any given $s>0$  and for any long enough segment $L\subseteq \ell$ (with length depending
on $s$) we have that
\[
\dfrac{\text{number of illegal turns in }[\Tilde{h}^{sM}(L)]}{\text{number of illegal turns in }L}<a^s,
\]
as otherwise $(A)$ would hold with $N_0=sM$. Since any such legal segment has length
bounded above by $C$ and bounded below by the length of the shortest edge (with the
possible exception of the two segments containing the endpoints that could be arbitrarily small but get cancelled), the lengths can be
compared with two inequalities to the number of illegal turns. Therefore if $(A)$ fails, there exists a constant $A=A(\Tilde{h},C)$ with the property
\[
\dfrac{\mathrm{length } [\Tilde{h}^{sM}(L)]}{\mathrm{length } [L]}<Aa^s.
\]
Similarly, we can apply the same argument using $[\tau \Tilde{h}^{sM}\ell]$  in place of $\ell$ and with $\Tilde{h}'$ in
place of $\Tilde{h}$. If $(B)$  fails as well, (with $N_0=sM$), then we reach the similar inequality
\[
\dfrac{\mathrm{length } [(\Tilde{h}')^{sM}\tau \Tilde{h}^{sM}(L)]}{\mathrm{length } [\tau \Tilde{h}^{sM}(L)]}<Ba^s.
\]
for some $B$ depending only on $\Tilde{h}',C$. Since $(\Tilde{h}')^{sM}\tau \Tilde{h}^{sM}$, $\tau$ are $\mathcal{O}$-maps between the same tree, Lemma \ref{lem2.20} implies that they coincide for every sufficiently long path, except for some bounded error near endpoints. In particular, for long enough $L$, we have that the ratio of their lengths is bounded above by $2$ and below by $\frac{1}{2}$. Therefore, by multiplying the previous inequalities and by replacing $(\Tilde{h}')^{sM}\tau \Tilde{h}^{sM}$ with $\tau$, we get the inequality:
\[
\dfrac{\mathrm{length }[\tau(L)]}{\mathrm{length }[\tau\Tilde{h}^{sM}(L)]}\cdot \dfrac{\mathrm{length }[\Tilde{h}^{sM}(L)]}{\mathrm{length } [L]}<2ABa^{2s}.
\]
On the other hand, by applying again Lemma \ref{lem2.20} and the fact that $\tau'\tau$ and the identity are
both $\mathcal{O}$-maps between the same trees, we get:
\[
\dfrac{\mathrm{length }[\Tilde{h}^{sM}(L)]}{\mathrm{length }[\tau\Tilde{h}^{sM}L]}\cdot \dfrac{\mathrm{length } [\tau(L)]}{\mathrm{length } [L]}>\dfrac{1}{2\text{Lip}(\tau)\text{Lip}(\tau')}.
\]
By sending $s$ to infinity, the previous inequalities lead us to a contradiction, as by construction $a<1$. 
\end{proof}

\begin{proposition}\label{prp6.9}
    Let $\Lambda=\Lambda_{[\phi]}^+\in\mathcal{IL}$ and $[\psi]$ an iwip outer automorphism of $G$. Then
either the forward $[\psi]$-iterates of $\Lambda$ weakly converge to $\Lambda_{[\psi]}^+$ or $\Lambda=\Lambda_{[\psi]}^-$. In
particular, if $[\psi]\in \mathrm{Stab}(\Lambda)$, then either $\Lambda=\Lambda_{[\psi]}^+$ or $\Lambda=\Lambda_{[\psi]}^-$.
\end{proposition}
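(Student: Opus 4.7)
The proposition asks me to analyze the dynamics of applying $[\psi]$ to the lamination $\Lambda = \Lambda_{[\phi]}^+$, and the natural tool is Lemma \ref{lem6.5}. My plan is to fix train track representatives $\Tilde{h}:S\to S$ of $[\psi]$ and $\Tilde{h}':S'\to S'$ of $[\psi]^{-1}$ together with $\mathcal{O}$-maps $\tau:S\to S'$ and $\tau':S'\to S$, and pick a leaf $\ell$ of $\Lambda$ in $S$-coordinates and the corresponding leaf $\ell'=[\tau(\ell)]$ of $\Lambda$ in $S'$-coordinates. For every large $C$ (well above the critical constants of both train tracks) Lemma \ref{lem6.5} provides a positive integer $N_0=N_0(C)$ such that for every $M\ge N_0$ at least one of the alternatives (A), (B) holds. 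The dichotomy in the proposition will come from deciding which alternative holds for arbitrarily large $M$ (and $C$).

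Suppose first that alternative (A) holds, i.e. $[\Tilde{h}^M(\ell)]$ contains a legal segment of length greater than $C$. Since $C$ is larger than the critical constant of the train track $\Tilde{h}$, any legal segment of length $>C$ survives and grows under further application of $\Tilde{h}$: after $k$ additional iterations it is legal of length at least $\lambda^k C$, where $\lambda>1$ is the Perron--Frobenius eigenvalue of $[\psi]$. By the constructive description of $\Lambda_{[\psi]}^+$ (iterating a neighborhood of a periodic point in a train track representative), very long legal segments of $\Tilde{h}$ are, up to translation, arbitrarily long leaf segments of $\Lambda_{[\psi]}^+$. Therefore the leaves of $[\psi]^{M+k}(\Lambda)$ contain arbitrarily long segments of $\Lambda_{[\psi]}^+$ as $k\to\infty$, which is precisely the statement that the forward $[\psi]$-iterates of $\Lambda$ weakly converge to $\Lambda_{[\psi]}^+$.

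Now suppose instead that (A) fails for arbitrarily large $M$, so that by Lemma \ref{lem6.5} alternative (B) holds for the same values of $M$. Then $[(\Tilde{h}')^M(\ell')]$ contains a legal segment of $\Tilde{h}'$ of length $>C$. This line is a leaf of $[\psi]^{-M}(\Lambda)$ in $S'$-coordinates; iterating further by $\Tilde{h}'$ and using the same ``legal segments grow to leaves of the stable lamination'' principle for $[\psi]^{-1}$, I conclude that leaves of $[\psi]^{-(M+k)}(\Lambda)$ contain arbitrarily long leaf segments of $\Lambda_{[\psi^{-1}]}^+=\Lambda_{[\psi]}^-$. Since every lamination has quasiperiodic leaves (preserved by applying any $\mathcal{O}$-map, hence by applying $[\psi]^{-(M+k)}$), every long enough segment of every leaf of $[\psi]^{-(M+k)}(\Lambda)$ contains, up to translation, each such leaf segment of $\Lambda_{[\psi]}^-$. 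Pulling back by $[\psi]^{M+k}$ and using that $\Lambda_{[\psi]}^-$ is $[\psi]$-invariant, I obtain that every long enough segment of every leaf of $\Lambda$ contains arbitrarily long leaf segments of $\Lambda_{[\psi]}^-$. Taking a limit (using compactness of the space of lines in the observer's topology) yields a leaf of $\Lambda_{[\psi]}^-$ inside $\Lambda$, and then quasiperiodicity of $\Lambda$ together with the fact that $\Lambda_{[\psi]}^-$ is the closure of a single (birecurrent, generic) leaf forces the equality $\Lambda=\Lambda_{[\psi]}^-$.

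The main obstacle I expect is the passage from ``leaves of $[\psi]^{-n}(\Lambda)$ contain long leaf segments of $\Lambda_{[\psi]}^-$'' to ``$\Lambda=\Lambda_{[\psi]}^-$'' in Case (B): the two laminations being compared live a priori in different $[\psi]$-orbits, so one must carefully invoke $[\psi]$-invariance of $\Lambda_{[\psi]}^-$ and the preservation of quasiperiodicity under $\mathcal{O}$-maps in order to translate the information back to $\Lambda$ itself. Once both laminations are seen to share a common leaf, the birecurrence/genericity property of the defining leaf of $\Lambda_{[\psi]}^-$ combined with minimality of laminations finishes the identification. The final ``in particular'' assertion is then immediate: when $[\psi]\in\mathrm{Stab}(\Lambda)$, forward $[\psi]$-iterates of $\Lambda$ are just $\Lambda$ itself, so the weak convergence alternative forces $\Lambda=\Lambda_{[\psi]}^+$, while the other alternative gives $\Lambda=\Lambda_{[\psi]}^-$ directly.
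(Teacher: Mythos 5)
Your setup and your handling of alternative (A) match the paper: long $\Tilde{h}$-legal segments grow under iteration and, by quasiperiodicity, force weak convergence of the forward iterates to $\Lambda_{[\psi]}^+$. The problem is in Case (B). What alternative (B) gives you is information about $[(\Tilde{h}')^{M}(\ell')]$, i.e.\ about leaves of the \emph{backward} iterates $[\psi]^{-M}(\Lambda)$, and your route back to $\Lambda$ — ``pulling back by $[\psi]^{M+k}$ and using that $\Lambda_{[\psi]}^-$ is $[\psi]$-invariant'' — does not work. Invariance of $\Lambda_{[\psi]}^-$ is an invariance of a set of lines, not of lengths: applying a topological representative of $[\psi]^{M+k}$ to a leaf segment of $\Lambda_{[\psi]}^-$ of length roughly $C\lambda^{k}$ (where $\lambda>1$ is the expansion of $[\psi]^{-1}$) contracts it, up to bounded cancellation, to length on the order of $C\lambda^{-M}$, so no long $\Lambda_{[\psi]}^-$-segments appear in $\Lambda$ this way. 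Indeed, your intermediate conclusion — that leaves of $[\psi]^{-(M+k)}(\Lambda)$ contain arbitrarily long leaf segments of $\Lambda_{[\psi]}^-$ — is the generic behaviour of backward iterates of \emph{any} lamination other than $\Lambda_{[\psi]}^+$ (apply the proposition itself to $[\psi]^{-1}$), so it cannot by itself single out the case $\Lambda=\Lambda_{[\psi]}^-$. The obstacle you flagged at the end is exactly this, and the fix you propose (quasiperiodicity plus $[\psi]$-invariance) does not supply the missing length transfer.

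The paper closes this gap with a different mechanism. In the remaining case one knows that $[\tau\Tilde{h}^{k}(\ell)]$ contains an $S'$-legal segment of length greater than $C$ (with $C$ above the critical constant of $\Tilde{h}'$) for all $k$; applying $(\Tilde{h}')^{k}$ expands that segment into a leaf segment of $\Lambda_{[\psi]}^-$ of length comparable to $\lambda^{k}C$ inside $[(\Tilde{h}')^{k}\tau\Tilde{h}^{k}(\ell)]$. The key point is then Lemma \ref{lem2.20}: $(\Tilde{h}')^{k}\tau\Tilde{h}^{k}$ and $\tau$ are $\mathcal{O}$-maps between the same trees (up to composing with a translation), so $[(\Tilde{h}')^{k}\tau\Tilde{h}^{k}(\ell)]$ coincides, up to bounded error, with the \emph{fixed} leaf $[\tau(\ell)]$ of $\Lambda$ in $S'$-coordinates. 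Hence $\Lambda$ itself contains arbitrarily long leaf segments of $\Lambda_{[\psi]}^-$, and quasiperiodicity gives $\Lambda=\Lambda_{[\psi]}^-$. This bounded-error comparison of the two $\mathcal{O}$-maps is the idea missing from your argument; without it the conclusion of Case (B) does not follow.
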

\begin{proof}
We will use the notation of Lemmas \ref{lem6.7} and \ref{lem6.5}. Let $\ell$ be a leaf of $\Lambda$ in $S$-coordinates. We apply the Lemma \ref{lem6.5} to $[\Tilde{h}^k(\ell)]$ for $K>0$ and some $C$ which is larger than both the critical constants of $\Tilde{h}$ and $\Tilde{h}'$. If for some $K>0$  the
statement $(A)$ holds, then it follows by quasiperiodicity that the forward iterates weakly converge to $\Lambda_{[\psi]}^+$ since we have that the length of reduced images tends to infinity. So
we have arbitrarily long legal segments and the properties of the lamination imply the convergence.\par
The remaining possibility is when $[\tau \Tilde{h}^k(\ell)]$ contains a $S'$ legal segment of length more than $C$ for all $k>0$. But this means that $[\tau(\ell)]$ which equals, up to bounded error, to $[\Tilde{h}'^k\tau \Tilde{h}^k(\ell)]$, contains an arbitrarily high $\Tilde{h}'$-iterate of a legal segment and, as before, quasiperiodicity
implies that $\Lambda=\Lambda_{[\psi]}^{-}$.
 \end{proof}

 \begin{corollary}\label{cor6.11}
 If $[\psi]\in\mathrm{Stab}(\Lambda)$ is exponentially growing, then $[\psi]\not\in \mathrm{ker}(\sigma)$.
 \end{corollary}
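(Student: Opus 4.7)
The plan is to reduce Corollary \ref{cor6.11} directly to the two preceding propositions via a dichotomy on whether $[\psi]$ admits a reducible iterate. First I would observe that if some positive power $[\psi]^k$ is reducible, then Proposition \ref{prp6.1} immediately delivers the conclusion, since its hypotheses are exactly that $[\psi]\in\mathrm{Stab}(\Lambda)$ is exponentially growing and has a reducible iterate.

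If instead every positive power $[\psi]^k$ is irreducible, then $[\psi]$ is by definition iwip. I would then apply Proposition \ref{prp6.9} to $[\psi]\in\mathrm{Stab}(\Lambda)$, which forces $\Lambda$ to coincide with either $\Lambda_{[\psi]}^+$ or $\Lambda_{[\psi]}^-$. In the subcase $\Lambda=\Lambda_{[\psi]}^+$, take a train track representative $\tilde{h}:T\to T$ of $[\psi]$, whose Perron--Frobenius eigenvalue $\mu$ satisfies $\mu>1$ because $[\psi]$ is exponentially growing. Every leaf of $\Lambda_{[\psi]}^+$ is legal with respect to this train track structure, so long leaf segments are stretched by exactly $\mu$ under $\tilde{h}$; by the uniqueness of the stretching factor in Proposition \ref{sigma}, this forces $\lambda([\psi])=\mu>1$, hence $\sigma([\psi])\neq 0$.

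The remaining subcase $\Lambda=\Lambda_{[\psi]}^-$ I would reduce to the previous one by passing to the inverse: since $\Lambda$ is the stable lamination of $[\psi]^{-1}$, the argument above applied to $[\psi]^{-1}\in\mathrm{Stab}(\Lambda)$ produces $\sigma([\psi]^{-1})\neq 0$, and the homomorphism property from Proposition \ref{sigma} gives $\sigma([\psi])=-\sigma([\psi]^{-1})\neq 0$.

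I do not anticipate any real obstacle, since the heavy machinery has already been developed: Proposition \ref{prp6.1} handles the reducible-iterate case, Proposition \ref{prp6.9} pins down the two possibilities in the iwip case, and Proposition \ref{sigma} guarantees both the uniqueness of $\lambda([\psi])$ and the homomorphism property needed to flip between $[\psi]$ and $[\psi]^{-1}$. The only point requiring attention is the identification $\lambda([\psi])=\mu$ for the train track representative, which follows because legal segments of a train track map are stretched by the Perron--Frobenius eigenvalue and leaves of $\Lambda_{[\psi]}^+$ are legal.
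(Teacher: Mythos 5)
Your proposal is correct and follows essentially the same route as the paper: the reducible-power case is delegated to Proposition \ref{prp6.1}, and in the iwip case Proposition \ref{prp6.9} identifies $\Lambda$ with $\Lambda_{[\psi]}^{\pm}$, after which a train track representative and its Perron--Frobenius eigenvalue show $\lambda([\psi])>1$, hence $\sigma([\psi])\neq 0$. Your explicit use of the homomorphism property to handle the subcase $\Lambda=\Lambda_{[\psi]}^-$ is just a spelled-out version of the paper's ``after possibly changing $[\psi]$ with $[\psi]^{-1}$''.
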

 \begin{proof}
For automorphisms with some reducible power, we have already proved it, in Proposition \ref{prp6.1}. For iwip automorphisms, it follows, by Proposition \ref{prp6.9}, that $\Lambda_{[\phi]}^+=\Lambda_{[\psi]}^+$ (after possibly changing $[\psi]$ with $[\psi]^{-1}$ if necessary). In this case, we can choose an optimal
train track representative $\Tilde{h}$ of $[\psi]$ for the topological representative of $[\psi]$ used for the
computation of $\lambda([\psi])$ (Proposition \ref{sigma}, {{\cite[Proposition 4.13 and Proposition 4.14]{LymanCTs}}}). Since $[\psi]$ is exponentially growing, the Perron-Frobenious eigenvalue $\lambda_{[\psi]}$ (which is equal to $\lambda([\psi])$) of the corresponding irreducible matrix $A(\Tilde{h})$ is greater than $1$. Therefore, the image of $[\psi]$ under $\sigma$, corresponding to $\ln{\lambda([\psi])}$, is positive.
 \end{proof}

\section{The second main theorem}\label{secondmainsection7}
We are now in position to prove our main technical result for the stabiliser of the lamination of an iwip outer automorphism $[\phi]\in\mathrm{Out}(G,\mathcal{G})$.
\begin{lemma}\label{lem7.1}
    Let $\Tilde{h}:S\to S$ be a relative train track representative of $[\psi]\in\mathrm{ker}(\sigma)$. Then there is some positive integer $k$ such that $\Tilde{h}^k$ induces the identity on $S/G$. In
particular, $[\psi]^k$ fixes $S$.
\end{lemma}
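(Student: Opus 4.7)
The plan is to combine Corollary \ref{cor6.11} with Proposition \ref{prp2.6bestvina}(2) and then pass to a further iterate to promote the conclusion from "isometry" to "identity on the quotient". Since the hypotheses already place $[\psi]$ inside $\mathrm{Stab}(\Lambda)$ (the stretching homomorphism is only defined there) and inside $\ker(\sigma)$, the starting observation is that $[\psi]$ must be non-exponentially growing: by Corollary \ref{cor6.11}, any exponentially growing element of $\mathrm{Stab}(\Lambda)$ lies outside $\ker(\sigma)$. Hence every stratum of the chosen relative train track representative $\tilde{h}$ is non-exponentially growing.

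At this point Proposition \ref{prp2.6bestvina}(2) applies directly and produces a positive integer $n$ such that $\tilde{h}^n : S \to S$ is an isometry. This is essentially the bulk of the content — the substantive work already took place in Section \ref{subgroupssection4} and Section \ref{stretchingsection5}.

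The only step left is to upgrade the isometry to the identity on the quotient graph. Since $\tilde{h}^n$ is an isometry of $S$ and is $\psi^n$-equivariant, it descends to a graph automorphism of the finite graph $S/G$. The automorphism group of a finite graph is finite, so some power $m$ of that induced automorphism is the identity. Setting $k = nm$, the iterate $\tilde{h}^k$ sends every edge $\widetilde{e} \in E(S)$ to an edge in its own $G$-orbit $G\cdot\widetilde{e}$, which is precisely the statement that $\tilde{h}^k$ induces the identity on $S/G$. Equivariance then packages $\tilde{h}^k$ into a $G$-equivariant isometry between $S$ and $S$ with action twisted by $\psi^k$, so $[\psi]^k(S) = S$ in $\mathcal{O}$.

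I do not expect any real obstacle here: the lemma is a fairly immediate consequence of the preceding machinery. The only point requiring care is verifying that $[\psi]$ satisfies the hypothesis of Proposition \ref{prp2.6bestvina}(2), and this is exactly what Corollary \ref{cor6.11} delivers. Everything else is a finite-automorphism-group argument on $S/G$.
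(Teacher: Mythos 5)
Your proof is correct and takes essentially the same route as the paper, whose own proof consists precisely of invoking Corollary \ref{cor6.11} to rule out exponential growth and then Proposition \ref{prp2.6bestvina}(2). The only added detail, passing from the isometry of $S$ to the identity on the finite quotient graph $S/G$ via finiteness of its simplicial automorphism group, is a point the paper leaves implicit (it is in effect contained in the proof of Proposition \ref{prp2.6bestvina}(2)), so nothing is missing.
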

\begin{proof}
It follows immediately from Corollary \ref{cor6.11} and Proposition \ref{prp2.6bestvina} (part $2$).    
\end{proof}

\begin{theorem}\label{thm7.2}
     Let $[\phi]\in\mathrm{Out}(G,\mathcal{G})$ be an iwip outer automorphism and $\mathrm{Stab}(\Lambda)$ the  stabiliser of the associated stable lamination. Then $\mathrm{Stab}(\Lambda)$ is an infinite cyclic extension of a virtually
elliptic subgroup $A$. In other words, for every element $[\psi]$ of $A$ there is a positive integer $k$ and some element $S$ of $\mathcal{O}$ such that $[\psi]^k(S)=S$.
\end{theorem}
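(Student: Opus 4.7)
The plan is to use the stretching homomorphism $\sigma:\mathrm{Stab}(\Lambda)\to\mathbb{Z}$ constructed in Proposition \ref{sigma} and to set $A\coloneqq\ker(\sigma)$. The theorem will then follow from the short exact sequence
\[
1\to A\to \mathrm{Stab}(\Lambda)\xrightarrow{\sigma} \sigma(\mathrm{Stab}(\Lambda))\to 1,
\]
provided I verify two facts: (i) the image of $\sigma$ is a nontrivial, hence infinite cyclic, subgroup of $\mathbb{Z}$, and (ii) every element of $A$ virtually fixes a point of $\mathcal{O}$.

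For (i), the key point is that $[\phi]$ itself lies in $\mathrm{Stab}(\Lambda)$ (since $[\phi]\cdot\Lambda^+_{[\phi]}=\Lambda^+_{[\phi]\cdot[\phi]\cdot[\phi]^{-1}}=\Lambda^+_{[\phi]}$) and, being iwip with $\Lambda\neq\emptyset$, has exponential growth with Perron--Frobenius eigenvalue strictly greater than $1$. By Corollary \ref{cor6.11}, any exponentially growing element of $\mathrm{Stab}(\Lambda)$ is sent by $\sigma$ to a nonzero integer, so in particular $\sigma([\phi])\neq 0$. Therefore $\sigma(\mathrm{Stab}(\Lambda))$ is a nontrivial subgroup of $\mathbb{Z}$ and hence isomorphic to $\mathbb{Z}$.

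For (ii), let $[\psi]\in A=\ker(\sigma)$. I would first invoke the existence of a relative train track representative $\tilde{h}:S\to S$ of $[\psi]$ (available in the free product setting, e.g.\ by the analogue of \cite{LymanCTs}). Lemma \ref{lem7.1} then directly applies: there is a positive integer $k$ such that $\tilde{h}^k$ induces the identity on $S/G$, which means $[\psi]^k$ fixes the point $S\in\mathcal{O}$. This is precisely the statement that $A$ is virtually elliptic in the sense of the theorem, and completes the proof.

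The main conceptual content is contained in the previously established machinery, so I do not expect any serious obstacle here; the only subtlety is being careful that Corollary \ref{cor6.11} is applied correctly to rule out exponentially growing elements in $\ker(\sigma)$, and that Lemma \ref{lem7.1} is applied to every element of $A$ individually (so the integer $k$ and the fixed tree $S$ depend on $[\psi]$, as the statement of the theorem allows). Note that this is the best one can hope for in general: the subgroup $A$ need not itself have a common fixed point in $\mathcal{O}$, only that each of its elements virtually does.
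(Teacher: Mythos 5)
Your proposal is correct and follows essentially the same route as the paper: define $A=\ker(\sigma)$ for the stretching homomorphism of Proposition \ref{sigma}, use Corollary \ref{cor6.11} to see that the exponentially growing element $[\phi]\in\mathrm{Stab}(\Lambda)$ is not in the kernel (so the image is a nontrivial, hence infinite cyclic, subgroup of $\mathbb{Z}$), and apply Lemma \ref{lem7.1} to each element of the kernel to get a power fixing a point of $\mathcal{O}$. Your closing remarks about $k$ and $S$ depending on $[\psi]$ match the intended reading of the statement.
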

\begin{proof}
We consider the stretching homomorphism, $\sigma:\mathrm{Stab}(\Lambda)\to\mathbb{Z}$ given by Proposition \ref{sigma}. The quotient $\mathrm{Stab}(\Lambda)/\mathrm{ker}(\sigma)$  is isomorphic
to a subgroup of $\mathbb{Z}$. Also, it is not trivial, as $[\phi]\in \mathrm{Stab}(\Lambda)- \mathrm{ker}(\sigma)$, since, from Corollary \ref{cor6.11}, $[\phi]$ does not belong in $\mathrm{ker}(\sigma)$ being exponentially growing. The last part of the theorem
follows from Lemma \ref{lem7.1} by setting $A=\mathrm{ker}(\sigma)$, as every element of $\mathrm{ker}(\sigma)$ has an iterate which fixes a point of $\mathcal{O}$.   
\end{proof}

The next corollary generalises the well known result of Bestvina, Feighn and Handel which says that the stabiliser of the lamination of an iwip automorphism of a free group is virtually cyclic \autocite[Theorem~2.14]{Lam}. As we will see in the next section, in the context of free products, there are examples of iwip automorphisms with big stabilisers (see Remark \ref{gibcent}). However, under the extra assumption that all the elliptic free factors $G_i$ are finite, we achieve the exact analog of the above mentioned result. 
\begin{corollary}\label{cor7.3}
    Let $G=G_1\ast \ldots \ast G_k\ast F_p$, where each free factor $G_i$ is a finite group. Then for every iwip outer automorphism $[\phi]\in\mathrm{Out}(G,\mathcal{G})$, the group $\mathrm{Stab}(\Lambda_{[\phi]}^+)$   is virtually (infinite) cyclic. In particular, $\mathrm{Stab}(\Lambda_{[\phi]}^+)$  is finitely generated.
\end{corollary}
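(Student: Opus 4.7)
The plan is to combine Theorem \ref{thm7.2} with the two structural facts about $\mathrm{Out}(G,\mathcal{G})$ mentioned in the introduction for the case when every free factor $G_i$ is finite: namely, that the stabiliser in $\mathrm{Out}(G,\mathcal{G})$ of any point $T\in\mathcal{O}$ is finite, and that $\mathrm{Out}(G,\mathcal{G})$ is virtually torsion-free. By Theorem \ref{thm7.2} we have a short exact sequence
\[
1\longrightarrow A\longrightarrow \mathrm{Stab}(\Lambda_{[\phi]}^+)\longrightarrow \mathbb{Z}\longrightarrow 1
\]
where $A=\mathrm{ker}(\sigma)$ is virtually elliptic. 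It therefore suffices to show that, in the present finite-factor setting, the subgroup $A$ is actually finite; the corollary will then follow immediately, since a group that is an extension of $\mathbb{Z}$ by a finite group is virtually infinite cyclic (and in particular finitely generated).

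First, I would observe that every element $[\psi]\in A$ has finite order. Indeed, by Theorem \ref{thm7.2} there is a positive integer $k$ and a point $S\in\mathcal{O}$ with $[\psi]^k(S)=S$; since each $G_i$ is finite, the stabiliser of $S$ in $\mathrm{Out}(G,\mathcal{G})$ is finite, so $[\psi]^k$ has finite order, and hence so does $[\psi]$. Thus $A$ is a torsion subgroup of $\mathrm{Out}(G,\mathcal{G})$.

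Next, I would invoke the fact that $\mathrm{Out}(G,\mathcal{G})$ is virtually torsion-free: let $T_0\leq \mathrm{Out}(G,\mathcal{G})$ be a torsion-free subgroup of finite index. Then the intersection $A\cap T_0$ is at once torsion and torsion-free, hence trivial. The natural inclusion $A\hookrightarrow \mathrm{Out}(G,\mathcal{G})/T_0$ therefore exhibits $A$ as a subgroup of a finite group, so $|A|\leq [\mathrm{Out}(G,\mathcal{G}):T_0]<\infty$ and $A$ is finite as required.

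I do not expect any genuine obstacle here; the work has all been done in Theorem \ref{thm7.2}, and the present corollary is a straightforward consequence once one knows the two general properties of $\mathrm{Out}(G,\mathcal{G})$ recalled above. The only point that deserves a little care is the appeal to virtual torsion-freeness of $\mathrm{Out}(G,\mathcal{G})$, which should be cited from the literature on automorphisms of free products with finite vertex groups (e.g.\ via the action on $\mathcal{O}$ together with finiteness of point stabilisers).
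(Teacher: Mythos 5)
Your argument is correct and is essentially the paper's own proof: both show that every element of $A=\mathrm{ker}(\sigma)$ virtually fixes a point of $\mathcal{O}$, deduce that $A$ is a torsion group from the finiteness of point stabilisers when each $G_i$ is finite, and then use virtual torsion-freeness of $\mathrm{Out}(G,\mathcal{G})$ (cited from \cite{Outspaceprod}) to conclude that $A$ is finite, so that $\mathrm{Stab}(\Lambda_{[\phi]}^+)$, being a finite-by-$\mathbb{Z}$ extension, is virtually infinite cyclic. The only cosmetic point is that $T_0$ need not be normal, so one should speak of injectivity of $A$ into the finite coset set (or pass to the normal core), but your bound $|A|\leq[\mathrm{Out}(G,\mathcal{G}):T_0]$ is valid as stated.
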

Note that under the hypothesis that every factor $G_i$ is finite, the decomposition $G=G_1\ast \ldots \ast G_k\ast F_p$ is the Grushko one.

\begin{proof}
    By Lemma \ref{lem7.1}, every element of $A=\mathrm{ker}(\sigma)$    (virtually) fixes a point. We will
prove that under our assumptions $A$ is finite. Therefore, $\mathrm{Stab}(\Lambda_{[\phi]}^+)/A\cong \mathrm{Im}(\sigma)\leq\mathbb{Z}$ hence $\mathrm{Stab}(\Lambda_{[\phi]}^+)$  is virtually cyclic.\par
For the finiteness of $A$, as we have already said, every element $[\psi]\in A$ virtually fixes a point. Since
every $G_i$ is finite, the stabilisers of points $T\in\mathcal{O}$ are finite as well (see as well \cite{Outspaceprod}). It follows that every element $[\psi]$ of the kernel $A$ has finite order which means that $A$ is a torsion group. In addition, $\mathrm{Out}(G)$  is virtually torsion free  {{\cite[Corollary 5.3]{Outspaceprod}}} since each $G_i$ is finite and thus $A$ must be finite.
\end{proof}

We denote by $\mathrm{Stab}_0(\Lambda_{[\phi]}^+)\coloneqq \mathrm{Out}_0(G,\mathcal{G})\cap\mathrm{Stab}(\Lambda_{[\phi]}^+)$  the finite index subgroup of $\mathrm{Stab}(\Lambda_{[\phi]}^+)$ which consists of  the automorphisms that preserve the conjugacy class of every $G_i$. In general, the restriction $[\psi]_i$ of any element $[\psi]\in \mathrm{Out}_0(G,\mathcal{G})$ to the free factor $G_i$ induces a well defined, up to an inner automorphism of $G_i$, element $[\psi]_i\in\mathrm{Aut}(G_i)$.  If we further assume that $[\psi]\in\mathrm{Out}(G,\{G_i\}^{(t)})$, i.e. $[\psi]\in \mathrm{Out}_0(G,\mathcal{G})$ acts as  as a conjugation by an element of $G$ on each $G_i$, then $[\psi]_i$ is an inner automorphism of the corresponding factor $G_i$.\par
Note that if $[\psi]\in\mathrm{Out}_0(G,\mathcal{G})$  acts as the identity on the graph of groups $\Gamma=T/G$ for some $T\in\mathcal{O}$, then it can be represented by a $G$-equivariant isometry $\Tilde{h}:T\to T$, preserving the orbit of every vertex and of every edge. More precisely, if
$\Tilde{v}$ is a non-free
vertex of $T$ such that $\Tilde{h}(\Tilde{v})=g\Tilde{v}$, $g\in G$,  then for any edge $\Tilde{e}$ emanating from $\Tilde{v}$, we have that $\Tilde{h}(\Tilde{e})=gd\Tilde{e}$ where $d\in G_i$ (whenever the stabiliser $G_{\Tilde{v}}$ is conjugate to $G_i$). In general, as $\Tilde{h}$ is an $\mathcal{O}$-map representing $[\psi]$, we have that $\Tilde{h}(g\Tilde{e})=\psi(g)\Tilde{h}(e)$. If we further assume that $[\psi]\in\mathrm{Out}(G,\{G_i\}^{(t)})$, then $\psi(d)=g_idg_i^{-1}$, $g_i,d\in G_i$ (as $[\psi]$ induces an inner automorphism on each $G_i$). Note also that whenever we choose a specific $\mathcal{O}$-map, we have essentially
chosen a specific representative of $\psi$ in its $\mathrm{Out}(G)$-class.\par
 For any $[\psi]\in\mathrm{ker}(\sigma)$, after possibly taking a suitable iterate, there is (Lemma \ref{lem7.1}) a tree $S\in \mathcal{O}$ such that $[\psi]\in\mathrm{Stab}_0(S)$, i.e. $[\psi]$
induces the identity on $S/G$.  In this case, there is an $\mathcal{O}$-map, $\Tilde{h}:S\to S$, representing $[\psi]$ where $\Tilde{h}$ has a very special form, as above. If $\ell$ is a line of
the lamination in $S$-coordinates, after possibly changing the topological representative $\Tilde{h}$ (equivalently, choosing a different representative of $\psi$ in its $\mathrm{Out}(G)$-class), we can assume
that $[\Tilde{h}(\ell)]=\Tilde{h}(\ell)=\ell$ (there is no cancellation for $\Tilde{h}$ as it is an immersion) and $\Tilde{h}(\Tilde{v})=\Tilde{v}$ for some non-free vertex $\Tilde{v}$ which is contained in $\ell$. This implies that $\Tilde{h}$  acts as the identity
on any line segment of $\ell$. More precisely, let’s suppose that $L$ is
a line segment of $\ell$ emanating from $\Tilde{v}$. If we suppose that the first edge of $L$ is $g_1\Tilde{e}$, for some $g_1\in G_{\Tilde{v}}$,  then for some fixed $g_e\in G_{\Tilde{v}}$ (depending only on the orbit of $\Tilde{e}$):
\[
\Tilde{h}(g_1\Tilde{e})=\psi(g_1)\Tilde{h}(\Tilde{e})=\psi(g_1)g_e\Tilde{e}.
\]
In this case we get the equation $\psi(g_1)g_e=g_1$, as edge stabilisers are trivial. Similarly, we
can continue in $\ell$ and we get  equations for any (orbit of) edge and for any different $g_i$ that we can "read" in the line $\ell$ of the lamination.\par
Now we will restrict our attention on automorphisms of $A_0\coloneqq \mathrm{ker}(\sigma)\cap \mathrm{Out}(G,\{G_i\}^{(t)})$. In this case, if $[\psi]\in A_0$, then the equation that we get from the lamination, as before, can
be written as $g_2g_1g_2^{-1}g_e=g_1$ for some $g_2\in G_{\Tilde{v}}$ (as $[\psi]$  induces an inner automorphism on
each $G_{\Tilde{v}}$). For example, if $G_{\Tilde{v}}$ is abelian, this equation immediately implies that $g_e=1$.
\begin{theorem}\label{thm7.4}
    Suppose that every $\mathrm{Inn}(G_i)$ is finite. Then the following hold:
    \begin{enumerate}
        \item There is a normal torsion subgroup $A_0$ of $\mathrm{Stab}_0(\Lambda_{[\phi]}^+)$,  such that the group $\mathrm{Stab}(\Lambda_{[\phi]}^+)/A_0$ has a normal subgroup $B/A_0$ so that 
        \[
        B/A_0\cong \bigoplus_{i=1}^k S_i\leq \bigoplus_{i=1}^k\mathrm{Out}(G_i), \text{ for some } S_i\leq \mathrm{Out}(G_i)
        \]
        and 
        \[
        \mathrm{Stab}(\Lambda_{[\phi]}^+)/B\cong \mathbb{Z}.
        \]
        \item If we further suppose that each $\mathrm{Out}(G_i)$ is virtually torsion free or that $G$ is finitely
generated (i.e every $G_i$ is finitely generated), then $ \mathrm{Stab}(\Lambda_{[\phi]}^+)$  has a (torsion free)
finite index subgroup $K$ such that:
\[
K/B'\cong \mathbb{Z}
\]
for some normal subgroup $B'$ of $K$ where
\[
B'\cong \bigoplus_{i=1}^k S_i', \text{ for some } S_i'\leq \mathrm{Out}(G_i).
\]
    \end{enumerate}
\end{theorem}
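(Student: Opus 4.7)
The strategy is to analyse $\mathrm{Stab}(\Lambda_{[\phi]}^+)$ through the stretching homomorphism $\sigma:\mathrm{Stab}(\Lambda_{[\phi]}^+)\to\mathbb{Z}$ from Proposition \ref{sigma}, together with the natural restriction homomorphism $\rho:\mathrm{Out}_0(G,\mathcal{G})\to\prod_{i=1}^k\mathrm{Out}(G_i)$, whose kernel is $\mathrm{Out}(G,\{G_i\}^{(t)})$. Put
\[
A_0 := \ker(\sigma)\cap \mathrm{Out}(G,\{G_i\}^{(t)}).
\]
Normality of $A_0$ in $\mathrm{Stab}_0(\Lambda_{[\phi]}^+)$ (and indeed in $\mathrm{Stab}(\Lambda_{[\phi]}^+)$) is immediate, since both intersecting subgroups are normal in their respective overgroups; that $\mathrm{Out}(G,\{G_i\}^{(t)})$ is normal in all of $\mathrm{Out}(G,\mathcal{G})$ uses that in a free product $N_G(G_i)=G_i$, so a permutation of the $[G_i]$ transported through an inner-acting outer automorphism remains inner-acting.

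To show $A_0$ is torsion, let $[\psi]\in A_0$. By Lemma \ref{lem7.1} some $[\psi]^N$ fixes a tree $S\in\mathcal{O}$ and acts as the identity on $S/G$. By the discussion preceding the theorem, a representative $\tilde h:S\to S$ of $[\psi]^N$ carries inner-automorphism witnesses $g_v\in G_{\tilde v}$ at each non-free vertex, and the leaf-invariance $\tilde h(\ell)=\ell$ imposes the equations $g_e=[g_1^{-1},g_v]$ wherever $g_1$ appears on a leaf. Reducing modulo the centres yields a homomorphism $A_0\to\prod_i\mathrm{Inn}(G_i)$ whose target is finite by hypothesis. An element of its kernel has restriction to each $G_i$ equal to the identity; choosing a representative $\tilde h$ fixing a non-free vertex pointwise and propagating along edges (using $N_G(G_i)=G_i$ to force the edge twists into $G_{\tilde v_1}\cap G_{\tilde v_2}=\{1\}$) shows that a further power of $\tilde h$ is an inner isometry of $S$, hence trivial in $\mathrm{Out}(G)$. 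Consequently every element of $A_0$ has finite order.

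Set $B := \ker(\sigma)\cap\mathrm{Out}_0(G,\mathcal{G})$, so $A_0\triangleleft B$ and $\rho$ induces an injection $B/A_0\hookrightarrow\prod_i\mathrm{Out}(G_i)$. To realise the image as a direct sum $\bigoplus_i S_i$ with $S_i\leq\mathrm{Out}(G_i)$, one uses that every element of $\ker(\sigma)$ admits an isometry representative on a common tree (after passing to a finite-index subgroup via Lemma \ref{lem7.1}), and that the vertex-wise structure of such isometries lets one write each $[\psi]\in B$ as a product of pairwise commuting outer automorphisms each supported on a single factor $G_i$. The quotient $\mathrm{Stab}_0(\Lambda_{[\phi]}^+)/B$ equals $\mathrm{Im}(\sigma|_{\mathrm{Stab}_0})$, an infinite cyclic group by Corollary \ref{cor6.11} (some power of the exponentially growing $[\phi]$ lies in $\mathrm{Stab}_0\setminus\ker(\sigma)$); absorbing the finite quotient $\mathrm{Stab}/\mathrm{Stab}_0$ into $B$ upgrades this to $\mathrm{Stab}(\Lambda_{[\phi]}^+)/B\cong\mathbb{Z}$. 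For part~(2), choose a torsion-free finite-index subgroup $T\leq\mathrm{Out}(G,\mathcal{G})$, available as $T=\rho^{-1}\bigl(\prod_i T_i\bigr)$ for torsion-free finite-index $T_i\leq\mathrm{Out}(G_i)$ in the first case, and via an analogue of \autocite[Corollary~5.3]{Outspaceprod} in the finitely generated case. Setting $K := \mathrm{Stab}(\Lambda_{[\phi]}^+)\cap T$, torsion-freeness of $K$ kills $A_0$, so $B' := B\cap K$ injects into $\bigoplus_i S_i'\leq\bigoplus_i\mathrm{Out}(G_i)$, while $K/B'$ is a nontrivial subgroup of $\mathrm{Stab}(\Lambda_{[\phi]}^+)/B\cong\mathbb{Z}$, hence isomorphic to $\mathbb{Z}$.

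The main obstacle I anticipate is establishing the direct-sum decomposition $B/A_0\cong\bigoplus_i S_i$: the embedding into the direct product is automatic from the identification $\ker\rho=\mathrm{Out}(G,\{G_i\}^{(t)})$, but realising the image as a genuine direct sum of factor-supported subgroups requires, for each $[\psi]\in B$, the construction of commuting outer automorphisms supported on disjoint free factors whose product recovers $[\psi]$ modulo $A_0$; this should come from the product structure of the stabiliser of a common tree in $\mathcal{O}$, but the argument is delicate. A secondary technicality is the passage from $\mathrm{Stab}_0$ to $\mathrm{Stab}(\Lambda_{[\phi]}^+)$ in the $\mathbb{Z}$-quotient statement: the finite permutation action on $\{[G_1],\ldots,[G_k]\}$ has to be absorbed into $B$ in order to replace the $\mathbb{Z}$-by-finite extension $\mathrm{Stab}/(\ker\sigma\cap\mathrm{Out}_0)$ by the claimed infinite cyclic quotient.
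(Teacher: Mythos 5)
Your overall skeleton is the same as the paper's: the stretching homomorphism $\sigma$, the restriction homomorphism onto $\bigoplus_i\mathrm{Out}(G_i)$ with kernel $\mathrm{Out}(G,\{G_i\}^{(t)})$, the subgroups $A_0=\ker(\sigma)\cap\mathrm{Out}(G,\{G_i\}^{(t)})$ and $B=\ker(\sigma)\cap\mathrm{Stab}_0(\Lambda_{[\phi]}^+)$, the first isomorphism theorem for part (1), and intersection with a torsion-free finite-index subgroup for part (2). The genuine gap is in your proof that $A_0$ is torsion. After reducing to an element that acts as the identity on $S/G$ and whose induced automorphism on each $G_i$ is the identity, you claim that ``propagating along edges, using $N_G(G_i)=G_i$, forces the edge twists into $G_{\tilde v_1}\cap G_{\tilde v_2}=\{1\}$'', so that a further power is inner. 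That step fails: the twist attached to an edge is an element of the stabiliser of \emph{one} endpoint, not of the intersection of the two endpoint stabilisers, and such twists (partial conjugations) act as the identity on the quotient graph and induce the trivial element of every $\mathrm{Out}(G_i)$, yet typically have infinite order in $\mathrm{Out}(G)$. The hypothesis of the theorem does not exclude this: it only asks $\mathrm{Inn}(G_i)$ to be finite, so an elliptic factor $G_i\cong\mathbb{Z}$ is allowed, and the automorphism conjugating another factor by a generator of $G_i$ while fixing everything else is an infinite-order element with exactly the properties your propagation argument would declare virtually inner. What rules such twists out for elements of $A_0$ is precisely membership in $\mathrm{Stab}(\Lambda_{[\phi]}^+)$: the paper chooses the representative so that $\tilde h(\ell)=\ell$ for a leaf $\ell$ containing (the orbit of) every edge and a fixed non-free vertex, reads the equations $\psi(g_1)g_e=g_1$ along $\ell$, and, once a power has trivial restriction to each $G_i$ (finiteness of $\mathrm{Inn}(G_i)$), concludes $g_e=1$ for every edge, so that power is the identity. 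Your earlier sentence with the leaf equation $g_e=[g_1^{-1},g_v]$ is exactly this mechanism; the kernel step must be run through it, not through the tree-propagation shortcut. (Relatedly, your intermediate ``homomorphism $A_0\to\prod_i\mathrm{Inn}(G_i)$'' is not well defined: the inner witness on $G_i$ changes by an arbitrary element of $\mathrm{Inn}(G_i)$ when the representative is altered by an inner automorphism of $G$; the paper sidesteps this by passing to a power of each element separately, which is all that torsion requires.)

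On the two difficulties you flag at the end: neither is actually resolved in the paper. Its proof only produces the embedding $B/A_0\cong\mathrm{Im}(F_B)\leq\bigoplus_i\mathrm{Out}(G_i)$ via the first isomorphism theorem (no genuine direct-sum decomposition into factor-supported subgroups is constructed), and it concludes with $\mathrm{Stab}_0(\Lambda_{[\phi]}^+)/B\cong\mathbb{Z}$ rather than the quotient by the full stabiliser; your ``absorbing $\mathrm{Stab}/\mathrm{Stab}_0$ into $B$'' is not carried out there either. So those concerns match the level of detail of the published argument and are not the missing piece. Part (2) is essentially as in the paper, except that your candidate $T=\rho^{-1}\bigl(\prod_i T_i\bigr)$ need not be torsion free, since it contains $\ker\rho=\mathrm{Out}(G,\{G_i\}^{(t)})$; the paper instead notes that each $G_i$ is centre-by-finite, hence $\mathrm{Out}(G)$ is virtually torsion free by \cite{Outspaceprod}, and then intersects $\mathrm{Stab}(\Lambda_{[\phi]}^+)$ with a torsion-free finite-index subgroup, after which $K\cap A_0=1$, $B'=K\cap B$ embeds in $\bigoplus_i\mathrm{Out}(G_i)$, and $K/B'$ is an infinite subgroup of a group commensurable with $\mathbb{Z}$, as you say.
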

\begin{proof}
    Let’s denote by $B=\mathrm{ker}(\sigma)\cap \mathrm{Stab}_0(\Lambda_{[\phi]}^+)$ and by $A_0=B\cap \mathrm{Out}(G,\{G_i\}^{(t)})$. For any $[\psi]\in A_0$, after possibly changing it by some power, we can assume that the induced  automorphism $[\psi]_i$ on each $G_i$  is the identity (since $\mathrm{Inn}(G_i)$ is finite for every $i$). In
other words, there is an $\mathcal{O}$-map $\Tilde{h}:S\to S$ representing $[\psi]$, such that for any edge $\Tilde{e}$ in the
fundamental domain of $S$, we have that $\Tilde{h}(\Tilde{e})=g_e\Tilde{e}$. Now, exactly as in the discussion
above, since (the orbit of) every edge $\Tilde{e}$ appears in any line of the lamination, we have that
for every $\Tilde{e}$ we may get an equation of the form $\psi(g_1)g_e=g_1g_e=g_1$ which implies that $g_e=1$. In particular, $\Tilde{h}$ is the identity on the level of the tree, which means that $[\psi]$ is the identity.
It follows that $A_0$ is a torsion subgroup (i.e. every element of $A_0$ has finite order).\par
By considering the direct product over all $G_i$'s we get a (surjective) homomorphism
\[
F:\mathrm{Out}_0(G,\mathcal{G})\to\bigoplus_{i=1}^k\mathrm{Out}(G_i)
\]
with $\mathrm{ker}(F)=\mathrm{Out}(G,\{G_i\}^{(t)})$. We now restrict the natural map $F$ to the subgroup $B$. In this case, we
get the restriction homomorphism
\[
F_B:B\to \mathrm{Inn}(F_B)\leq \bigoplus_{i=1}^k\mathrm{Out}(G_i).
\]
By the first isomorphism theorem, we have that $B/\mathrm{ker}(F_B)$ is isomorphic to $\mathrm{Im}(F_B)\leq \bigoplus_{i=1}^k\mathrm{Out}(G_i)$. In
addition, $\mathrm{ker}(F_B)=B\cap \mathrm{Out}(G,\{G_i\}^{(t)})=A_0$. Then, the first part of the theorem follows, as $\mathrm{Stab}_0(\Lambda_{[\phi]}^+)/B\cong \mathbb{Z}$.\par
The second part is an easy application of the first part, since our assumptions that
each $G_i$ is finitely generated implies that $\mathrm{Out}(G)$  is virtually torsion free and hence the torsion
subgroup $A_0$ is finite. Indeed, since the quotient $G_i/Z(G_i)\cong \mathrm{Inn}(G_i)$  is finite for every $i$, it follows that every $G_i$ has an abelian subgroup $H_i$ of finite index, and therefore $\mathrm{Out}(G_i)$ is virtually torsion free. By \cite{Outspaceprod}, $\mathrm{Out}(G)$ is also virtually torsion free.\par
By considering a finite index torsion free subgroup $K$ of $\mathrm{Stab}(\Lambda_{[\phi]}^+)$, we have that $K\cap A_0=1$ and by setting $B'=K\cap B$  the result follows by the first part. 
\end{proof}

An immediate corollary of the previous theorem, is the following:

\begin{theorem}\label{thm7.5}
    Let $[\phi]\in\mathrm{Out}(G,\mathcal{G})$ be an iwip outer automorphism of $G$. Suppose that every $\mathrm{Aut}(G_i)$ is finite and either $G$ is finitely generated or $\mathrm{Out}(G)$ is virtually torsion free. Then $\mathrm{Stab}(\Lambda_{[\phi]}^+)$  is
virtually (infinite) cyclic.
\end{theorem}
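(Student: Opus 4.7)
The plan is to derive Theorem \ref{thm7.5} as a direct consequence of Theorem \ref{thm7.4} (part 2). First I would observe that the new hypothesis, namely that every $\mathrm{Aut}(G_i)$ is finite, immediately supplies the assumption $\mathrm{Inn}(G_i)$ finite needed for Theorem \ref{thm7.4}, because $\mathrm{Inn}(G_i)$ is a quotient of $G_i$ that embeds into $\mathrm{Aut}(G_i)$. Moreover, the finiteness of $\mathrm{Aut}(G_i)$ also forces each $\mathrm{Out}(G_i)=\mathrm{Aut}(G_i)/\mathrm{Inn}(G_i)$ to be finite, and in particular virtually torsion free. Together with the hypothesis that either $G$ is finitely generated or $\mathrm{Out}(G)$ is virtually torsion free, this verifies the running assumptions of part (2) of Theorem \ref{thm7.4}.

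Next I would apply Theorem \ref{thm7.4} (part 2) to produce a torsion free finite index subgroup $K\leq \mathrm{Stab}(\Lambda_{[\phi]}^+)$ fitting in a short exact sequence
\[
1\to B'\to K\to\mathbb{Z}\to 1,
\]
where $B'\cong \bigoplus_{i=1}^k S_i'$ for some subgroups $S_i'\leq\mathrm{Out}(G_i)$. Since each $\mathrm{Out}(G_i)$ is finite by the above observation, the direct sum $\bigoplus_{i=1}^k S_i'$ is finite, so $B'$ is a finite subgroup of $K$. But $K$ is torsion free, and a torsion free group has no nontrivial finite subgroups; hence $B'=1$.

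It then follows that $K\cong \mathbb{Z}$, and since $K$ has finite index in $\mathrm{Stab}(\Lambda_{[\phi]}^+)$, we conclude that $\mathrm{Stab}(\Lambda_{[\phi]}^+)$ is virtually infinite cyclic, as claimed. Essentially no obstacle remains once Theorem \ref{thm7.4} is in hand; the only subtle point is to confirm that the hypothesis on $\mathrm{Aut}(G_i)$ in the present theorem simultaneously provides both the finiteness of $\mathrm{Inn}(G_i)$ required at the outset of Theorem \ref{thm7.4} and the finiteness of $\mathrm{Out}(G_i)$ that collapses the group $B'$ produced by its conclusion.
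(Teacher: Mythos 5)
Your proposal is correct and follows essentially the same route as the paper: both deduce from the finiteness of each $\mathrm{Aut}(G_i)$ that $\mathrm{Inn}(G_i)$ and $\mathrm{Out}(G_i)$ are finite, apply Theorem \ref{thm7.4}~(2) to get a torsion free finite index subgroup $K$ with $K/B'\cong\mathbb{Z}$, and conclude $B'=1$ since a finite subgroup of a torsion free group is trivial, so $K\cong\mathbb{Z}$ and $\mathrm{Stab}(\Lambda_{[\phi]}^+)$ is virtually infinite cyclic. Your explicit check that the hypotheses of Theorem \ref{thm7.4} are supplied by the finiteness of the $\mathrm{Aut}(G_i)$ is a welcome, slightly more careful rendering of what the paper leaves implicit.
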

\begin{proof}
    Since the groups $\mathrm{Out}(G_i)$ and $\mathrm{Inn}(G_i)$ are finite, the group $B'$ in Theorem \ref{thm7.4} is finite and it is contained in a
torsion free-group $K$, which means that $B'=1$.  As a consequence, $K$ is isomorphic to $\mathbb{Z}$. This implies that  $\mathrm{Stab}(\Lambda_{[\phi]}^+)$ has a finite index subgroup isomorphic to $\mathbb{Z}$, i.e. it is virtually
infinite cyclic. 
\end{proof}

\section{Applications}\label{appsection8}
We start with applications of our second main theorem regarding the relative centralisers of iwip automorphisms. Recall that the relative centraliser of $[\phi]\in\mathrm{Out}(G,\mathcal{G})$ is defined as $C([\phi])=\{[\psi]\in\mathrm{Out}(G,\mathcal{G}),\, [\psi]\cdot [\phi]=[\phi]\cdot [\psi]\}$.

\begin{theorem}\label{thm7.6}
    Let $[\phi]\in\mathrm{Out}(G,\mathcal{G})$ be an iwip outer automorphism of $G$. Then there is a normal virtually elliptic
subgroup $B$ of $C([\phi])$ such that $C([\phi])$ is a cyclic extension of $B$.
\end{theorem}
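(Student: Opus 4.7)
The plan is to leverage Theorem \ref{thm7.2} directly, using the containment $C([\phi])\leq \mathrm{Stab}(\Lambda_{[\phi]}^+)$ already recorded in the preliminaries. Since the stretching homomorphism $\sigma:\mathrm{Stab}(\Lambda_{[\phi]}^+)\to\mathbb{Z}$ of Proposition \ref{sigma} is defined on the whole stabiliser, it restricts to the centraliser, and the correct candidate for the promised subgroup is
\[
B\coloneqq C([\phi])\cap \ker(\sigma).
\]

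First, I would verify normality: $\ker(\sigma)$ is normal in $\mathrm{Stab}(\Lambda_{[\phi]}^+)$, so its intersection with the subgroup $C([\phi])$ is normal in $C([\phi])$. Second, I would check that $B$ is virtually elliptic: every element of $B$ lies in $\ker(\sigma)=A$, so by the conclusion of Theorem \ref{thm7.2} each $[\psi]\in B$ has a positive power $[\psi]^k$ that fixes some point of $\mathcal{O}$. Third, the first isomorphism theorem applied to the restriction $\sigma|_{C([\phi])}:C([\phi])\to \mathbb{Z}$ identifies $C([\phi])/B$ with a subgroup of $\mathbb{Z}$, and hence exhibits $C([\phi])$ as a cyclic extension of $B$.

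As a final remark in the proof, to strengthen ``cyclic'' to ``infinite cyclic'' one notes that $[\phi]\in C([\phi])$ is iwip and therefore exponentially growing, so by Corollary \ref{cor6.11} we have $[\phi]\notin \ker(\sigma)$; consequently $\sigma(C([\phi]))$ is a nontrivial subgroup of $\mathbb{Z}$ and $C([\phi])/B\cong\mathbb{Z}$.

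There is essentially no obstacle in this argument: the entire work has been absorbed into Theorem \ref{thm7.2} and Corollary \ref{cor6.11}, and the statement of Theorem \ref{thm7.6} reduces to the formal observation that short exact sequences intersect well with subgroups. The only point requiring any care is the remark that $C([\phi])$ does sit inside $\mathrm{Stab}(\Lambda_{[\phi]}^+)$ — which is immediate because commuting with $[\phi]$ forces one to preserve the canonically defined lamination $\Lambda_{[\phi]}^+$ — together with the invocation of Corollary \ref{cor6.11} to guarantee that the cyclic quotient is infinite rather than merely trivial.
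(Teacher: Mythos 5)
Your proposal is correct and follows exactly the paper's own argument: the paper likewise notes $C([\phi])\leq \mathrm{Stab}(\Lambda_{[\phi]}^+)$ and takes $B=C([\phi])\cap\ker(\sigma)$, with the cyclic quotient coming from the restriction of the stretching homomorphism of Theorem \ref{thm7.2}. The extra remark invoking Corollary \ref{cor6.11} to see the quotient is in fact infinite cyclic is a harmless strengthening consistent with the paper.
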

\begin{proof}
 This is an immediate application of Theorem \ref{thm7.2} as $C([\phi])\leq \mathrm{Stab}(\Lambda_{[\phi]}^+)$. The result follows
by considering the subgroup $B=C([\phi])\cap \mathrm{ker}(\sigma)$.   
\end{proof}

In \cite{francaviglia2020minimally} Francaviglia, Martino and the second named author, proved that for any iwip automorphism $[\phi]\in\mathrm{Out}(G,\mathcal{G})$ the set of points $M_{[\phi]}$ that admit train track representatives is locally finite. In fact, $M_{[\phi]}$ coincides with the set of minimally displaced points for $[\phi]$, under the
(asymmetric) Lipschitz metric, i.e. the points $S\in\mathcal{O}$ such that $d(S,[\phi](S))=\ln{\lambda_{[\phi]}}$. (For
the definitions and more details for the Lipschitz metric and the Min-Set $M_{[\phi]}$, see \cite{francaviglia2015stretching}).\par
Combining the locally finiteness of $M_{[\phi]}$ with Theorem \ref{thm7.2} and the
action of $C([\phi])$ on $M_{[\phi]}$, we get that any point of $M_{[\phi]}$ is virtually fixed by any element of $\mathrm{ker}(\sigma)\cap C([\phi])$. We state the main result of \cite{francaviglia2020minimally}.
\begin{theorem}[{{\cite[Corollary~6.10.]{francaviglia2020minimally}}}]\label{thm7.7}
    Let $[\phi]\in\mathrm{Out}(G,\mathcal{G})$ be an iwip outer automorhism. Then $M_{[\phi]}$ is (uniformly) locally finite.
\end{theorem}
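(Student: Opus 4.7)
The plan is to combine the polyhedral structure of $\mathcal{O}$ with the rigidity of train track maps for iwip automorphisms, analyzing how the Lipschitz displacement function $\delta_{[\phi]}(S) = d(S,[\phi](S))$ attains its minimum $\ln \lambda_{[\phi]}$ on $\mathcal{O}$. First I would recall that on each open simplex $\Delta(T) \subseteq \mathcal{O}$, the displacement $\delta_{[\phi]}$ is a supremum of finitely many linear ratios of edge lengths (coming from the finitely many candidate optimal $\mathcal{O}$-maps from $T$ to $[\phi](T)$ that factor through the combinatorial type of $T/G$), so after a logarithmic change of coordinates it becomes piecewise linear and convex. Consequently, $M_{[\phi]} \cap \Delta(T)$ is cut out by finitely many linear equalities and is a rational subpolytope of $\Delta(T)$.

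The main step is to control which simplex types can host a minimally displaced point. At any $T \in M_{[\phi]}$ there is a train track representative $\tilde f:T\to T$ realizing the stretch $\lambda_{[\phi]}$, and by Perron--Frobenius theory the transition matrix $M(\tilde f)$ (up to permutations of basis edges) is strongly constrained by $\lambda_{[\phi]}$: only finitely many aperiodic irreducible non-negative integer matrices of bounded size have a given Perron--Frobenius eigenvalue, and the size of $M(\tilde f)$ is bounded in terms of the Kurosh rank of $G$. This produces a finite list of combinatorial data $(T/G, M(\tilde f))$ that may occur, and in particular an a priori bound on the dimension of the ambient simplex $\Delta(T)$.

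Next I would use the iwip hypothesis to show that $M_{[\phi]} \cap \Delta(T)$ is low-dimensional. A positive-dimensional family of minimally displaced points inside $\Delta(T)$ would correspond to a one-parameter family of edge-length assignments all giving the same maximal stretch, which via the uniqueness (up to scaling) of the Perron--Frobenius eigenvector for an irreducible matrix forces a proper invariant subgraph of $T$ carrying a nontrivial free factor system, contradicting irreducibility of $[\phi]$. This pins down $M_{[\phi]} \cap \Delta(T)$ to at most a single orbit of points (indeed, generically the unique Perron--Frobenius eigenvector).

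For the concluding uniform local finiteness, fix $R > 0$ and $T_0 \in M_{[\phi]}$. Since $d_{\mathrm{Lip}}$ controls the ratios of edge lengths, only finitely many simplex closures can meet $B(T_0, R)$, so combining with the previous finiteness statements one obtains a finite bound on $|M_{[\phi]} \cap B(T_0, R)|$. The uniformity (independence of the bound on $T_0$) follows because all ingredients --- the list of admissible combinatorial types, the number of candidate Perron--Frobenius matrices, and the edge-count bounds --- depend only on $[\phi]$ and on the Kurosh rank of $G$, not on the basepoint. The hard part is step three: ruling out positive-dimensional components of $M_{[\phi]} \cap \Delta(T)$ in all degenerate configurations, which is the technical heart of \cite{francaviglia2020minimally} and requires a careful case analysis separating free and non-free vertex behavior.
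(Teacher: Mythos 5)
First, a remark on the comparison itself: the paper does not prove this statement. Theorem \ref{thm7.7} is imported verbatim from \autocite[Corollary~6.10]{francaviglia2020minimally} and is used as a black box (in the proof of Theorem \ref{thm7.8}), so there is no internal proof to match your sketch against; it has to stand on its own against the argument of that paper, which is a substantial piece of work. As it stands, your sketch has a genuine gap, and it sits exactly where the real difficulty lies. In your concluding step you assert that, because the Lipschitz metric controls ratios of edge lengths, only finitely many simplex closures can meet a ball $B(T_0,R)$ in $\mathcal{O}$. That is the kind of statement that holds in the classical outer space $CV_n$, where point stabilisers are finite and the action of $\mathrm{Out}(F_n)$ is proper; but in the relative outer space $\mathcal{O}(\mathcal{G})$ of a free product whose factors $G_i$ may be infinite, stabilisers of points are typically infinite, the action is not proper, and metric balls can meet infinitely many open simplices. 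The assertion that a bounded region meets only finitely many simplices carrying points of $M_{[\phi]}$ is precisely the content of the theorem you are trying to prove, so the argument begs the question at the decisive moment. Your finite list of combinatorial data $(T/G, M(\tilde f))$ does not close this gap: infinitely many differently marked trees realizing the same underlying graph of groups and the same transition matrix could a priori accumulate in a bounded region, and ruling that out (without properness) is the technical heart of \cite{francaviglia2020minimally}, not a formal consequence of Perron--Frobenius finiteness.

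A second, smaller problem is your step claiming that $M_{[\phi]}\cap\Delta(T)$ is at most a single orbit of points. Uniqueness of the Perron--Frobenius eigenvector pins down the metric only after the combinatorial train track map has been fixed; optimal maps realizing the displacement need not send vertices to vertices, different points of the same open simplex may be minimally displaced via different maps, and the implication ``positive-dimensional family of minimally displaced metrics $\Rightarrow$ proper invariant subgraph $\Rightarrow$ contradiction with irreducibility'' is asserted rather than proved. Fortunately this step is also not what the theorem requires: (uniform) local finiteness, in the form it is used in Theorem \ref{thm7.8}, is a statement about how many simplices meet $M_{[\phi]}$ near a point, not about the dimension of the intersection with each simplex (which is in any case a polytope, by the candidate description of the displacement function). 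So even granting your step three, the proof would still fail at the final counting step described above.
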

We also list some well known properties of $M_{[\phi]}$ that we will need in the proof of the following theorem (for
details, see \cite{francaviglia2015stretching}).
\begin{itemize}
    \item $T\in M_{[\phi]}$ if and only if $d(T,[\phi](T))=\inf_{S}d(S,[\phi](S))=\ln{\lambda_{[\phi]}}$.
    \item $[\phi](T)\in M_{[\phi]}$, for every $T\in M_{[\phi]}$.
    \item If $T\in M_{[\phi]}$, $[\psi]\in C([\phi])$, then $[\psi](T)\in M_{[\phi]}$, since
    \[
    d([\psi](T),[\phi]([\psi](T)))=d(T,[\psi]^{-1}[\phi][\psi](T))=d(T,[\phi](T))=\ln{\lambda_{[\phi]}}.
    \]
\end{itemize}

\begin{theorem}\label{thm7.8}
    Let $[\phi]\in\mathrm{Out}(G,\mathcal{G})$ be an iwip outer automorhism. Then $C([\phi])$ has a
normal subgroup $B'$ such that $C([\phi])/B'$ is isomorphic to $\mathbb{Z}$. Moreover, for every $T\in M_{[\phi]}$ and for every $[\psi]\in B'$, there is some $k\in\mathbb{Z}$ such that $[\psi]^k(T)=T$.
\end{theorem}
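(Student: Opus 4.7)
The plan is to combine Theorem \ref{thm7.6} (which already produces a virtually elliptic normal subgroup) with the local finiteness statement of Theorem \ref{thm7.7} and the $\mathrm{Out}(G,\mathcal{G})$-invariance of the Lipschitz metric. First I would define $B' = C([\phi]) \cap \mathrm{ker}(\sigma)$, where $\sigma$ is the stretching homomorphism on $\mathrm{Stab}(\Lambda_{[\phi]}^+)$. Since $C([\phi]) \leq \mathrm{Stab}(\Lambda_{[\phi]}^+)$, this $B'$ is normal in $C([\phi])$, and by Corollary \ref{cor6.11} the exponentially-growing element $[\phi]$ itself lies in $C([\phi]) \setminus \mathrm{ker}(\sigma)$. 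Hence $\sigma$ restricted to $C([\phi])$ is a nontrivial homomorphism into $\mathbb{Z}$, so its image is infinite cyclic and $C([\phi])/B' \cong \mathbb{Z}$.

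The heart of the argument is to verify that every $[\psi] \in B'$ virtually fixes each $T \in M_{[\phi]}$. I would fix such $[\psi]$ and $T$, and first invoke Lemma \ref{lem7.1}, which gives a positive integer $m$ and a point $S \in \mathcal{O}$ with $[\psi]^m(S) = S$. The key observation is that the Lipschitz metric on $\mathcal{O}$ is preserved by the $\mathrm{Out}(G,\mathcal{G})$-action (an $\mathcal{O}$-map $f : T_1 \to T_2$ yields, after retwisting the markings, an $\mathcal{O}$-map $[\psi] T_1 \to [\psi] T_2$ with the same Lipschitz constant). Using this together with $[\psi]^{mn}(S) = S$, I would compute
\begin{equation*}
d([\psi]^{mn}(T), S) \;=\; d([\psi]^{mn}(T), [\psi]^{mn}(S)) \;=\; d(T,S),
\end{equation*}
and similarly in the reverse direction $d(S, [\psi]^{mn}(T)) = d(S,T)$, so the whole orbit $\{[\psi]^{mn}(T) : n \in \mathbb{Z}\}$ sits inside a fixed Lipschitz ball around $S$.

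Next I would use the three listed properties of $M_{[\phi]}$ to confirm that the entire orbit $\{[\psi]^{mn}(T) : n \in \mathbb{Z}\}$ stays inside $M_{[\phi]}$: indeed, $[\psi]^{mn} \in C([\phi])$ and $C([\phi])$ preserves $M_{[\phi]}$. By Theorem \ref{thm7.7}, $M_{[\phi]}$ is locally finite, so only finitely many of its points lie in any given Lipschitz ball. Consequently the orbit is a finite subset of $M_{[\phi]}$, and the pigeonhole principle produces integers $n_1 \neq n_2$ with $[\psi]^{mn_1}(T) = [\psi]^{mn_2}(T)$, i.e.\ $[\psi]^k(T) = T$ for $k = m(n_1 - n_2) \neq 0$.

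The only step requiring care is the Lipschitz-invariance computation combined with the asymmetry of the Lipschitz metric; one must check that local finiteness of $M_{[\phi]}$ in Theorem \ref{thm7.7} applies in the sense needed, i.e.\ that balls (in either direction of the asymmetric metric) meet $M_{[\phi]}$ in finite sets. This is already encoded in the uniform local finiteness of \autocite[Corollary~6.10]{francaviglia2020minimally}, so the argument goes through; the remaining details are routine triangle-inequality manipulations.
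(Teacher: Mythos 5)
Your setup is fine and matches the paper's: taking $B'=C([\phi])\cap\mathrm{ker}(\sigma)$, noting $C([\phi])\leq\mathrm{Stab}(\Lambda_{[\phi]}^+)$, using Corollary \ref{cor6.11} to see $[\phi]\notin\mathrm{ker}(\sigma)$ so that $C([\phi])/B'$ is a nontrivial subgroup of $\mathbb{Z}$, and then using Theorem \ref{thm7.6} (or Lemma \ref{lem7.1}) to get $S$ with $[\psi]^m(S)=S$, the isometric action to see that $d(S,[\psi]^{mn}(T))$ is independent of $n$, and the fact that $C([\phi])$ preserves $M_{[\phi]}$.

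However, there is a genuine gap in your last step. Theorem \ref{thm7.7} (and \autocite[Corollary~6.10]{francaviglia2020minimally}) asserts that $M_{[\phi]}$ is locally finite in the sense that a bounded region meets only finitely many \emph{simplices} of $M_{[\phi]}$; it does \emph{not} say that a ball meets $M_{[\phi]}$ in finitely many \emph{points}, and that stronger statement is false in general: $M_{[\phi]}$ is a union of portions of simplices and contains continua (for instance it contains fold paths/axes of $[\phi]$), so your pigeonhole argument producing $n_1\neq n_2$ with $[\psi]^{mn_1}(T)=[\psi]^{mn_2}(T)$ does not follow. The correct conclusion from local finiteness is only that two iterates $[\psi]^{mn_1}(T)$ and $[\psi]^{mn_2}(T)$ lie in the \emph{same simplex}. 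One then needs the additional argument the paper gives: the corresponding power of $[\psi]$ preserves that simplex $\Delta_T$, hence fixes its centre and induces a simplicial self-map of the underlying quotient graph; since the group of such graph automorphisms is finite, a further power acts as the identity on the whole simplex and in particular fixes $T$. Without this step (or some substitute for it), your proof establishes only that the $[\psi]$-orbit of $T$ returns to the simplex of $T$, not that $T$ itself is virtually fixed.
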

\begin{proof}
    Let’s consider the subgroup $B'=\mathrm{ker}(\sigma)\cap C([\phi])$. By Theorem \ref{thm7.6}, any element of $B'$ virtually fixes some point $S\in \mathcal{O}$. Let $T\in M_{[\phi]}$ and let’s suppose that $[\psi]$ fixes $S$ (after
changing $[\psi]$ with the appropriate power). In this case, for any positive integer $k$, we have
\[
d(S,[\psi]^k(T))=d([\psi]^k(S),[\psi]^k(T))=d(S,T).
\]
As we have fixed $S$ and $T$, this implies that the quantities $d(S,[\psi]^k(T))$ are independent of $k$ and equal.\par
Therefore, by Theorem \ref{thm7.7} we have that there are two iterates $[\psi]^k(T)$, $[\psi]^m(T)$ which
are in the same simplex. If $m>k$, the automorphism $[\psi]^{m-k}$ acts as an isometry on the
centre $X$ of the simplex $\Delta_T=\Delta$ (i.e. $X$ has edges with equal lengths) corresponding to $T$, which means that after possibly
taking a further power, it acts as the identity on the quotient graph corresponding to $\Delta$. It follows that $[\psi]$  virtually fixes any point of $\Delta$ and, in particular, it virtually fixes $T$.\par
As $T$ was an arbitrary point of $M_{[\phi]}$,  any element of $M_{[\phi]}$ is (virtually) fixed by any element of $B'$.
\end{proof}

As an immediate corollary, assuming that the intersection of the centraliser with
the stabilisers of some point which admits a train track representative is finite, we obtain that $C([\phi])$ is "small".

\begin{corollary}\label{col7.10}
     Let $[\phi]\in\mathrm{Out}(G,\mathcal{G})$ be an iwip outer automorhism. If there is some $T\in M_{[\phi]}$ with $C([\phi])\cap \mathrm{Stab}(T)$ finite, then:
     \begin{enumerate}
         \item $C([\phi])$ is a normal $\mathbb{Z}$-extension of a torsion group.
         \item If we further assume that $\mathrm{Out}(G,\mathcal{G})$ is virtually torsion free, then $C([\phi])$ is virtually $\mathbb{Z}$.
     \end{enumerate}
\end{corollary}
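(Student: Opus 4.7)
The plan is to extract both conclusions essentially for free from Theorem \ref{thm7.8}, using the finiteness hypothesis to upgrade ``virtually fixes $T$'' to ``has finite order''.

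First, I would apply Theorem \ref{thm7.8} to obtain the normal subgroup $B'\trianglelefteq C([\phi])$ with $C([\phi])/B'\cong\mathbb{Z}$, together with the key property that for the given $T\in M_{[\phi]}$ and every $[\psi]\in B'$ there exists some positive integer $k=k([\psi])$ with $[\psi]^k(T)=T$. In other words, every element of $B'$ has an iterate lying in $C([\phi])\cap\mathrm{Stab}(T)$, since $B'\leq C([\phi])$ by construction.

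Next, I would exploit the hypothesis that $F:=C([\phi])\cap\mathrm{Stab}(T)$ is finite. Write $N=|F|$. For any $[\psi]\in B'$, choose $k$ as above so that $[\psi]^k\in F$; then $[\psi]^{kN}=([\psi]^k)^N=\mathrm{id}$. Hence every element of $B'$ is of finite order, i.e. $B'$ is a torsion group. Together with $C([\phi])/B'\cong\mathbb{Z}$, this immediately yields conclusion (1): $C([\phi])$ is a normal $\mathbb{Z}$-extension of the torsion group $B'$.

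For conclusion (2), I would invoke the additional hypothesis that $\mathrm{Out}(G,\mathcal{G})$ is virtually torsion free. Let $K\leq\mathrm{Out}(G,\mathcal{G})$ be a torsion-free subgroup of finite index. Then $B'\cap K$ is a torsion-free subgroup of the torsion group $B'$, hence trivial. Consequently $B'$ injects into the finite quotient $\mathrm{Out}(G,\mathcal{G})/K$ (via the natural map, restricted to $B'$), which forces $B'$ to be finite. Since $C([\phi])/B'\cong\mathbb{Z}$ and $B'$ is finite, $C([\phi])$ is virtually infinite cyclic, proving (2).

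There is no real obstacle here: the entire argument is a direct corollary of Theorem \ref{thm7.8}, with the finiteness of $C([\phi])\cap\mathrm{Stab}(T)$ serving only to promote the ``virtually elliptic at $T$'' property to genuine finite order, and virtual torsion-freeness serving only to collapse a torsion subgroup to a finite one. The only small point to be careful about is making sure that the integer $k$ in Theorem \ref{thm7.8} is taken positive (which it may be assumed to be, since if $[\psi]^k(T)=T$ then $[\psi]^{|k|}(T)=T$ as well).
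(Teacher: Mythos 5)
Your proposal is correct and follows essentially the same route as the paper: Theorem \ref{thm7.8} gives the normal subgroup $B'$ with $C([\phi])/B'\cong\mathbb{Z}$, the finiteness of $C([\phi])\cap\mathrm{Stab}(T)$ forces every element of $B'$ to be torsion, and virtual torsion-freeness makes the torsion subgroup $B'$ finite (your coset-space injection $B'\hookrightarrow \mathrm{Out}(G,\mathcal{G})/K$ is exactly the standard proof of the fact the paper invokes, and works even if $K$ is not normal). No gaps.
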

\begin{proof}
    The first part is an immediate application of Theorem \ref{thm7.8}, as, under our assumption,
any automorphism of $B'$ must be a torsion element. Lastly,  every torsion subgroup is finite, in a virtually torsion free group, hence the second part follows.
\end{proof}

\begin{remark}\label{gibcent}
We note that there are iwip outer automorphisms whose  (relative) centralisers fail to be virtually cyclic. We fix a free product decomposition $G=G_1\ast\langle a\rangle \ast\langle b\rangle$ where $a,b$ are of infinite order and we denote by $F_2=\langle a\rangle \ast\langle b\rangle$ the “free part” of $G$. Let $\mathcal{O}$ be the corresponding relative outer space, associated to the free product
decomposition above (here the only elliptic free factor is $G_1$). We define the (unique) automorphism $\phi\in\mathrm{Aut}(G,\mathcal{G})$ which is given by the relations
\[
\phi(g)=\begin{cases}
    g,& g\in G_1 \\ bg_1,& g=a \\ ab,& g=b
\end{cases}
\]
for some non-trivial $g_1\in G_1$. It is easy to see
that the corresponding outer automorphism $[\phi]\in\mathrm{Out}(G,\mathcal{G})$ is iwip. Note that every factor automorphism of $G_1$, which fixes $g_1$, commutes with $\phi$.  It
follows that $C([\phi])$  contains a big subgroup of $\mathrm{Out}(G,\mathcal{G})$ isomorphic to a subgroup of the
automorphisms of $G_1$  consisting of those automorphisms fixing $g_1$.
\end{remark}
It is well known that for an iwip automorphism $[\phi]$ of a free group, any element of its centraliser has a common power with $[\phi]$. As before, this does not hold in our case, but we are able to give a description involving stabilisers of points in $\mathcal{O}$.

\begin{corollary}\label{col7.12}
    Let $[\phi]\in\mathrm{Out}(G,\mathcal{G})$ be an iwip outer automorhism. If $[\psi]\in C([\phi])$ is not virtually elliptic and $T\in M_{[\phi]}$, then there are (non-zero) powers $M',N'$ such that $[\phi]^{M'}=[\psi]^{N'}\cdot [\alpha]$, where $[\alpha]\in C([\phi])\cap \mathrm{Stab}(T)$ (where $\mathrm{Stab}(T)$ denotes the stabiliser of
the tree $T$ in $\mathrm{Out}(G,\mathcal{G})$). In particular, if there is some $T\in M_{[\phi]}$ such that $C([\phi])\cap \mathrm{Stab}(T)$ is finite, then $[\phi]$ and $[\psi]$ have common non-zero powers.  
\end{corollary}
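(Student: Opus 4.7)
The plan is to exploit the $\mathbb{Z}$-quotient structure of $C([\phi])$ furnished by Theorem \ref{thm7.8} together with the fact that elements of $B'$ virtually fix every point of $M_{[\phi]}$. Recall that $B' = \mathrm{ker}(\sigma)\cap C([\phi])$, where $\sigma:\mathrm{Stab}(\Lambda)\to\mathbb{Z}$ is the stretching homomorphism of Proposition \ref{sigma}, and $C([\phi])/B'\cong\mathbb{Z}$.

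First I would verify that both $[\phi]$ and $[\psi]$ map to non-zero elements of $C([\phi])/B'$. Since $[\phi]$ is iwip and hence exponentially growing, Corollary \ref{cor6.11} gives $\sigma([\phi])\neq 0$, so $[\phi]\notin B'$. For $[\psi]$, Theorem \ref{thm7.8} says that every element of $B'$ has a power fixing $T\in M_{[\phi]}$, hence is virtually elliptic; by hypothesis $[\psi]$ is not virtually elliptic, so $[\psi]\notin B'$ and $\sigma([\psi])\neq 0$.

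Next, since $\sigma([\phi])$ and $\sigma([\psi])$ are two non-zero integers, there exist non-zero integers $M,N$ with $M\sigma([\phi])=N\sigma([\psi])$. Setting $[\alpha]_0\coloneqq [\phi]^M[\psi]^{-N}$, we have $[\alpha]_0\in\mathrm{ker}(\sigma)\cap C([\phi])=B'$. The key observation is that $[\alpha]_0$ commutes with $[\psi]$: indeed $[\psi]$ commutes with $[\phi]$, hence with $[\phi]^M$ and with $[\psi]^{-N}$, and therefore with their product $[\alpha]_0$. By Theorem \ref{thm7.8} applied to $[\alpha]_0\in B'$, there is an integer $k\geq 1$ such that $[\alpha]_0^k\in\mathrm{Stab}(T)$. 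Setting $M'=kM$, $N'=kN$ and $[\alpha]=[\alpha]_0^k$, commutativity of $[\psi]$ with $[\alpha]_0$ allows us to compute
\[
[\phi]^{M'}=\bigl([\phi]^M\bigr)^k=\bigl([\psi]^N[\alpha]_0\bigr)^k=[\psi]^{N'}[\alpha],
\]
and by construction $[\alpha]\in C([\phi])\cap\mathrm{Stab}(T)$, which is the desired decomposition.

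For the ``in particular'' statement, suppose $C([\phi])\cap\mathrm{Stab}(T)$ is finite. Then the element $[\alpha]$ produced above is a torsion element; if $m$ denotes its order, then raising $[\phi]^{M'}=[\psi]^{N'}[\alpha]$ to the $m$-th power and using once more the commutativity of $[\psi]$ and $[\alpha]$ gives $[\phi]^{mM'}=[\psi]^{mN'}[\alpha]^m=[\psi]^{mN'}$, a common non-zero power. The only subtlety in the argument is ensuring that $[\alpha]$ genuinely fixes $T$ rather than merely virtually fixing it, which is handled by the passage from $[\alpha]_0$ to $[\alpha]_0^k$ and the commutativity bookkeeping; everything else is a direct consequence of Theorem \ref{thm7.8}.
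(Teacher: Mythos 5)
Your proof is correct and follows essentially the same route as the paper's: both use the stretching homomorphism to produce an element $[\phi]^M[\psi]^{-N}\in\ker(\sigma)\cap C([\phi])$, apply Theorem \ref{thm7.8} to get a power fixing $T$, and rearrange using the commutativity of $[\phi]$ and $[\psi]$. Your write-up is merely a bit more explicit about why $\sigma([\psi])\neq 0$ and about the final ``in particular'' step, which the paper leaves implicit.
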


\begin{proof}
   We denote by $\sigma$ the stretching map, restricted on $C([\phi])$ and we treat $\sigma(C([\phi]))$ as the additive group of integers $\mathbb{Z}$. As $[\phi]$ and $[\psi]$ are not virtually elliptic, $\sigma([\phi])$ and $\sigma([\psi])$ are non-zero integers. In particular, it follows that there are two (non-zero) integers $M,N$ such that $\sigma([\psi]^M)=\sigma([\phi]^N)$. In that case, $\sigma([\psi]^M[\phi]^{-N})=0$, hence $[\theta]=[\psi]^M[\phi]^{-N}\in \mathrm{ker}(\sigma)$. Theorem \ref{thm7.8} implies that  there is some power $k$ such that $[\theta]^k(T)=T$. As $[\psi]\in C([\phi])$, it follows that $[\psi]^{-kM}[\phi]^{kN}(T)=T$, or, equivalently, $[\psi]^{-kM}[\phi]^{kN}=[\alpha]$, where $[\alpha]\in \mathrm{Stab}(T)$. As a consequence, $[\phi]^{kN}=[\psi]^{kM}[\alpha]$ and the corollary follows by setting $M'=kM$ and $N'=kN$.
\end{proof}

By combining our two main theorems, we are able to prove a dichotomy result for two iwip automorphisms $[\phi],[\psi]\in\mathrm{Out}(G,\mathcal{G})$ of a free product $G$. Intuitively, we show that either $[\phi]$ and $[\psi]$ are "independent", i.e. they have some iterates generating a free subgroup of rank two, or they are "strongly related", i.e. some iterates of them differ by an element which virtually fixes a point of $\mathcal{O}$.

\begin{theorem}\label{main2}
    Let $[\phi],[\psi]$ be two iwip outer automorphisms in $\mathrm{Out}(G,\mathcal{G})$. Then we have the following dichotomy:
    \begin{enumerate}
        \item either $\langle [\phi]^m,[\psi]^n\rangle\cong F_2$ for some $m,n\in\mathbb{N}$, or
        \item $[\phi]^k=[\psi]^l\cdot [\alpha]$ for some $k,l\in\mathbb{N}$ and $[\alpha]\in\mathrm{Out}(G,\mathcal{G})$ such that $[\alpha]$ (virtually) fixes an element of $\mathcal{O}$.
    \end{enumerate}
\end{theorem}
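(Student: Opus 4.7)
The plan is to split into two cases according to whether some positive iterate of $[\psi]$ stabilises $\Lambda := \Lambda_{[\phi]}^+$, and then to invoke one of our two main technical results, Theorem \ref{main1} or Theorem \ref{thm7.2}, depending on the case.

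First I would treat the case in which $[\psi]^k \in \mathrm{Stab}(\Lambda)$ for some $k \in \mathbb{N}$. Here both $[\phi]$ and $[\psi]^k$ lie in $\mathrm{Stab}(\Lambda)$ and are iwip, hence of exponential growth, so Corollary \ref{cor6.11} gives $\sigma([\phi]) \neq 0$ and $\sigma([\psi]^k) \neq 0$ for the stretching homomorphism $\sigma:\mathrm{Stab}(\Lambda)\to\mathbb{Z}$ of Proposition \ref{sigma}. Choosing non-zero integers $m,n$ with $m\sigma([\phi]) = n\sigma([\psi]^k)$, the element $[\alpha] := [\psi]^{-kn}[\phi]^m$ lies in $\ker(\sigma)$, which by Theorem \ref{thm7.2} is precisely the virtually elliptic subgroup $A$, so $[\alpha]$ virtually fixes a point of $\mathcal{O}$. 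Rearranging the identity and absorbing any sign discrepancy between $\sigma([\phi])$ and $\sigma([\psi]^k)$ into a conjugate of $[\alpha]$ (which remains virtually elliptic) produces an equation of the form $[\phi]^{k_0} = [\psi]^{l_0} [\alpha']$ with $k_0,l_0 \in \mathbb{N}$, which is alternative (2).

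In the other case, no positive power of $[\psi]$ lies in $\mathrm{Stab}(\Lambda)$. My first step would be to upgrade this to the statement that no positive power of $[\psi]$ sends $\Lambda_{[\phi]}^+$ to $\Lambda_{[\phi]}^-$ either: by the uniqueness of the pairing (Lemma \ref{lem3.2.4}) such a swap would square to an element of $\mathrm{Stab}(\Lambda_{[\phi]}^+)$, contradicting the case hypothesis. Consequently, for every $\epsilon,\eta \in \{+1,-1\}$, the iwip lamination $[\psi]^{\epsilon}(\Lambda_{[\phi]}^{\eta})$ is distinct from both $\Lambda_{[\phi]}^+$ and $\Lambda_{[\phi]}^-$. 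Applying Proposition \ref{prp6.9} first to $[\phi]$ and then to $[\phi]^{-1}$, this forces the generic leaves of each of the four laminations $[\psi]^{\pm 1}(\Lambda_{[\phi]}^{\pm})$ to be weakly attracted to $\Lambda_{[\phi]}^+$ under $[\phi]$ and to $\Lambda_{[\phi]}^-$ under $[\phi]^{-1}$. Thus the hypotheses of Theorem \ref{main1} are met with $H = \langle [\phi], [\psi]\rangle$, and, as in the proof of Lemma \ref{lem3.4.2}, one concludes $\langle [\phi]^N, [\psi]^N \rangle \cong F_2$ for every sufficiently large $N$, which is alternative (1).

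The main obstacle is really the bookkeeping in the second case to verify all four weak-attraction statements required by Theorem \ref{main1}. The crucial ingredient is the uniqueness of the pairing of laminations (Lemma \ref{lem3.2.4}): it ensures in particular that $\mathrm{Stab}(\Lambda_{[\phi]}^+) = \mathrm{Stab}(\Lambda_{[\phi]}^-)$ and rules out a hidden swap of $\Lambda^+$ and $\Lambda^-$ by some iterate of $[\psi]$ that would bypass the case hypothesis while obstructing the application of Proposition \ref{prp6.9}. Once this combinatorial step is cleared, the theorem is essentially the clean combination of Theorem \ref{main1} and Theorem \ref{thm7.2}.
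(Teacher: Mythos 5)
Your Case 2 contains a genuine gap. To get the hypotheses of Theorem \ref{main1} via Proposition \ref{prp6.9}, you must rule out \emph{all four} coincidences $[\psi]^{\pm1}(\Lambda_{[\phi]}^{\pm})\in\{\Lambda_{[\phi]}^+,\Lambda_{[\phi]}^-\}$, i.e.\ $[\psi]\in\mathrm{Stab}(\Lambda_{[\phi]}^+)$, $[\psi]\in\mathrm{Stab}(\Lambda_{[\phi]}^-)$, $[\psi](\Lambda_{[\phi]}^+)=\Lambda_{[\phi]}^-$ and $[\psi](\Lambda_{[\phi]}^-)=\Lambda_{[\phi]}^+$. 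Your case hypothesis only excludes the first. The exclusion of the others rests on the claims that the $\pm$ pairing is intrinsic, that $\mathrm{Stab}(\Lambda_{[\phi]}^+)=\mathrm{Stab}(\Lambda_{[\phi]}^-)$, and that a ``swap'' squares into $\mathrm{Stab}(\Lambda_{[\phi]}^+)$; none of these follows from Lemma \ref{lem3.2.4}. That lemma's uniqueness is internal to the lamination sets of a \emph{single} automorphism (and for an iwip each of $\mathcal{L}([\phi])$, $\mathcal{L}([\phi]^{-1})$ is a singleton, with full support, so the $F(\cdot)$-criterion carries no information across different automorphisms). Concretely, your squaring argument needs: if $[\psi]^j(\Lambda_{[\phi]}^+)=\Lambda_{[\phi]}^-$ then $[\psi]^j(\Lambda_{[\phi]}^-)=\Lambda_{[\phi]}^+$, i.e.\ that two iwips sharing one of their laminations must share the paired one. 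In the free group case such statements come from the virtual cyclicity of $\mathrm{Stab}(\Lambda^+)$ (BFH Theorem 2.14), but in the free product setting stabilisers can be large (Remark \ref{gibcent}) and nothing of this kind is established in the paper; similarly, the subcase $[\psi]\in\mathrm{Stab}(\Lambda_{[\phi]}^-)\setminus\mathrm{Stab}(\Lambda_{[\phi]}^+)$ falls into your Case 2 but the weak attraction genuinely fails there, so Theorem \ref{main1} cannot be invoked.

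The paper's proof is organised precisely so as to avoid this issue: it applies Proposition \ref{prp6.9} to $\Lambda^{\pm}_{[\phi]}$ and $\Lambda^{\pm}_{[\psi]}$ directly, and the dichotomy is ``either all the weak attractions required by Lemma \ref{lem3.4.2} hold (giving alternative (1)), or some lamination of $[\phi]$ coincides with some lamination of $[\psi]$''. In the latter case one does not need $[\psi]$ to stabilise $\Lambda_{[\phi]}^+$: both $[\phi]$ and $[\psi]$ automatically lie in the stabiliser of the \emph{common} lamination, and the stretching homomorphism on that stabiliser together with Theorem \ref{thm7.2} produces the relation of alternative (2). To repair your argument, either redo the case split along these lines, or supply a proof of the missing cross-automorphism statement (equality of one lamination forces equality of the paired one). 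A secondary point: in your Case 1, the ``absorbing of the sign discrepancy into a conjugate of $[\alpha]$'' cannot produce an identity $[\phi]^{k_0}=[\psi]^{l_0}[\alpha']$ with both exponents positive when $\sigma([\phi])$ and $\sigma([\psi]^k)$ have opposite signs (apply $\sigma$ to see this); this sign bookkeeping is admittedly glossed over in the paper's own statement and proof as well, but a conjugation does not fix it.
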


\begin{proof}
   For $[\phi]$, $[\psi]$ and the corresponding laminations $\Lambda^+=\Lambda_{[\phi]}^+$, $\Gamma^+=\Lambda_{[\psi]}^+$, items $1$ and $2$ of Lemma \ref{lem3.4.2} are satisfied since $[\phi]$ and $[\psi]$  are iwip. Thus, by Proposition \ref{prp6.9}, the forward $[\psi]$-iterates of $\Lambda^+$ weakly converge to $\Gamma^+$ unless $\Lambda^+=\Gamma^-$. Similarly for the $[\phi]$-iterates of $\Gamma^+$. Hence, either items $3$ and $4$ of Lemma \ref{lem3.4.2} hold, in which case suitable powers of $[\phi]$ and $[\psi]$ generate a free group, or $\Lambda^+=\Gamma^-$ and $\Lambda^-=\Gamma^+$. In the latter case, $[\psi]\in \mathrm{Stab}(\Lambda^-)$. Then $[\phi]\cdot\mathrm{ker}(\sigma)=x^k\in\mathbb{Z}$ and  $[\psi]\cdot\mathrm{ker}(\sigma)=x^l\in\mathbb{Z}$, so $[\phi]^l\cdot [\psi]^{-k} \in \mathrm{ker}(\sigma)$. From Theorem \ref{thm7.2} we have that $[\phi]^l\cdot [\psi]^{-k}$ virtually fixes a point of $\mathcal{O}$. 
\end{proof}
Lastly, using Remark \ref{gibcent}, we may construct examples of two iwip automorphisms that do not have iterates producing a free subgroup of rank two, and neither have two common powers (these powers differ by an element $[\alpha]$ as above). On the other hand, when we restrict our attention to finite free products we can extend the well known result of the free case (\autocite[Proposition~3.7]{Lam}).

\begin{corollary}\label{maincol}
 Let $[\phi],[\psi]$ be two iwip outer automorphisms of $G=G_1\ast\ldots\ast G_k\ast F_p$, where each factor $G_i$ is finite, that do not
have common powers. Then high powers of $[\phi]$ and $[\psi]$ freely generate $F_2$.    
\end{corollary}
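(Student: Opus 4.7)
The plan is to invoke the dichotomy of Theorem \ref{main2} and rule out the second alternative using the finiteness assumptions. Under the hypothesis that each $G_i$ is finite, two basic facts recorded in the proof of Corollary \ref{cor7.3} are key: the stabiliser of any $T \in \mathcal{O}$ is finite, and $\mathrm{Out}(G,\mathcal{G})$ is virtually torsion-free by \cite[Corollary~5.3]{Outspaceprod}.

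Fix a torsion-free subgroup $K \leq \mathrm{Out}(G,\mathcal{G})$ of finite index, and pick positive integers $a, b$ with $[\phi]^a,[\psi]^b \in K$. The elements $[\phi]^a, [\psi]^b$ remain iwip (since being iwip is closed under taking powers by definition), and they inherit the hypothesis of no common powers from $[\phi], [\psi]$. Applying Theorem \ref{main2} to the pair $[\phi]^a, [\psi]^b$, we fall into one of two cases. In the first, some $\langle [\phi]^{am}, [\psi]^{bn}\rangle$ is isomorphic to $F_2$; since powers of two free generators of $F_2$ continue to generate a rank-two free subgroup, sufficiently high powers of $[\phi]$ and $[\psi]$ also freely generate $F_2$, which is precisely the desired conclusion.

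The substantive step is to exclude the second alternative. Suppose for contradiction that it holds, so there exist $k, l \in \mathbb{N}$ and $[\alpha] \in \mathrm{Out}(G,\mathcal{G})$ with $[\alpha]^N$ fixing some $T \in \mathcal{O}$ for some $N \in \mathbb{N}$, and $([\phi]^a)^k = ([\psi]^b)^l [\alpha]$. Because $\mathrm{Stab}(T)$ is finite, $[\alpha]^N$ has finite order, and hence $[\alpha]$ itself is a torsion element. On the other hand, $[\alpha] = [\psi]^{-bl}[\phi]^{ak}$ lies in $K$, which is torsion-free, so $[\alpha] = 1$. This forces $[\phi]^{ak} = [\psi]^{bl}$, contradicting our hypothesis that $[\phi]$ and $[\psi]$ share no common powers.

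The main obstacle has already been surmounted by the machinery of Theorem \ref{main2} and, through it, of Theorem \ref{thm7.2}, which tie together the stabiliser of the lamination and the elliptic kernel of the stretching homomorphism. The present proof is essentially a clean corollary: the passage to the torsion-free finite-index subgroup $K$ is the only new ingredient, and it succeeds because the finiteness of the $G_i$'s simultaneously makes point stabilisers finite (so that the exceptional element $[\alpha]$ is torsion) and ensures $\mathrm{Out}(G,\mathcal{G})$ is virtually torsion-free (so that torsion in $K$ is impossible).
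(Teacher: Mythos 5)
Your handling of the second alternative of Theorem \ref{main2} is correct and cleanly organised: since each $G_i$ is finite, point stabilisers in $\mathcal{O}$ are finite and $\mathrm{Out}(G,\mathcal{G})$ is virtually torsion free, so after passing to a torsion-free finite-index subgroup $K$ and replacing $[\phi],[\psi]$ by powers lying in $K$, the element $[\alpha]=[\psi]^{-bl}[\phi]^{ak}$ is both torsion (a power of it lies in a finite point stabiliser) and contained in $K$, hence trivial, contradicting the absence of common powers. The genuine gap is in your final upgrade in the first alternative. Theorem \ref{main2} only asserts $\langle[\phi]^{am},[\psi]^{bn}\rangle\cong F_2$ for \emph{some} $m,n$, and from free generation by $x=[\phi]^{am}$, $y=[\psi]^{bn}$ you may conclude free generation only for the powers $x^s=[\phi]^{ams}$, $y^t=[\psi]^{bnt}$, i.e.\ for exponents that are multiples of $am$ and $bn$; nothing follows about, say, $[\phi]^{am+1}$ together with $[\psi]^{bn}$. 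The corollary, as the announced extension of \cite[Proposition~3.7]{Lam}, asserts that \emph{all sufficiently high} powers freely generate $F_2$ (this is exactly the form of the conclusion of Lemma \ref{lem3.4.2}: for all sufficiently large $N$), so your sentence ``sufficiently high powers of $[\phi]$ and $[\psi]$ also freely generate $F_2$'' is not justified by the observation that powers of free generators again generate a free group.

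To close the gap you should not use Theorem \ref{main2} as a black box, because its statement discards the quantifier strength that its proof actually provides. Rule out the degenerate case $\Lambda^+_{[\phi]}=\Lambda^-_{[\psi]}$ and $\Lambda^-_{[\phi]}=\Lambda^+_{[\psi]}$ exactly as you ruled out alternative (2): in that case a suitable product of powers of $[\phi]$ and $[\psi]$ lies in $\ker(\sigma)$, which is finite by Corollary \ref{cor7.3}, and your torsion-free-subgroup trick then produces a common power, a contradiction. In the remaining case, Proposition \ref{prp6.9} supplies the weak-attraction hypotheses of Lemma \ref{lem3.4.2}, which yields directly that $[\phi]^N$ and $[\psi]^N$ freely generate a rank-two subgroup for all sufficiently large $N$. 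This is in substance the paper's own proof, which consists of the single observation that $\ker(\sigma)$ is finite together with the argument of \cite[Corollary~2.15]{Lam}.
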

\begin{proof}
    By Corollary \ref{cor7.3} we have that $\mathrm{ker}(\sigma)$ is finite. The proof now is identical to {{\cite[Corollary 2.15]{Lam}}}.
\end{proof}

\printbibliography

@article{SykPap,
author = {I. Papavasileiou and M. Sykiotis},
title = {Translation lengths of outer automorphisms of finitely generated free-by-finite groups},
journal = {Communications in Algebra},
volume = {50},
number = {10},
pages = {4300-4305},
year  = {2022},
publisher = {Taylor & Francis},
doi = {10.1080/00927872.2022.2059081},
URL = {https://doi.org/10.1080/00927872.2022.2059081},
}

@article{Guirardel2017AlgebraicLF,
    author = "V. Guirardel and C. Horbez",
    title = "Algebraic laminations for free products and arational trees",
    journal = "Algebraic \& Geometric Topology",
    volume = "19",
   pages = "2283--2400",
    year = "2019",
 DOI = "10.2140/agt.2019.19.2283"
  %  keywords = "physics"
}

@article{Outspaceprod,
    author = " V. Guirardel and G. Levitt",
    title = "The outer space of a free product",
    journal = "Proceedings of the London Mathematical Society",
    volume = "94",
    number = "3",
    pages = "695--714",
    year = "2007",
   DOI= "https://doi.org/10.1112/plms/pdl026"
  %  keywords = "physics"
}

@misc{horbez2014tits,
      title={The Tits alternative for the automorphism group of a free product}, 
      author={C. Horbez},
      year={2014},
      eprint={1408.0546},
      archivePrefix={arXiv},
      primaryClass={math.GR}
}

@article{Feighn2006AbelianSO,
    author = "M. Feighn and M. Handel",
    title = "Abelian subgroups of $\mathrm{Out}(F_n)$",
   journal = "Geometry \& Topology",
    volume = "13",
    number = "3",
    pages = "1657--1727",
    year = "2009",
  DOI = "10.2140/gt.2009.13.1657"
  %  keywords = "physics"
}

@article{LustigConjugacy,
  title={Conjugacy and Centralizers for Iwip Automorphisms of Free Groups},
  author={M. Lustig},
  journal={Arzhantseva, G.N., Burillo, J., Bartholdi, L., Ventura, E. (eds) Geometric Group Theory. Trends in Mathematics. Birkhäuser Basel},
DOI = {https://doi.org/10.1007/978-3-7643-8412-8_11},
  year={2007}
}

@article{KapovichLustigStabilizers,
title = "Stabilizers of $\mathbb{R}$-trees with free isometric actions of $F_n$",
author = "I. Kapovich and M. Lustig",
year = "2011",
doi = "10.1515/JGT.2010.070",
volume = "14",
pages = "673--694",
journal = "Journal of Group Theory",
issn = "1433-5883",
number = "5"
}

@article{Tits1,
    author = "M. Bestvina and M. Feighn and M. Handel",
    title = "The Tits alternative for $\mathrm{Out}(F_n)$ I: Dynamics of exponentially-growing
automorphisms",
    journal = "Annals of Mathematics",
    volume = "151",
    number = "2",
    pages = "517 -- 623",
    year = "2000",
   doi= "https://doi.org/10.2307/121043"
  %  keywords = "physics"
}

@article{Francaviglia2021OnTA,
    author = "S. Francaviglia and A. Martino and D. Syrigos",
    title = "On the action of relatively irreducible automorphisms on their train tracks",
 eprint = "2108.01680",
      archivePrefix = "arXiv",
      primaryClass = "math.GR",
 year = "2021"
    %journal = "Annals of Mathematics",
    %volume = "151",
    %number = "2",
    %pages = "517 -- 623",
   %doi= "https://doi.org/10.2307/121043"
  %  keywords = "physics"
}

@article{Lam,
    author = "M. Bestvina and M. Feighn and M. Handel",
    title = "Laminations, trees, and irreducible automorphisms of free
groups.",
    journal = "Geometric and Functional Analysis",
    volume = "7",
    pages = "215--244",
    year = "1997",
    doi= "https://doi.org/10.1007/PL00001618"
  %  keywords = "physics"
}

@article{Tits2,
    author = "M. Bestvina and M. Feighn and M. Handel",
    title = "The Tits alternative for $Out(F_n)$ II: A Kolchin type theorem",
    journal = "Annals of Mathematics",
    volume = "161",
   number = "1",
    pages = "1--59",
    year = "2005",
    doi= "https://doi.org/10.4007/annals.2005.161.1"
  %  keywords = "physics"
}

@article{syrigos2016irreducible,
    author = "D. Syrigos",
    title = "Irreducible laminations for IWIP Automorphisms of free products and Centralisers",
    year={2016},
      eprint={1410.4667},
      archivePrefix={arXiv},
      primaryClass={math.GR}
}

@inbook{Goldstein,
    author = "R. Goldstein",
title="Bounded cancellation of automorphisms of free products",
bookTitle="Topology and Combinatorial Group Theory: Proceedings of the Fall Foliage Topology Seminars held in New Hampshire 1986--1988",
year="1990",
publisher="Springer Berlin Heidelberg",
address="Berlin, Heidelberg",
pages="81--89",
isbn="978-3-540-46296-5",
doi="10.1007/BFb0084453"
}

@article{Cooper,
    author = "D. Cooper",
    title = "Automorphisms of free groups have finitely generated fixed point sets",
   journal = "Journal of Algebra",
    volume = "111",
    number = "2",
    pages = "453--456",
    year = "1987",
  DOI = "https://doi.org/10.1016/0021-8693(87)90229-8"
  %  keywords = "physics"
}

@article{Ghosh,
    author = "P. Ghosh",
    title = "Applications of weak attraction theory in $\mathrm{Out}(F_n)$",
    journal = "Geometriae Dedicata",
    volume = "181",
    pages = "1--22",
    year = "2016",
    doi= "https://doi.org/10.1007/s10711-015-0109-1"
  %  keywords = "physics"
}

@article{LymanCTs,
    doi = {10.48550/ARXIV.2203.08868},
  url = {https://arxiv.org/abs/2203.08868},
  author = {Lyman, R. A.},
  title = {CTs for free products},
  publisher = {arXiv},
  year = {2022},
  copyright = {arXiv.org perpetual, non-exclusive license}
}

@article{BestvinaHandeltraintracks,
    author = "M. Bestvina and M. Handel",
    title = "Train Tracks and Automorphisms of Free Groups",
    journal = "Annals of Mathematics",
    volume = "135",
   number = "1",
    pages = "1--51",
    year = "1992",
    doi= "https://doi.org/10.2307/2946562"
  %  keywords = "physics"
}

@article{CollinsTurner,
    author = "D. J. Collins and  E. C. Turner",
    title = "Efficient representatives for automorphisms of free products",
    journal = "Michigan Math",
    volume = "41",
   number = "3",
    pages = "443--464",
    year = "1994",
    doi= "10.1307/mmj/1029005072"
  %  keywords = "physics"
}

@article{francaviglia2015stretching,
    author = "S. Francaviglia and A. Martino",
    title = "Stretching factors, metrics and train tracks for free products",
    journal = "Illinois Journal of Mathematics",
    volume = "59",
   number = "4",
    pages = "859--899",
    year = "2015",
    doi= "10.1215/ijm/1488186013"
  %  keywords = "physics"
}

@article{francaviglia2020minimally,
    author = "S. Francaviglia and A. Martino and D. Syrigos",
    title = "The minimally displaced set of an irreducible automorphism is locally finite",
   journal = "Glasnik Matematicki",
    volume = "55",
   number = "75",
    pages = "301--336",
    year = "2020",
    doi= "10.3336/gm.55.2.09"
  %  keywords = "physics"
}

@article{KapovichLustigPingpong,
title = "Ping-pong and outer space",
author = "I. Kapovich and M. Lustig",
year = "2010",
doi = "10.1142/S1793525310000318",
volume = "2",
pages = "173--201",
journal = "Journal of Topology and Analysis",
issn = "1793-5253",
number = "2",
}

@article{ClayPettet,
author = {M. Clay and A. Pettet},
year = {2010},
title = {Twisting Out Fully Irreducible Automorphisms},
volume = {20},
pages = {657--689},
journal = {Geometric and Functional Analysis},
doi = {10.1007/s00039-010-0072-9}
}

@article{Gultepe,
author = {F. G{\"u}ltepe},
title = {{Fully irreducible automorphisms of the free group via Dehn twisting in $\sharp_k(S^2 \times S^1)$}},
volume = {17},
journal = {Algebraic \& Geometric Topology},
number = {3},
publisher = {MSP},
pages = {1375--1405},
year = {2017},
doi = {10.2140/agt.2017.17.1375}
}

@article{SykStableRepresentatives,
author = {M. Sykiotis},
year = {2004},
pages = {2405--2441},
title = {Stable representatives for symmetric automorphisms of groups and the general form of the Scott conjecture},
volume = {356},
number={6},
journal = {Transactions of the American Mathematical Society},
doi = {10.2307/3844953}
}

@article{SykFixedSubgroups,
    author = "M. Sykiotis",
    title = "Fixed Subgroups of Endomorphisms of Free Products",
    journal = "Journal of Algebra",
    volume = "315",
   number = "1",
    pages = "274--278",
    year = "2007",
url = {https://www.sciencedirect.com/science/article/pii/S0021869307002888},
   doi= "https://doi.org/10.1016/j.jalgebra.2007.04.022"
  %  keywords = "physics"
}

@article{loxodromic,
author = {Gupta, R.},
year = {2018},
pages = {91--121},
title = {Loxodromic elements for the relative free factor complex},
volume = {196},
journal = {Geometriae Dedicata},
doi = {10.1007/s10711-017-0310-5}
}

\end{document}